\numberwithin{equation}{section}
\newtheorem{thm}{Theorem} 
\newtheorem{lemma}{Lemma}[section]
\newtheorem{lem}{Lemma}
\newtheorem{cor}[thm]{Corollary}
\newtheorem{prop}[thm]{Proposition} 
\theoremstyle{definition}
\def\bye{\end{document}} \def\by{\end{proof}\end{document}}
\def\R{{\mathbb R}}
\def\N{{\mathbb N}}
\DeclareMathOperator{\tr}{tr}
\newcommand{\lbar}[1]{\mkern 1.9mu\overline{\mkern-1.9mu#1\mkern-0.1mu}
\mkern 0.1mu}
\def\mid{\,:\,}
\def\dist{\mathrm{dist}}
\def\USC{\operatorname{USC}}
\def\LSC{\operatorname{LSC}}
\DeclareMathOperator{\argmin}{arg\, min}
\DeclareMathOperator{\e}{\mathrm{e}}
\def\1{\mathbf{1}}
\def\cH{\mathcal{H}}
\def\Lip{\operatorname{Lip}}
\def\gth{\theta}
\def\beq{\begin{equation}}
\def\eeq{\end{equation}}
\def\bproof{\begin{proof}}
\def\eproof{\end{proof}}
\def\bcases{\begin{cases}}
\def\ecases{\end{cases}}
\def\bprop{\begin{prop}}
\def\eprop{\end{prop}}
\def\blem{\begin{lem}}
\def\elem{\end{lem}}
\def\bthm{\begin{thm}}
\def\ethm{\end{thm}}
\def\lab{\label}
\def\fr{\frac}
\def\sep{\sqrt\ep}
\def\bpr{\bproof}
\def\epr{\eproof}
\def\disp{\displaystyle}
\def\wcr{\\[3pt]}
\def\cV{\mathcal{V}}
\date{\today}
\def\bbS{\mathbb{S}}  
\def\ep{\varepsilon}
\def\gO{\varOmega}
\def\gG{\varGamma}
\def\tim{\times}
\def\ga{\alpha}
\def\gS{\varSigma}
\def\gd{\delta}
\def\gl{\lambda}
\def\pl{\partial}
\def\gb{\beta}
\def\pbr{\pl_{\mathrm{p}}}
\def\Pi{\varPi}
\author{Hitoshi Ishii$^{1,*}$ and Panagiotis E. Souganidis${}^{2}$}
\thanks{${}^*$ Corresponding author}
\thanks{${}^{1}$ Faculty of Education and Integrated Arts and Sciences, Waseda University, Nishi-Waseda, Shinjuku, Tokyo 169-8050, Japan. 
Partially supported by grants, the JSPS, KAKENHI \#26220702,
\#23340028 and \#23244015} 
\thanks{${}^{2}$ Department of Mathematics, The University of Chicago, 5734 S. University Avenue, Chicago, IL 60657, USA. 
Partially supported by the National Science Foundation grant DMS-1266383} 
\keywords{parabolic equation, asymptotic behavior, metastability, stochastic perturbation}
\subjclass[2010]{Primary 35B40; Secondary 35K20, 37H99}
\email{hitoshi.ishii@waseda.jp (Hitoshi Ishii),
souganidis@math.uchicago.edu (Panagiotis E. Souganidis)}
\date{\today}
\def\bbS{\mathbb{S}}   
\title[Metastability for parabolic equations with drift]{Metastability for parabolic equations with drift: \\
part II. The quasilinear case}
\begin{document}


\def\gbr{g_{\mathrm{b}}} 
\def\blue#1{\textcolor{blue}{#1}}

\begin{abstract}This is the second part of our series of papers on metastability 
results for parabolic equations with drift. 
The aim is to present a self contained study, using partial differential equations methods, of the metastability properties of quasi-linear 
parabolic equations with a drift and to obtain results similar to those in 
Freidlin and Koralov 
\cites{FK2010,FK2012a}. 
\end{abstract}

\maketitle

\renewcommand{\abstractname}{Notation} 
 
\begin{abstract} 
We work in $\R^n$ and write $\bbS^n$ for the space of real $n\tim n$ symmetric matrices. For any  
$\gth\in(0,\,1]$, $\bbS^n(\gth)$ denotes the subset of all   
$a\in\bbS^n$ satisfying $\gth I\leq a\leq \gth^{-1}I$, where 
$I$ is the $n\tim n$ identity matrix. If  $a\in\bbS^n$, then $\tr a$ denotes its trace,  
and, for  
 $a,b \in\bbS^n$, $a\leq b$ if and only if $b-a$ is a 
nonnegative matrix. Given  $p \in \R^n$, let $ p\otimes p$ denote the 
symmetric matrix $(p_ip_j)_{1\leq i,j\leq n}$. 
If $U$ is a subset of $\R^k$ for some $k\in \N$, then $C(U;\bbS^n(\gth))$ is the set of  $\bbS^n(\theta)$-valued continuous maps from $U$ into $\bbS^n$. For $a\in \bbS^n$ and $p\in \R^n$, $ap\cdot p:= \Sigma_{i,j=1}^n a_{ij}p_jp_i.$
If $r_1,\,r_2\in\R$, then $r_1\wedge r_2:=\min\{r_1,r_2\}$ and $r_1\vee r_2:=\max\{r_1,r_2\}$ and, for $r\in\R$, $r_+=r\vee 0$ and $r_-=(-r)\vee 0$. 
We use the convention  $\inf\emptyset=\infty$ and $\sup\emptyset=-\infty$. 
The  open ball in $\R^n$ with radius $R>0$ and center at $x\in\R^n$ is  $B_R(x)$,
and $B_R:=B_R(0)$.   
Given $\gO \subset \R^n$ and $\gd>0$, we write 
$\gO_\gd:=\{x\in\gO: \text{dist}(x,\partial \gO)\geq \delta\}$,  
and, for $T>0$, 
$Q_T:=\varOmega \times(0,T)$; 
if $T=\infty$, then 
we write $Q$ instead of $Q_\infty$. The parabolic boundary 
of $Q_T$ is $\pbr Q_T:=(\lbar\gO\tim\{0\}) \cup(\pl \gO\tim(0,\,T)).$
We denote by 
$\Lip(A,\R^k)$ the set of the $\R^k$-valued 
Lipschitz continuous functions defined in 
$A\subset \R^k$; when $k=1$, we often write 
$\Lip(A).$
We write $\USC(A)$ and $\LSC(A)$ for the set of, respectively, upper and lower semicontinuous functions defined on $A$,
and, when $A \subset \R^n\times [0,\infty)$ is open, 
$C^{2,1}(A)$ is the space of functions which are continuously differentiable twice with respect to the space variables and once with respect to the time variable. 
Given a bounded family of functions
$f_\delta:A\to \R$,  
${\limsup}^\star_{\gd\to 0} f_\delta (x):=  
\lim_{r\to 0}\sup\{  f_\delta(x+y): x+y \in A,  |y|+\delta \leq r\}$ 
and  ${{\liminf}_{\star}}_{\gd\to 0} f_\delta (x):= 
 \lim_{r\to 0}\inf\{  f_\delta (x+y): x+y \in A,  |y|+\delta \leq r \}.$ 
If $A$ is a closed subset of $\R^n$ and  $f: A \to \R$, $\argmin(f |A):=\{x\in A: f(x)=\min_{y \in A} f(y)\}.$
We use $C$ to denote constants, which may change from line to line. When we want to display the dependence of a constant $C$ on a parameter $a$, we write $C=C(a)$, and, 
for $a,b \in \R$, $a\approx b$ means that $a$ and $b$ are close to each other in a controlled way. Finally to simplify the notation we write $\{a_k\}$  to denote the sequence $\{a_k\}_{k\in \N}$.
\end{abstract}

\section{Introduction} 

 This is the second part of our series of papers on metastability 
results for parabolic equations with drift. 
The aim is to present a self contained study, using partial differential equations 
(pde for short) methods, of the metastability properties of quasi-linear 
parabolic equations with a drift and to obtain results similar to those in 
Freidlin and Koralov \cites{FK2010,FK2012a}. 
\smallskip

 More precisely we are interested in the asymptotic behavior, as $\ep\to 0$ and $t\to \infty$, of the solution $u^\ep=u^\ep(x,t)$ of the 
initial-boundary value problem 
\beq\lab{eq:ql-pde}
u^\ep_t=\ep\tr[a(x,u^\ep)D^2u^\ep]+b(x)\cdot Du^\ep 
\ \text{ in  }  \ Q,
\eeq
and 
\beq\lab{eq:ibv-g}
u^\ep=g \ \text{ on } \ \pbr Q, 
\eeq
where 
\begin{equation}\lab{omega}
\varOmega \text{ is a bounded $C^1$-domain with outward  normal vector $\nu$}
\end{equation}
and
\begin{equation}\label{g}
g\in C(\lbar \gO).
\end{equation}

 Throughout the paper we assume that, for some  $\gth_0\in(0,\,1]$,  
\beq\lab{ellipticity}
a\in C(\lbar\gO\tim\R;\bbS^n(\theta_0)), 
\eeq
and  
\begin{equation}\label{blip} 
b\in\Lip(\R^n;\R^n)  \ \text{ with } \ b(0)=0
\end{equation}
is such that 
\beq\lab{g-astable}
\text{\begin{minipage}[adjusting]{0.7\textwidth}
the origin is a (unique) globally asymptotically stable point
 of the dynamical system $\dot X=b(X)$ generated by $b$.
\end{minipage}} 
\eeq 

This last assumption is further quantified by the additional requirements that $b$ points inward at the boundary points of $\gO$, that is, 
\beq\lab{b-inward}
b\cdot \nu<0  \ \text{ on } \ \pl\gO, 
\eeq
and there exist $b_0>0$ and $r_0>0$ such that $\lbar B_{r_0}\subset\gO$, and 
\beq\lab{borigin}
b(x)\cdot x\leq -b_0|x|^2 \ \  \text{ for all } \ \  x\in B_{r_0}.
\eeq

 For later use we summarize all the above assumptions 
in the  list
\beq\label{B}
\text{\eqref{omega}, \eqref{g}, \eqref{ellipticity}, \eqref{blip}, \eqref{g-astable}, \eqref{b-inward}  and \eqref{borigin}.}
\eeq

 The asymptotic behavior of the $u^\ep$'s  is described in Theorem \ref{thm:FK}. Our arguments  are based entirely  
on pde methods and  the main tools are the comparison principle 
and the construction of two kinds of barrier functions 
for parabolic equations. The later was   
the main subject of our previous paper 
\cite{IS2015}.    
\smallskip

 We work with either classical or viscosity solutions depending on the context and
 most of the times we say solution without making a distinction.  When we write inequalities for viscosity sub- or super-solutions, we use the $\leq$ and $\geq$ signs for a sub- and super-solutions respectively.  Finally, we will always work with $\ep \in (0,1)$ and we will not repeat this.

\smallskip

 An important tool  is the quasi-potential $V^c$ associated, for each $c\in \R$,  with
$(a(\cdot,c),b)$, which is characterized by the property  
$$V^c  \ \text{is the maximal subsolution of  $H^c(x,Du)=0$ in $\gO$ 
and $u(0)=0$},$$
where the Hamiltonian $H^c\in C(\lbar\gO\tim\R^n)$ 
is given by $$H^c(x,p)
:=a(x,c)p\cdot p+b(x)\cdot p.$$ 
\smallskip

The quasi-potential $V^c$ is also the  unique (viscosity) solution 
$u\in\Lip(\lbar\gO)$ of the state-constraints problem for the Hamilton-Jacobi 
equation $H(x,Du)=0$ in $\gO$, with the  additional condition that $u(0)=0$.  
(See Lemma~\ref{lem:A-comparison} 
in Appendix~C for the uniqueness of this state-constraints problem, and  
also Soner \cite{So1986}, Fleming and Soner \cite{FS2006} and  Ishii \cite{Is1989} for some related aspects.)

 Next we introduce some terminology 
and introduce some additional notation and hypotheses similar to those in 
\cites{FK2010, FK2012a}.

Consider the map  $M : \R\to \R$ given  by
\beq \lab{FK8}
M(c):= \min_{\pl\gO}V^{c}.
\eeq

The continuity of $a$ and the stability properties of viscosity solutions yield that the functions $ \R  \owns c \mapsto M(c)$ and 
$\lbar \varOmega \times  \R \owns (x,c) \mapsto V^c(x)\in \R$ are continuous.
The continuity of the latter is an easy 
consequence of the uniqueness of the state constraints problem.
\smallskip

Given $g \in C(\lbar Q)$, we set
\[
c_0:=g(0), \quad g_{\min}: = \min_{\lbar\gO}g, \quad
g_{\max}: = \max_{\lbar\gO}g, \quad
g_1: = \min_{\pl\gO}g, \quad 
g_2: = \max_{\pl\gO}g, 
\]
and note that $[g_1,\, g_2]\subset [g_{\min}, \,g_{\max}]$. 
Henceforth we write 
$$I_g:=[g_{\min}, \,g_{\max}],$$  
and we introduce the multi-valued map   
 $G\mid I_g\to 2^{\R}$ by
\[
G(c):=\{g(x)\mid x\in\argmin(V^c|\pl\gO)\}. 
\]

It is immediate that $G(c)\subset I_g$ for all $c\in I_g$. Moreover, since $(c,x)\mapsto V^c(x)$ and $g$ are continuous on $\R\tim\pl\gO$ 
and $\pl\gO$ respectively, it is easily checked that 
$G$ is upper semicontinuous on $I_g$ and, hence, 
$G(c)$ is compact for all $c\in I_g$. 

Next we define the functions $G^\pm\mid I_g\to I_g$ by
\[
G^+(c):=\max G(c) \ \ \ \text{ and } \ \  \ G^-(c):=\min G(c).
\] 
and note that they are respectively
upper and lower 
semicontinuous, and, moreover,  
\[
G^+(c)=\max_{\argmin(V^c|\pl\gO)}g \ \ \text{ and } \ \ G^-(c)=\min_{\argmin(V^c|\pl\gO)} g.   
\]

Following  \cites{FK2010, FK2012a}, we assume that 
\beq\label{FK13}
G^+(c_0)=G^-(c_0), 
\eeq
and set 
$$g_0:=G^+(c_0)=G^-(c_0).$$
  
This assumption means that the set $G(c_0)$ is a singleton, that is, 
$$g(x)= g_0 \ \ \text{ for all  } x\in \argmin(V^{c_0}|\pl\gO).$$ 

Next we define $c_1$ as follows:
\beq
\left\{
\text{
\begin{minipage}{0.7\textwidth}
if $g_0\geq  c_0$, then $c_1:= \inf\{c\in [c_0,\infty)\mid G^-(c)\leq c\}$, and,\\[2mm] 
if $g_0\leq c_0$, then  $c_1:= \sup\{c\in (-\infty,\,c_0]\mid
G^+(c)\geq c\}.$
\end{minipage}
}
\right.
\eeq

Note that, since $G(I_g):=\bigcup_{c\in I_g}G(c) \subset[g_1, g_2]$, we always have
$c_1 \in[g_1,\, g_2]$ and observe that 
\beq\label{FK12'}
\left\{
\text{
\begin{minipage}{0.7\textwidth}
if $c_1>c_0$, then $\,G^-(c)>c \,$ for all $\, c\in[c_0,\,c_1)$, \\[3pt]
if $c_1<c_0$, then $\,G^+(c)<c \,$  for all $\,c\in(c_1,\,c_0]$.
\end{minipage}
}
\right.
\eeq

We assume that the graph of
$G$ crosses the diagonal from the left to the right at $c_1$, that is   
\beq\lab{FK12}
\begin{cases}
\text{for all $\gd_0 > 0$, there exists $\gd\in (0,\,\gd_0]$ such that}\\[1mm]
\qquad \text{if $c_0\geq c_1 > g_{\min}$,  then $G^-(c_1-\gd) > c_1-\gd,$} \\[1mm]
\qquad \text{if $c_0\leq c_1 < g_{\max}$,  then $G^+(c_1 + \gd) < c_1 + \gd,$}
\end{cases}
\eeq
and we define the function $\bar c\mid (0,\,\infty)\to I_g$ 
as follows:
For each  $\gl\in(0,\,\infty)$, 
\beq\label{takis}
\bar c(\gl):=\bcases
c_0  \ \text{ if either} \ \gl<M(c_0) \ \text{ or } \ c_1=c_0,\\[.75mm]
\min(c_1,\inf\{c\in[c_0,c_1]:M(c)=\gl\})  \ \text{ if } \ 
\gl\geq M(c_0) \  \text{ and } \ c_1>c_0,
\\[.75mm]
\max(c_1,\sup\{c\in[c_1,\,c_0]:M(c)=\gl\}) \ 
\text{ if } \ \gl\geq M(c_0) \ \text{ and }  \ c_1<c_0.
\ecases
\eeq
\smallskip 

 For later use we summarize the above assumptions 
in the  list
\beq\label{G}
\text{\eqref{FK13} and \eqref{FK12}.}
\eeq

Since the definition of $\bar c(\gl)$ is cumbersome, for clarity and to compare with the linear problem, we discuss
what happens when $a(x,c)$ is independent of $c$. 
In this case the quasi-potential $V$ and, hence,
its minimum value $M=\min_{\pl\gO}V$ do not depend on $c$, and 
the multi-valued map $G$ is a constant. 
Assumption \eqref{FK13} then states that 
$g_0=\min_{\argmin(V|\pl\gO)}g=\max_{\argmin(V|\pl\gO)}g$ and 
$G(c)=\{g_0\}$ and $G^-(c)=G^+(c)=g_0$ for all $c\in I_g$.
It is easily checked that, if $g(0)={g_0}$, 
then $\bar c(\gl)=g(0)={g_0}$ for all $\gl>0,$ and, if either $g(0)<{g_0}$ or $g(0)>{g_0}$, 
\[\bar c(\gl)=
\bcases
c_0&\text{ if }\ \gl\leq M, \\[.5mm]
c_1&\text{ if }\ \gl>M,
\ecases
\]     
while, 
if $g(0)\not={g_0}$, then $\bar c(\gl)$ is discontinuous at $\gl=M$.
\smallskip 

 The main result, which is similar to  
\citelist{\cite{FK2010}*{Theorem 3.1}\cite{FK2012a}}, is:

\begin{thm}\lab{thm:FK}
Assume  \eqref{B} and \eqref{G} 
and let  $\lambda >0$ be a point of continuity of $\bar c$. If, for $\ep\in(0,\,1),$ 
$u^\ep\in C(\lbar Q)\cap C^{2,1}(Q)$ is  a solution 
of \eqref{eq:ql-pde} and \eqref{eq:ibv-g}, then, for each $\delta >0$ so that $\varOmega_\delta \neq \emptyset$,   
\[
\lim_{\ep\to 0}u^\ep(\cdot,\exp(\gl/\ep))=\bar c(\gl)
\  \text{ uniformly in } \  \gO_\gd. 
\]
\end{thm}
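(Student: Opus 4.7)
I would sandwich $u^\ep(\cdot,\exp(\lambda/\ep))$ on $\gO_\gd$ between upper and lower barriers built from the quasi-potentials $V^c$, iterate along the graph of the multi-valued map $G$ starting from $c_0=g(0)$, and identify the common limit with $\bar c(\lambda)$ via its definition \eqref{takis}. Concretely I would pass to the upper and lower half-relaxed limits of $u^\ep(\cdot,\exp(\lambda/\ep))$ as $\ep\to 0$ and show they coincide with $\bar c(\lambda)$ on $\gO_\gd$ at continuity points of $\bar c$, using the comparison principle. The proof alternates between two regimes at each level $c$: a \emph{plateau} where $\lambda<M(c)$ and the solution stays $\eta$-close to $c$, and an \emph{escape} where $\lambda>M(c)$ and the solution is pulled toward $G^\pm(c)$. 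This adapts the Freidlin--Koralov iteration to the pde setting via the barrier technology of \cite{IS2015}.

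\textbf{Settling, plateau, escape.} The first step is a settling lemma: using the inward drift \eqref{b-inward} and the exponential attraction at the origin \eqref{borigin}, I would build deterministic characteristic-based barriers showing that $|u^\ep(x,t)-c_0|$ is small on $\gO_\gd$ for $t$ in an interval of the form $[\tau_0,\exp((M(c_0)-\eta)/\ep)]$, once $\ep$ is small. The plateau step at a general level $c$ then proceeds by freezing $a(x,c)$ in the equation and applying the quasi-potential barriers of \cite{IS2015} to the resulting linear problem: these supply sub- and supersolutions of the frozen equation whose values on $\gO_\gd$ are pinned within $\eta$ of $c$ for $t\leq\exp((M(c)-\eta)/\ep)$. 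For the escape, once $\lambda$ exceeds $M(c)+\eta$, the quasi-potential barriers force the solution of the frozen problem to pick up the boundary data of $g$ at $\argmin(V^c|\pl\gO)$; by \eqref{FK13} this boundary trace is the singleton $\{G^\pm(c)\}$ at the relevant levels, so $u^\ep$ is pushed toward that value after time $\exp((M(c)+\eta)/\ep)$.

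\textbf{Iteration and identification with $\bar c(\lambda)$.} Iterating plateau/escape starting from $c_0$, the solution visits a sequence of levels along the graph of $G$. By \eqref{FK12'} the iteration is strictly monotone (increasing toward $c_1$ if $c_0\leq c_1$, decreasing if $c_0\geq c_1$), and by \eqref{FK12} it does not overshoot $c_1$. Matching with \eqref{takis}: for $\lambda<M(c_0)$ we remain in the first plateau and obtain $\bar c(\lambda)=c_0$; for $\lambda\geq M(c_0)$ the solution passes through all levels $c$ with $M(c)<\lambda$ and equilibrates at the first $c\in[c_0,c_1]$ (or $[c_1,c_0]$) with $M(c)=\lambda$, capped at $c_1$, which is exactly the formula in \eqref{takis}. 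The continuity of $V^c$ in $c$ noted after \eqref{FK8} is essential in chaining together successive plateaus, and the restriction that $\lambda$ be a continuity point of $\bar c$ ensures that the upper and lower half-relaxed limits pinch to the same value rather than landing on opposite sides of a jump.

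\textbf{Main obstacle.} The principal difficulty is the quasilinear freezing: the plateau barrier at level $c$ is built for $a(\cdot,c)$ but is applied to the equation with coefficient $a(\cdot,u^\ep)$, so one needs $|u^\ep-c|\leq\eta$ throughout the time interval in question, while $|u^\ep-c|\leq\eta$ is itself the conclusion of the barrier argument. I would resolve this by a continuation/bootstrap in $t$: start from the settling bound $|u^\ep-c_0|\leq\eta/2$ at a suitable initial time, then extend the closeness forward by comparison with the frozen barrier, using the uniform continuity of $a$ in its second argument to absorb the perturbation $a(x,u^\ep)-a(x,c)=O(\omega_a(\eta))$ into an enlarged barrier with slack $\eta$ rather than $\eta/2$. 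A secondary technical issue is gluing barriers across the transition $\lambda=M(c)$ where plateau and escape behaviors meet, which is exactly where the continuity-point restriction on $\lambda$ is used.
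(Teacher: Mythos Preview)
Your outline captures the right intuition (plateau while $\lambda<M(c)$, escape when $\lambda>M(c)$, bootstrap to handle the quasilinear coefficient), but the escape step as you describe it does not close, and the paper's argument is organized quite differently.

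\textbf{The gap.} In your escape step you write that after time $\exp((M(c)+\eta)/\ep)$ the solution is ``pushed toward'' the boundary value at $\argmin(V^c|\pl\gO)$, and you invoke \eqref{FK13} to say this is a single number. But \eqref{FK13} is assumed only at $c=c_0$; at a generic level $c\in(c_0,c_1)$ the set $G(c)$ need not be a singleton, so all the barrier can give you after escape is that $u^\ep$ lands in the interval $[G^-(c),G^+(c)]$. Consequently there is no well-defined ``next level'' to iterate to, and the picture of the solution ``visiting a sequence of levels along the graph of $G$'' (a discrete orbit $c_0\mapsto G(c_0)\mapsto\cdots$) is not available. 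What actually matters is only the \emph{sign} of $G^\pm(c)-c$: for $c\in[c_0,c_1)$ one has $G^-(c)>c$ by \eqref{FK12'}, which rules out downward motion, and this is the mechanism the paper exploits.

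\textbf{How the paper proceeds.} The paper first reduces the uniform statement on $\gO_\gd$ to the single assertion $\lim_{\ep\to0}u^\ep(0,\exp(\gl/\ep))=\bar c(\gl)$ via the asymptotic constancy Theorem~\ref{thm:conclusion3}, which gives $|u^\ep(x,t)-u^\ep(0,t)|<\gd$ on $\gO_\gd$ for all $t\geq C|\log\ep|$. This replaces your half-relaxed-limit / spatial-barrier machinery entirely. The analysis of $u^\ep(0,\exp(\cdot/\ep))$ is then done not by a direct plateau/escape construction but by three contradiction lemmas (Propositions~\ref{thm:A}, \ref{thm:B}, \ref{thm:C}): (A) a jump from $\gb_1$ to $\gb_2$ in log-time forces $\limsup\gl_k\geq M(\gb_2)$; (B) arriving at $\gb_2$ from below forces $G^+(\gb_2)\geq\gb_2$ (and symmetrically); (C) lingering near $\gb_0$ with $G^-(\gb_0)>\gb_0$ forces the log-time to be $\leq M(\gb_0)$. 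These are combined in Lemma~\ref{lem:main'} by showing that $\hat c(\rho):=\liminf_{\ep\to0}u^\ep(0,\exp(\rho/\ep))$ is nondecreasing on $(0,\gl]$ (using (B) contrapositively) and then, picking a continuity point $\rho_0$ of $\hat c$ with $\rho_0>M(\hat c(\rho_0))$, deriving a contradiction with (C). The quasilinear freezing difficulty you identify is handled inside the proofs of (A)--(C) by the perturbed Hamiltonians $\cH_\gd^\pm$ of Section~\ref{sec:quasi-p} together with Proposition~\ref{thm:quasi2-1}, rather than by a continuation argument in~$t$.

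In short: your bootstrap idea for the freezing is sound, but your escape/iteration mechanism needs to be replaced by one-sided obstructions of the type in Propositions~\ref{thm:A}--\ref{thm:C}, since at generic levels $G(c)$ is a set, not a point.
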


In view of the previous discussion,  
when $a(x,c)$ is independent of $c$, that is for linear equations, 
Theorem \ref{thm:FK} is the same as  
\cite{IS2015}*{Theorem 1}, except 
if $g(0)=g_0$. In this case, 
\cite{IS2015}*{Theorem 1} asserts, in addition, the uniform convergence of 
$u^\ep(\cdot,\exp(\gl/\ep))$ on any compact subset of $\gO\cup\argmin(V|\pl\gO)$.     
\smallskip

As in \cites{FK2010, FK2012a}, 
to prove Theorem~\ref{thm:FK} we need to show the following 
three propositions, which were proved in \cite{FK2012a}  using  large deviation results from \cite{FW2012}.
The first two 
together state \cite{FK2012a}*{Lemma~3.11}, while the third  
is an observation which is very crucial for the proof of Lemma 
\ref{lem:main'} (see \cite{FK2012a}*{Lemma~3.12}).

\begin{prop}\lab{thm:A} Assume  \eqref{B} 
and  
let $u^\ep\in C(\lbar Q)\cap C^{2,1}(Q)$ be a solution of \eqref{eq:ql-pde}. Assume furthermore  that  
the $u^\ep$'s are bounded on $Q$  uniformly on $\ep$ and 
suppose that there exist sequences $\{\mu_k\},\, \{\gl_k\} \subset (0,\,\infty)$ and $\{\ep_k\}\subset(0,\,1)$ and constants $0<a_1<a_2$ and $\beta_1, \beta_2\in \R$  such that  
$\lim_{k\to\infty}\ep_k=0$, and, for all $k\in\N$, 
\[0<a_1\leq \mu_k <\gl_k\leq a_2, \ \ 
u^{\ep_k}(0,\exp(\mu_k/\ep_k))=\gb_1 \ \text{ and } \ \  
u^{\ep_k}(0,\exp(\gl_k/\ep_k))=\gb_2.
\] 
If $\gb_1\not=\gb_2$, then 
$\,\limsup_{k\to \infty}\gl_k \geq M(\gb_2).$
\end{prop}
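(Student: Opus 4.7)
The approach is proof by contradiction, reducing to the construction of a supersolution barrier and applying the parabolic comparison principle. Suppose $\limsup_k \gl_k < M(\gb_2)$; passing to a subsequence, I may assume $\gl_k \to \gl^*$ and fix $\ga > 0$ with $\gl^* + 3\ga < M(\gb_2)$. Without loss of generality $\gb_2 > \gb_1$ (the dual case uses a subsolution built from $V^{\gb_2}$ with obvious sign changes). The continuity of $c \mapsto V^c$ asserted in the introduction gives continuity of $M$, so one can choose $\eta \in (0, (\gb_2 - \gb_1)/2)$ satisfying $M(c) > \gl^* + 2\ga$ for all $c$ with $|c - \gb_2| \leq 2\eta$.

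The central task is to construct, for each sufficiently large $k$, a viscosity supersolution $w^{\ep_k}$ of \eqref{eq:ql-pde} on $\lbar{\gO}\tim[0, t_k^2]$ (with $t_k^2 := \exp(\gl_k/\ep_k)$) that satisfies $w^{\ep_k} \geq \|g\|_\infty \geq u^{\ep_k}$ on the parabolic boundary and $w^{\ep_k}(0, t_k^2) \leq \gb_2 - \eta/2$. Parabolic comparison then forces $u^{\ep_k}(0, t_k^2) \leq \gb_2 - \eta/2 < \gb_2$, contradicting the hypothesis. Heuristically, $w^{\ep_k}$ is a Freidlin--Wentzell-type function that decreases from $\|g\|_\infty$ near $\pl\gO$ down to $\approx \gb_2 - \eta$ at the origin across a moving front $\{V_\gs(x) \approx \ep_k \log t\}$, where $V_\gs$ is a smoothed strict subsolution of $H^{\gb_2}(x, DV_\gs) \leq -\gs < 0$ obtained by inf-convolution of $V := V^{\gb_2}$, with $V_\gs(0)\leq 0$ and $V_\gs \geq M(\gb_2) - \ga$ on $\pl\gO$. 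The strict inequality $M(\gb_2) > \gl^* + 2\ga$ guarantees that the front does not reach the origin by time $t_k^2$, so $w^{\ep_k}(0,t_k^2)$ is close to $\gb_2 - \eta$, while the front sits outside $\gO$ at all $t \leq t_k^2$, keeping $w^{\ep_k} \geq \|g\|_\infty$ on $\pl\gO$.

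The main obstacle is the quasilinearity of \eqref{eq:ql-pde}: the diffusion $a(x, u)$ depends on $u$, so the natural ansatz is a supersolution only of the \emph{frozen} equation at parameter $\gb_2$. A direct computation in the transition region shows the quasilinear supersolution inequality reduces to
\[
a(x, w^{\ep_k}) DV_\gs \cdot DV_\gs + b \cdot DV_\gs \leq -\ep_k \tr\bigl(a(x, w^{\ep_k}) D^2V_\gs\bigr) + o(1),
\]
and the left-hand side is controlled by the strict-subsolution estimate $H^{\gb_2}(x, DV_\gs) \leq -\gs$ only where $w^{\ep_k}$ lies in a neighborhood of $\gb_2$. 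The crux is therefore to engineer the profile of $w^{\ep_k}$ so that the transition from the large boundary value $\|g\|_\infty$ down to $\gb_2 - \eta$ occurs almost entirely in a thin layer adjacent to $\pl\gO$, with $w^{\ep_k}$ remaining within an $\eta$-neighborhood of $\gb_2$ throughout the bulk of $\gO$ during the time window $[0, t_k^2]$. This delicate construction, together with the viscosity-sense gluing of the different branches of $w^{\ep_k}$ and the approximation $V\mapsto V_\gs$, is precisely the content of the barrier calculus developed in \cite{IS2015}, to which I defer.
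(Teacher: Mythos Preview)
Your proposal has a genuine gap: you never invoke the asymptotic constancy result (Theorem~\ref{thm:conclusion3}), and without it the quasilinear obstacle you correctly identify cannot be overcome. You try to build a supersolution $w^{\ep_k}$ of the \emph{quasilinear} equation on all of $\lbar\gO\tim[0,t_k^2]$, dominating $\|g\|_\infty$ on the parabolic boundary; but then $w^{\ep_k}$ necessarily takes values far from $\gb_2$ (near $\|g\|_\infty$ in the boundary layer, and also at $t=0$ throughout $\gO$), so $a(x,w^{\ep_k})$ is uncontrolled there and the strict subsolution inequality $H^{\gb_2}(x,DV_\gs)\le -\gs$ is useless. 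Confining the transition to a thin layer does not help, since the supersolution inequality must still hold pointwise in that layer. Deferring to \cite{IS2015} is not legitimate either: that paper treats only the linear case $a=a(x)$ and contains no quasilinear barrier calculus.

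The paper's route avoids building a quasilinear supersolution altogether. It first reselects $\mu_k,\gl_k,\gb_1$ so that $\gb_2-\gd/2<\gb_1\le u^{\ep_k}(0,t)\le\gb_2$ on $[\exp(\mu_k/\ep_k),\exp(\gl_k/\ep_k)]$, then applies Theorem~\ref{thm:conclusion3} to deduce $|u^{\ep_k}(x,t)-\gb_2|<\gd$ on $\gO_\gamma$ throughout that time window. This pins the \emph{solution's} values near $\gb_2$, giving $a(x,u^{\ep_k}(x,t))\le \ga_\gd^+(x)$ for a fixed matrix $\ga_\gd^+$, and thereby turns $u^{\ep_k}$ into a subsolution of a \emph{linear} equation with coefficient $\ga_\gd^+$. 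One then applies the linear barrier of Proposition~\ref{thm:short} (with $\ga=\ga_\gd^+$), starting not at $t=0$ but at $t=\exp(\mu_k/\ep_k)$, where asymptotic constancy also furnishes the required initial bound $u^{\ep_k}(\cdot,\exp(\mu_k/\ep_k))\le\gb_1+\gamma$ on the sublevel set $\gS$. Note in particular that your sketch makes no use of $\mu_k$ or $\gb_1$ at all; without them the argument cannot get off the ground, since the hypothesis at time $t_k^2$ is precisely $u^{\ep_k}(0,t_k^2)=\gb_2$, and you need the earlier crossing at level $\gb_1$ to anchor the comparison.
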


\begin{prop}\lab{thm:B} Assume \eqref{B} and 
 let $u^\ep\in C(\lbar Q)\cap C^{2,1}(Q)$ be a solution of \eqref{eq:ql-pde} and \eqref{eq:ibv-g}.  
Assume further that there exist sequences 
 $\{\mu_k\},\, \{\gl_k\} \subset (0,\,\infty)$ and $\{\ep_k\}\subset(0,\,1)$
and constants $0<a_1<a_2$ and $\beta_1, \beta_2 \in I_g$ such that 
$\lim_{k\to\infty}\ep_k=0$, and, for all $k\in \N$,
\[0<a_1\leq \mu_k <\gl_k\leq a_2, \ \ 
u^{\ep_k}(0,\exp(\mu_k/\ep_k))=\gb_1 \ \ \text{and} \ \  
u^{\ep_k}(0,\exp(\gl_k/\ep_k))=\gb_2.
\]
If $\,\gb_1<\gb_2,$ 
then $\,G^+(\gb_2)\geq \gb_2$,
and, if $\,\gb_2<\gb_1$, 
then 
$\,G^-(\gb_2)\leq \gb_2.$ 
\end{prop}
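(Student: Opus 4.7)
The plan is to argue by contradiction, using only pde tools. I focus on the case $\beta_1<\beta_2$; the case $\beta_2<\beta_1$ follows by a symmetric argument working with a lower barrier.

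Suppose for contradiction that $G^+(\beta_2)<\beta_2$. Using the upper semicontinuity of $G^+$, the joint continuity of $(c,x)\mapsto V^c(x)$ on $\R\times\lbar\gO$, and the continuity of $c\mapsto M(c)$, I would choose $\gd\in(0,(\beta_2-\beta_1)/4)$, $\gamma>0$, and a relatively open neighborhood $N$ of $\argmin(V^{\beta_2}|\pl\gO)$ in $\pl\gO$ so that, uniformly for $c$ in the closed interval $J:=[\beta_2-2\gd,\beta_2+2\gd]$,
\[
g<\beta_2-3\gd\ \text{ on } \ \lbar N\qquad\text{and}\qquad V^c>M(\beta_2)+2\gamma\ \text{ on } \ \pl\gO\setminus N.
\]
Intuitively, near the easiest exit locus the boundary data is well below $\beta_2$, while elsewhere the quasi-potential is so large that the solution at the origin cannot feel $g$ for an exponentially long time.

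The main task is to produce, for each small $\ep>0$, an upper super-solution $\Phi^\ep$ of the quasilinear equation \eqref{eq:ql-pde} with $\Phi^\ep\ge g$ on $\pl\gO\tim[0,\infty)$ and $\Phi^\ep(0,t)\le\beta_2-\gd$ for every $t$ beyond an initial transient, together with an initial adjustment so that $\Phi^\ep\ge u^{\ep_k}$ at some time $t_k\in[\exp(\mu_k/\ep_k),\exp(\lambda_k/\ep_k)]$ throughout $\lbar\gO$. The comparison principle for \eqref{eq:ql-pde} would then give $u^{\ep_k}(0,\exp(\lambda_k/\ep_k))\le\Phi^\ep(0,\exp(\lambda_k/\ep_k))\le\beta_2-\gd$ for all large $k$, contradicting the hypothesis $u^{\ep_k}(0,\exp(\lambda_k/\ep_k))=\beta_2$. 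The barrier $\Phi^\ep$ itself is naturally built from two pieces: a time-dependent barrier derived from $V^{\beta_2}$ by the machinery of \cite{IS2015}, valid up to $t\sim\exp((M(\beta_2)+\gamma)/\ep)$, and the stationary (elliptic) super-solution of the quasilinear equation with boundary data $g$, which by the identification of its $\ep\to 0$ limit with a fixed point of $G$ has value at the origin below $\beta_2-\gd$ for small $\ep$ thanks to the contradiction hypothesis.

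The hard part will be adapting the barrier constructions of \cite{IS2015}, engineered for the linear equation $w_t=\ep\tr[a(x,c)D^2w]+b\cdot Dw$ with frozen $c$, into a genuine super-solution of \eqref{eq:ql-pde}. Because the linear barriers have $|D^2\phi^\ep|$ of order $1/\ep^2$, a naive treatment of the perturbation $a(x,u^\ep)-a(x,c)$ destroys the super-solution inequality. Overcoming this requires confining the range of $\Phi^\ep$ to the narrow interval $J$ and exploiting the uniform continuity of $a(x,\cdot)$ together with a consistent choice of the freezing parameter $c=c(x,t)$ matching the values of the barrier, i.e., an envelope or fixed-point construction in the viscosity sense such that the super-solution inequality evaluated pointwise at $c=\Phi^\ep(x,t)\in J$ reduces to the linear super-solution inequality for a single $\phi^\ep_c$, which is established. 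A secondary technical difficulty is to arrange the initial-data adjustment without disrupting the tight control at $x=0$; this is feasible because at $t_k:=\exp(\mu_k/\ep_k)$ one has $u^{\ep_k}(0,t_k)=\beta_1<\beta_2-2\gd$ already, and only a short transient is needed to raise the barrier above $u^{\ep_k}$ elsewhere without affecting its value at the origin on the relevant exponential time window.
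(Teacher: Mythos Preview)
Your proposal has a genuine gap at the core step: you aim to build a super-solution $\Phi^\ep$ of the \emph{quasilinear} equation whose range is confined to a narrow interval $J$ around $\beta_2$, so that $a(x,\Phi^\ep)\approx a(x,\beta_2)$ and the linear barrier inequalities transfer. But the barriers that control exit times are of the form $\exp\bigl((v(x)-m)/\ep\bigr)$ and are exponentially large near $\pl\gO$; they cannot be confined to $J$. The ``envelope or fixed-point construction'' you sketch would have to reconcile this unboundedness with the requirement $\Phi^\ep\in J$, and no mechanism for doing so is indicated. Likewise, the appeal to the stationary elliptic problem and a ``fixed point of $G$'' is not developed; identifying the limit of the elliptic solution at the origin with such a fixed point is itself nontrivial and not part of the available toolkit.

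The paper circumvents this difficulty entirely by a change of viewpoint that you do not mention: one regards $u^\ep$ as a solution of the \emph{linear} equation \eqref{eq:l-pde} with coefficient $a^\ep(x,t):=a(x,u^\ep(x,t))$, and then confines $u^\ep$ (not the barrier) to $[\beta_2-\gd,\beta_2+\gd]$ on $\gO_{\gd/2}$ in the relevant time window via the asymptotic-constancy Theorem~\ref{thm:conclusion3}. This yields $a^\ep\ge\ga_\gd^-$ (or $\le\ga_\gd^+$) pointwise, and the barrier results of Section~4 apply directly to the linear equation with that bound---no super-solution of the quasilinear equation is ever needed. The contradiction is then obtained in several steps (Proposition~\ref{thm:short} to secure a time gap, Theorem~\ref{thm:conclusion} with the boundary piece $\Pi=\{g>\beta_2+\gd\}$ to propagate the lower bound into $\gO^\Pi_\gamma$, and finally Proposition~\ref{thm:long}). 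Your outline contains none of these ingredients; in particular, the asymptotic-constancy step, which is what makes the linearization legitimate, is absent.
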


\begin{prop}\lab{thm:C} Assume 
\eqref{B} and 
let $u^\ep\in C(\lbar Q)\cap C^{2,1}(Q)$ be a solution of \eqref{eq:ql-pde} and \eqref{eq:ibv-g}.  
Fix $\gb_0\in I_g$ and $\rho_0>0$, and assume that,
for any $\gd>0$, there exist $\gamma>0$ and a sequence  
$\{\ep_k\}\subset (0,\,1)$ such that $\lim_{k\to\infty}\ep_k=0$ and, for all 
$\rho\in[\rho_0-\gamma,\,\rho_0+\gamma]$ and $k\in\N$,
\beq\label{eq:C1}
u^{\ep_k}(0,\,\exp(\rho/\ep_k))\in[\gb_0-\gd,\,\gb_0+\gd]. 
\eeq
If either 
\beq\label{eq:C2}
G^-(\gb_0)> \gb_0 \ \ \text{ or } \ \ G^+(\gb_0)<\gb_0,
\eeq
then $\rho_0\leq M(\gb_0)$.  
\end{prop}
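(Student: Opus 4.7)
The plan is to argue by contradiction using the barrier machinery of \cite{IS2015}. Assume $\rho_0 > M(\gb_0)$ and, for concreteness, $G^-(\gb_0) > \gb_0$; the other case is symmetric, with upper barriers replacing lower ones. Set $2\eta := G^-(\gb_0) - \gb_0 > 0$ and $K_0 := \argmin(V^{\gb_0}|\pl\gO)$. Using the continuity of $(c, x) \mapsto V^c(x)$ on $\R \times \lbar\gO$ (hence of $c \mapsto M(c)$), the upper semicontinuity of $c \mapsto \argmin(V^c|\pl\gO)$, and the definition of $G^-$, I fix $\gs > 0$ and an open neighborhood $U \subset \pl\gO$ of $K_0$ such that $g \geq \gb_0 + \eta$ on $\lbar U$ and, for every $c \in [\gb_0 - \gs, \gb_0 + \gs]$, both $M(c) < \rho_0 - 3\gs$ and $\argmin(V^c|\pl\gO) \subset U$ hold.

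Next, I fix $\gd \in (0, \eta/8)$ and apply the standing hypothesis to obtain $\gamma \in (0, \gs)$ and a sequence $\{\ep_k\}$ with $\ep_k\to 0$ and $u^{\ep_k}(0, \exp(\rho/\ep_k)) \leq \gb_0 + \gd$ for every $\rho \in [\rho_0 - \gamma, \rho_0 + \gamma]$ and every $k$. Writing $T_0^\ep := \exp((\rho_0 - \gamma)/\ep)$, $T_1^\ep := \exp((\rho_0 + \gamma)/\ep)$, and noting that $u^\ep \geq g_{\min}$ everywhere by comparison with constants, the core task is to construct, for all sufficiently small $\ep$, a viscosity subsolution $w^\ep$ of \eqref{eq:ql-pde} on $\gO \times (T_0^\ep, T_1^\ep]$ satisfying $w^\ep(\cdot, T_0^\ep) \leq g_{\min}$ on $\lbar\gO$, $w^\ep \leq g$ on $\pl\gO \times [T_0^\ep, T_1^\ep]$, and $w^\ep(0, T_1^\ep) \geq \gb_0 + \eta/2$. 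Granting such $w^\ep$, the parabolic comparison principle gives $w^\ep \leq u^\ep$ on $\lbar\gO \times [T_0^\ep, T_1^\ep]$, and evaluating at $(0, T_1^{\ep_k})$ yields $\gb_0 + \eta/2 \leq u^{\ep_k}(0, T_1^{\ep_k}) \leq \gb_0 + \gd < \gb_0 + \eta/8$, a contradiction.

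The hard step is the construction of $w^\ep$. The natural candidate is a self-similar barrier $w^\ep(x, t) \approx \Phi\bigl(e^{-V^{\gb_0}(x)/\ep}(t - T_0^\ep)\bigr)$, where $\Phi$ is a smooth increasing profile from $g_{\min}$ to $\gb_0 + 3\eta/4$. Since $V^{\gb_0}(0) = 0$ and $\ep \log T_1^\ep = \rho_0 + \gamma > M(\gb_0)$, this would drive $w^\ep(0, T_1^\ep)$ to $\gb_0 + 3\eta/4$, while the choice of $\eta$ and $U$ keeps $w^\ep$ below $g$ near the minimum set $K_0$. Two genuine obstacles must be overcome, both addressed in \cite{IS2015}: first, at boundary points outside $U$ the ratio $e^{-V^{\gb_0}(y)/\ep} T_1^\ep$ need not be small, so the ansatz must be spatially localized, for instance by replacing $V^{\gb_0}$ with the state-constraints quasi-potential of a subdomain $\tilde\gO \subset \gO$ whose only contact with $\pl\gO$ lies inside $U$, ensuring $w^\ep \leq g$ on the entire boundary; second, and more essentially, this candidate is only a subsolution of the linearization of \eqref{eq:ql-pde} with frozen coefficient $a(x, \gb_0)$, and the defect $\ep \tr\bigl[(a(x, w^\ep) - a(x, \gb_0)) D^2 w^\ep\bigr]$ must be absorbed into the other terms using the uniform continuity of $a(x, \cdot)$ and the decay of $\Phi'$ and $\Phi''$ near the extreme values. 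This is precisely the content of the barrier lemmas in \cite{IS2015}, and reducing the present claim to a direct quotation of that technology, with $c = \gb_0$ and the parameters chosen above, is the essential remaining work.
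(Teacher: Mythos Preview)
Your overall contradiction strategy is right, but the proposal has a genuine gap at the point you yourself flag as ``the essential remaining work.'' You try to build $w^\ep$ as a viscosity subsolution of the \emph{quasilinear} equation \eqref{eq:ql-pde}, i.e.\ with coefficient $a(x,w^\ep)$, and then invoke a quasilinear comparison principle. Neither ingredient is available off the shelf: the barrier lemmas in \cite{IS2015} are for \emph{linear} equations (coefficient $a(x)$ independent of the solution), so they say nothing about the defect $\ep\tr[(a(x,w^\ep)-a(x,\gb_0))D^2w^\ep]$, and the viscosity comparison principle for $u_t=\ep\tr[a(x,u)D^2u]+b\cdot Du$ is not automatic since the operator is not proper in $u$. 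Your sketch does not explain how either obstacle is overcome.

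The paper avoids both difficulties by a linearization trick that you do not use. One views $u^\ep$ as a classical solution of a \emph{linear} equation with frozen coefficient $a^\ep(x,t):=a(x,u^\ep(x,t))$, so only the linear comparison principle is needed. The key missing step is the asymptotic constancy (Theorem~\ref{thm:conclusion3}): the hypothesis \eqref{eq:C1} controls $u^{\ep_k}$ only at the origin, but Theorem~\ref{thm:conclusion3} upgrades this to $|u^{\ep_k}(x,t)-\gb_0|<\gd$ on all of $\gO_{\gd/2}\tim[\exp((\rho_0-\gamma)/\ep_k),\exp((\rho_0+\gamma)/\ep_k)]$. This in turn yields a pointwise matrix inequality $\ga_\gd^-(x)\leq a^{\ep_k}(x,t)$ on that slab, where $\ga_\gd^-$ is the perturbed diffusion from Section~\ref{sec:quasi-p}. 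One then applies the linear long-time barrier (Corollary~\ref{cor:long}) with $\ga=\ga_\gd^-$ to the shifted function $u^{\ep_k}(\cdot,\cdot+\exp((\rho_0-\gamma)/\ep_k))-\gb_0-3\gd$; Proposition~\ref{thm:quasi2-1} guarantees that $M_\gd^-$ is close to $M(\gb_0)<\rho_0$ and that the boundary set $\gS^m\cap\pl\gO$ lies where $g>\gb_0+3\gd$, so the boundary condition of Corollary~\ref{cor:long} is met. This produces the lower bound $u^{\ep_k}(0,\exp((\rho_0+\gamma)/\ep_k))\geq\gb_0+2\gd$, contradicting \eqref{eq:C1}. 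Your outline would become correct if you replaced the attempt to build a quasilinear subsolution by this freeze-then-bound-the-matrix argument.
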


We discuss next some of the new ideas that are needed to prove the main theorem.  
\smallskip

Recall that we are interested in the asymptotic behavior,  as $(\ep,t)\to (0,\infty)$, of 
the solution
$u^\ep$ of  \eqref{eq:ql-pde} and 
\eqref{eq:ibv-g} in a logarithmic time scale,  
that is, in the behavior, as $\ep\to 0$, 
of $u^\ep(x,\exp(\gl/\ep))$ for any fixed $\gl>0$.
It turns out that this is a consequence of what we call ``uniform asymptotic constancy'' which yields that, as $t\to \infty$,  
$u^\ep(\cdot,t)$  behaves similarly to  $u^\ep(0,t)$ in the space $C(\gO)$ equipped with the locally uniform convergence topology, 
\smallskip

The uniform asymptotic constancy (see Theorem~\ref{thm:conclusion3} below) is a crucial observation that goes beyond \cite{IS2015}.   
Roughly it  says that,  
if $u^\ep$ is a bounded 
solution of \eqref{eq:l-pde}, then, as $\ep\to 0$,  
for any compact $K\subset \varOmega$ and $\gd>0$,
\[
u^\ep(x,t)\approx u^\ep(0,t) \ \text{ uniformly for }\ (x,t)
\in K\tim [\e^{\gd/\ep},\,\infty). 
\] 

With this fact at hand 
the main theorem (Theorem~\ref{thm:FK}) 
is an easy consequence of Propositions~\ref{thm:A}, \ref{thm:B} and \ref{thm:C}. 
\smallskip

Their proofs  
are based on the comparison (or maximum) 
principle and, thus, on the construction of barriers, that is sub- and  super-solutions 
of \eqref{eq:ql-pde}. We have already built such  functions 
in  our  previous work \cite{IS2015}, where the matrix $a(x,c)$ 
is independent of $c$. Here (see Proposition~\ref{thm:long} and Corollary \ref{cor:long}) we modify 
the construction of one class of barrier 
functions in order to make the comparison argument 
straightforward.   
\smallskip

The building block of  the  barrier functions in \cite{IS2015} 
and here is viscosity solutions 
of $H_\ga(x, Du)=0$ with some additional normalization conditions, 
where $\ga\in C(\lbar\gO;\bbS^n(\gth_0))$ is 
is selected as explained below
and $H_\ga(x,p):=\ga(x)p\cdot p+\b(x)\cdot p$. 
An important observation is that, if $V_\ga$ is  the quasi-potential associated with $(\ga,b)$, 
then $V_\ga>0$ in $\lbar\gO\setminus\{0\}$ and
$M_\ga:=\min_{\pl\gO}V_\ga>0$.
\smallskip

The barriers $w^\ep:\lbar Q \to \R$  are supersolutions of 
\eqref{eq:ql-pde} of  the form 
\[
w^\ep(x,t):=\exp\left(\fr{v(x)-m}{\ep}\right)+d_\ep t,
\]   
where $m$ and $d_\ep$ are positive constants such that  
$0<m<M_\ga$ and 
$d_\ep = \exp(-\gl_\ep/\ep)$ for some 
$\gl_\ep\approx m$,
and  $v$ is an appropriately chosen smooth approximation of 
$V_\ga$.  The choice of $m$ yields that, for $\ep$ sufficiently small, $w^\ep$ is compatible with  the Dirichlet data $g$ on $\pl\gO \tim[0,\,\infty)$.  

In view of the fact that a priori we 
have little knowledge of the uniform in $\ep$ regularity of solutions of 
\eqref{eq:ql-pde}, given such a solution $u^\ep$, we treat 
$a^\ep =a(x,u^\ep(x,t))$ as an arbitrary element 
of $C(\lbar Q;\bbS^n(\gth_0))$.  
\smallskip

To motivate the choice of $\ga$ in the construction of the barrier function  given the  $a^\ep$ above we compute in $Q$
\begin{align*}
w^\ep_t&-\ep\tr[a^\ep(x,t)D^2 w^\ep]
-b\cdot Dw^\ep   
\\&=d_\ep-\ep^{-1}\exp\left(\fr{v(x)-m}{\ep}\right) 
(H_{\ep}(x,t, Dv)+\ep \tr[a^\ep D^2v]) 
\end{align*}
with $H_{\ep}(x,t,p):=a^\ep(x,t)p\cdot p+b(x)\cdot p$. 
\smallskip

If $\ga\in C(\lbar\gO;\bbS^n(\gth_0))$
satisfies $a^\ep\leq \ga$ in  $Q$, 
then 
\[
w^\ep_t-\ep\tr[a^\ep(x,t)D^2 w^\ep]
-b\cdot Dw^\ep 
\geq d_\ep-\ep^{-1}\exp\left(\fr{v(x)-m}{\ep}\right)
(H_{\ga}(x, Dv)+O(\ep))\geq 0,
\]
with the last the inequality holding, if $\ep$ is sufficiently small, 
because of the choice of 
$v$ and $d_\ep$ --the details are given in Proposition~\ref{thm:short}. 
\smallskip

A very important fact in our analysis (see Proposition 
\ref{thm:quasi2-1} below for the precise statement) is that 
the locally uniform convergence topology of $C(\gO)$ is strong enough
to imply that, if $\ga(x)\approx a(x,c)$ in $C(\gO)$, then 
$M_\ga\approx M(c)$ and $\argmin (V_\ga\,|\,\pl\gO)
\approx \argmin (V^c\,|\,\pl\gO)$.  
\smallskip

To describe the idea which is in the core of the proof of, for example,  
Proposition~\ref{thm:A}, 
 we consider the very special case that,  for $\ep>0$ sufficiently small and some constants $c$, $\gamma>0$ and $0<\gd<\mu<\gl$,
\[
|u^\ep(0,t)-c|<\gamma  \ 
\text{ for all \ $t\in [\exp(\gd/\ep),\,\exp(\gl/\ep)],$}
\]
and
\[
u^\ep(0,\exp(\gd/\ep))=c \ \ \text{ and  } \ \ u^\ep(0,\,\exp(\mu/\ep))>c+\eta \ \text{ 
for some $\eta\in (0,\gamma)$.}
\]

We then choose $\ga  \in C(\lbar \varOmega;\bbS^n(\gth_0))$ so that 
$a^\ep\leq \ga$  in  $ \gO\tim[t_\ep,\,T_\ep]$, where 
$t_\ep:=\exp(\gd/\ep)$ and $T_\ep:=\exp(\gl/\ep)$.  
Using the barrier $w^\ep$ as in the linear case 
(see \cite{IS2015}*{Theorem 1 (i)}),  
we conclude 
that, as $\ep\to 0$, for any $\rho<M_\ga$, 
$u^\ep(0,t) \to c$ for all $t\in [t_\ep,\,T_\ep \wedge \exp(\rho/\ep)]$, 
which implies that $\mu\geq M_\ga$. 
Furthermore, 
according to the previous arguments,  $\ga$ can be  chosen, 
so that,  as  $\gamma \to 0$, $M_\ga \to  M^c$.

\subsection*{Organization of the paper} The rest of the paper is organized as follows. In Section 2 we study 
 the asymptotic constancy, that is the effect  
of the drift term in parabolic equations like 
\eqref{eq:ql-pde}. In Section 3 we introduce Hamilton-Jacobi equations 
related to \eqref{eq:ql-pde}, which have quadratic nonlinearity, and study the continuity properties of the associated quasi-potentials.   
Section 4 is devoted to the construction of two kind of barrier functions, 
or sub- and super-solutions, which are used  to study  
 the asymptotic behavior of solutions of  linear parabolic equations, 
that is  equations like  \eqref{eq:ql-pde} with $a \in C(\lbar Q;\bbS^n(\gth_0))$. 
The proofs of Propositions ~\ref{thm:A}, \ref{thm:B} and \ref{thm:C}  
and Theorem~\ref{thm:FK} are given in Sections 5 and 6 respectively. 
Some basic properties of viscosity solutions are explained in Appendices 
A, B and C.

\section{The Asymptotic constancy} 

 We consider the linear pde
\begin{equation}\label{eq:l-pde}
u_t^\ep=\ep\tr[a^\ep(x,t) D^2u^\ep]+b(x)\cdot Du^\ep 
\ \ \ \text{ in }\ Q. 
\end{equation}

 We assume, in addition to  \eqref{blip}  and \eqref{borigin}, that 
\begin{equation}\label{takis1}
 a^\ep\in C(\lbar Q, \bbS^n(\gth_0)).
\end{equation}

 The  goal  here is to show that  the drift term in 
\eqref{eq:l-pde} has a strong effect to propagate, as $\ep\to 0$,  the values of the solutions $u^\ep$ at $x=0$ 
to $\gO$; for future reference  we call this fact the asymptotic constancy.
\smallskip 

 It turns out that the asymptotic constancy does not depend on any properties of $a^\ep$ other than \eqref{takis1}. It is, therefore, technically more convenient to study, in some instances, 
instead of \eqref{eq:l-pde}, the problem 
\beq\lab{eq:Pucci}
v_t=\ep P^+(D^2v)+b(x)\cdot Dv \ \ \ \text{ in }\ Q,
\eeq
where  $P^+$ is  the  Pucci operator associated with $\bbS^n(\gth_0)$ defined by 
\beq\lab{eq:Pucci1}
P^+(X)=\sup\{\tr[AX]: A\in\bbS^n(\gth_0)\},
\eeq
which is, obviously, uniformly elliptic with constants
$\theta_0$ and $\theta_0^{-1},$ that is, for all matrices $X, Y \in \bbS^n$ such that $X\leq Y,$
\beq\label{takis2}
\theta_0 \tr(Y-X) \leq 
P^+(Y)-P^+(X) \leq \theta_0^{-1} \tr(Y-X).   
\eeq

\subsection*{Some useful barrier functions}\lab{sec:barrier} 
We fix an auxiliary function  $h\in C^2([0,\,\infty))$ with  
the properties
\beq\lab{eq:h}
0\leq h \leq 1,  \ h=0 \ \text{ in } \ [0, 1/2], \  h=1 \ \text{ in } \ [1,\infty) \ \text{and} \ h'\geq 0,
\eeq
set 
\[k:=b_0/ 2  \ \  
\text{ and }  \  \ R_0:= 2 \sqrt{2n}/\sqrt{b_0\theta_0},
\]
choose  $R\in [R_0,\infty), r \in (0,r_0]$, where $r_0$ is as in \eqref{borigin},  and  $\ep_0\in(0,\,1)$ 
so that 
\beq\lab{eq:1.4}
\sqrt\ep_0 R<r,
\eeq
and, for  $\ep\in(0,\ep_0]$, let 
\beq\lab{eq:tau}
\tau=\tau(\ep):=\frac{1}{k}\log\left(\fr{r}{R\sqrt\ep}\right).
\eeq 

 With all these choices at hand we introduce the functions 
 $p^\ep, q^\ep:  \R^n\tim[0,\,\infty)\to\R$ defined by 
\beq\lab{eq:p}
p^\ep(x,t):=h((R\sqrt\ep)^{-1}|x|\e^{-kt})
\eeq
and
\beq\lab{eq:q}
q^\ep(x,t):=p^\ep(x,t)+\fr{\|h''\|_{L^\infty}}{R^2\gth_0}\int_0^t
\e^{-2ks}\mathrm{d} s;
\eeq
observe that, since $h$ vanishes 
identically in a neighborhood of  
the positive time axis $l:=\{0\} \times (0,\infty)$, $p^\ep$ and $q^\ep$ are smooth 
in $\R^n\tim(0,\,\infty)$. 
\smallskip

 We note  that  $p^\ep$ appears 
in the proof of \citelist{\cite{FK2010}*{Lemma 3.6}\cite{FK2012a}}.  The  difference is that these references consider equations like \eqref{eq:l-pde}, while here 
we study \eqref{eq:Pucci}. 
\smallskip

 The following lemma summarizes the properties of $q^\ep$. Its proof is based on long explicit but also straightforward calculations. The reader may want to skip the details on first reading.

\begin{lem}\lab{lem:q} Assume \eqref{blip}, \eqref{borigin} and  \eqref{takis2}. With the above choices of $\,k$, $R$, $r$, $\ep_0$, $\ep$ and $\tau$, 
the function $q^\ep $ given by \eqref{eq:q} is a supersolution to \eqref{eq:Pucci} in  
$B_{r_0}\tim (0,\,\infty).$
Moreover,  
\[\begin{cases}
q^\ep(\cdot,0) \geq 0 \ \text{ in  } \  B_r,  \quad 
q^\ep(\cdot,0)\geq 1 \  \text{ in} \  B_r\setminus B_{\sep R},\\[1.2mm]
q^\ep\geq 1 \  \text{ in} \ \pl B_r\tim[0,\,\tau] \ \text{ and } \ 
q^\ep(\cdot ,\tau)\leq \fr{\|h''\|_{L^\infty}}{b_0\gth_0R^2} \ \text{ on } \ B_{r/2}.
\end{cases}
\]
\end{lem}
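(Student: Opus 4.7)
The plan is to carry out a direct computation, noting that the five claims split naturally into (i) the supersolution inequality on $B_{r_0}\times(0,\infty)$, and (ii) four pointwise initial/boundary bounds, the latter reducing to evaluation of $h$ at the endpoints of its transition interval $[1/2,1]$.

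For the supersolution property, set $\alpha(t):=(R\sqrt\ep)^{-1}e^{-kt}$ and $s=\alpha(t)|x|$, so that $p^\ep=h(s)$. A direct chain-rule computation gives $p^\ep_t=-ksh'(s)$, and the Hessian $D^2p^\ep$ is diagonal in the radial/tangential frame at $x\neq 0$, with one radial eigenvalue $h''(s)\alpha^2$ and $n-1$ tangential eigenvalues, all equal to $h'(s)\alpha/|x|\geq 0$. Using $b(x)\cdot x\leq -b_0|x|^2$ from \eqref{borigin} together with $h'\geq 0$ and $k=b_0/2$, one gets
\[
p^\ep_t-b(x)\cdot Dp^\ep \;\geq\; h'(s)\alpha\bigl(b_0|x|-k|x|\bigr)\;=\;ksh'(s).
\]
From the eigenvalue decomposition and the definition of $P^+$,
\[
\ep P^+(D^2p^\ep)\;\leq\; \theta_0^{-1}\bigl[\|h''\|_{L^\infty}R^{-2}e^{-2kt}+(n-1)h'(s)\sqrt\ep\, R^{-1}e^{-kt}/|x|\bigr],
\]
and the first term on the right is cancelled exactly by the time derivative $\frac{\|h''\|_{L^\infty}}{R^2\theta_0}e^{-2kt}$ of the added integral in $q^\ep$. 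Combining these estimates produces
\[
q^\ep_t-\ep P^+(D^2q^\ep)-b\cdot Dq^\ep \;\geq\; h'(s)R^{-1}e^{-kt}|x|^{-1}\ep^{-1/2}\bigl(k|x|^2-\theta_0^{-1}(n-1)\ep\bigr).
\]
Since $h'(s)=0$ outside $s\in(1/2,1)$, on the support of $h'(s)$ one has $|x|\geq (R/2)\sqrt\ep$, and the choice $R\geq R_0=2\sqrt{2n/(b_0\theta_0)}$ together with $k=b_0/2$ forces $k|x|^2\geq \theta_0^{-1}(n-1)\ep$, so the right-hand side is nonnegative. At points where $s\leq 1/2$ (in particular in a neighborhood of $x=0$), $p^\ep$ vanishes identically together with its spatial derivatives, and the supersolution inequality reduces to the trivially positive $q^\ep_t=\frac{\|h''\|_{L^\infty}}{R^2\theta_0}e^{-2kt}\geq 0$.

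The four pointwise claims are then immediate. The bound $q^\ep(\cdot,0)\geq 0$ on $B_r$ follows from $h\geq 0$; for $x\in B_r\setminus B_{\sep R}$ one has $s=|x|/(R\sqrt\ep)\geq 1$, hence $h(s)=1$ and $q^\ep(x,0)\geq 1$. On $\pl B_r\times[0,\tau]$ the definition \eqref{eq:tau} of $\tau$ gives $s=(r/(R\sqrt\ep))e^{-kt}\geq 1$ for $t\leq\tau$, so again $h(s)=1$. Finally, for $x\in B_{r/2}$ and $t=\tau$, $s\leq 1/2$, so $p^\ep(x,\tau)=0$; the remaining integral is bounded by $\int_0^\infty e^{-2ks}\di s=(2k)^{-1}=b_0^{-1}$, yielding $q^\ep(x,\tau)\leq \|h''\|_{L^\infty}/(b_0\theta_0R^2)$.

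The only delicate point is bookkeeping: one must check that the tangential contribution to $\ep P^+(D^2p^\ep)$, which survives the cancellation produced by the time-integral term, is controlled by the drift gain $ksh'(s)$ precisely on the annulus where $h'$ is supported, and this is exactly what fixes the threshold $R_0=2\sqrt{2n/(b_0\theta_0)}$. Everything else is routine calculus.
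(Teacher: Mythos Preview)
Your proof is correct and follows essentially the same route as the paper's: a direct computation of $p^\ep_t$, $Dp^\ep$, $D^2p^\ep$, followed by bounding $P^+$ via the radial/tangential eigenstructure (the paper writes this as the decomposition $I=\bar x\otimes\bar x+(I-\bar x\otimes\bar x)$), then using the support condition $s\in(1/2,1)$ to bound $|x|$ from below and check that the threshold $R_0=2\sqrt{2n/(b_0\gth_0)}$ suffices. The four pointwise bounds are handled identically. The only cosmetic difference is that you keep the sharp factor $n-1$ from the tangential block whereas the paper relaxes it to $n$; both close with the same choice of $R_0$.
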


\begin{proof}
First note that 
\[
p^\ep(x,t)=1 \ \  \ \text{ if } \ |x|\geq R\sqrt\ep \e^{kt}
\quad\text{ and }\quad
p^\ep(x,t)=0 \ \ \ \text{ if } \ |x|\leq \fr 12 R\sqrt\ep \e^{kt}. 
\]

For  $(x,t)\in B_{r_0}\tim(0,\,\infty)$ we write 
 $\rho=\fr{1}{R\sqrt \ep}$, $r_{x,t}=(R\sqrt\ep)^{-1}|x|\e^{-kt}$ and $\bar x:=x/|x|$ (since, in view of the above, $p^\ep$ vanishes in a neighborhood of the origin we do not
 have to be concerned about $x=0$),
and find 
\[
\begin{aligned}
p^\ep_t(x,t)&\,=-kh'(r_{x,t})|x|\rho\e^{-kt},
\qquad 
Dp^\ep(x,t)=h'(r_{x,t})\rho\bar x\e^{-kt},\\
D^2p^\ep(x,t)&\,=h'(r_{x,t})\rho \e^{-kt}\fr{1}{|x|}(I-\bar x\otimes \bar x)
+h''(r_{x,t})\rho^2\e^{-2kt} \bar x\otimes\bar x.
\end{aligned}
\]

Moreover, for any $a \in\bbS^n(\gth_0)$ and all $(x,t)\in \overline Q$ with $x \neq 0$, we have
\[ |\tr [a(I-\bar x\otimes \bar x)]| \leq \gth_0^{-1}(n-1)<\gth_0^{-1}n
\quad\text{ and }\quad
|\tr [a\bar x\otimes\bar x]|\leq \gth_0^{-1},
\]
and, therefore,
\[
\begin{aligned}
p^\ep_t&-\ep\tr [aD^2p^\ep]-b(x)\cdot Dp^\ep
\\&=h'(r_{x,t})\rho|x|\e^{-kt}\left\{
-k-|x|^{-1}b(x)\cdot \bar x
-\fr{\ep}{|x|^2}\tr[a(I-\bar x\otimes\bar x)]\right\}
\\&\quad -\ep h''(r_{x,t})\rho^2\e^{-2kt}\tr[a\bar x\otimes\bar x]
\\&\geq h'(r_{x,t})\rho|x|\e^{-kt}\left\{
-k+b_0
-\fr{n\ep}{\gth_0|x|^2}\right\}
-\ep \|h''\|_{L^\infty}\rho^2\e^{-2kt}\gth_0^{-1}. 
\end{aligned}
\]

 Observe that 
\beq\lab{eq:1.1}
\fr 12\leq r_{x,t}\leq 1 
\ \ \ \text{ if and only if } \ \ 
\fr 12 R\sqrt \ep \e^{kt}\leq|x|\leq R\sqrt\ep\e^{kt},
\eeq
and
\[
h'(r_{x,t})\fr{1}{|x|^2}
\leq h'(r_{x,t})\fr{4\e^{-2kt}}{R^2\ep}
\leq h'(r_{x,t})\fr{4}{R^2\ep}. 
\]

Using the observations above and \eqref{borigin} 
and recalling the choices of the constants and  
that $a\in\bbS^n(\gth_0)$ is arbitrary, we get
\[
\begin{aligned}
p^\ep_t &-\ep P^+(D^2p^\ep)-b(x)\cdot Dp^\ep
\\&\geq 
h'(r_{x,t})\rho|x|\e^{-kt}\left\{
-k+b_0
-\fr{4n}{\gth_0 R^2}\right\}
-\|h''\|_{L^\infty}\fr{\e^{-2kt}}{\gth_0R^2}
\geq 
-\|h''\|_{L^\infty}\fr{\e^{-2kt}}{\gth_0R^2}.
\end{aligned}
\]

Thus, noting that, for all $t>0$,  
\[
p^\ep_t(0,t)-\ep P^+(D^2p^\ep(0,t))-b(0)\cdot Dp^\ep(0,t)=0\quad 
\]
we conclude that
\[
p^\ep_t -\ep P^+(D^2p^\ep)-b(x)\cdot Dp^\ep\geq 
-\|h''\|_{L^\infty}\fr{\e^{-2kt}}{\gth_0R^2} \ \ \text{in} \ \ B_{r_0}\tim(0,\,\infty),
\]
and, hence,  $q^\ep$ is a supersolution of \eqref{eq:Pucci} in $B_{r_0}\tim(0,\,\infty)$.
\smallskip

 Finally, we observe that, 
if $\,0\leq t\leq \tau\,$ and $\,x\in\pl B_{r}$, then 
\[
\fr{|x|\e^{-kt}}{\sep R}\geq \fr{r\e^{-k\tau}}{\sep R}=1
\quad \text{ and } 
\quad
q^\ep(x,t)\geq p^\ep(x,t)=1, 
\]
and, if $\,x \in B_{r/2}$, then 
\[
\fr{|x|\e^{-k\tau}}{\sep R}\leq \fr{r\e^{-k\tau}}{2\sep R}=\fr 12
\quad \text{ and }\quad 
q^\ep\leq p^\ep+\fr{\|h''\|_{L^\infty}}{b_0\gth_0 R^2}
=\fr{\|h''\|_{L^\infty}}{b_0\gth_0 R^2}. 
\]

Moreover, 
\[
q^\ep(x,0)=p^\ep(x,0)=h(|x|/(\sep R))
\geq 
\bcases
0 \quad &\text{ for all }\ x\in B_r,\\[3pt]
1 & \text{ for all } \ x\in B_r\setminus B_{\sep R}. 
\ecases \qedhere
\]
\end{proof}


\subsection*{An application of the Harnack inequality}\lab{sec:harnack}  We use a consequence of the Harnack inequality to obtain an a priori bound  for the 
oscillations  of the $u^\ep$'s, which are uniform in $\ep$ and $t$ up to $\infty$. 
\smallskip

 If $u^\ep\in C^{2,1}(Q)$ is a solution of \eqref{eq:l-pde}, then  
\[
v^\ep(y,t):=u^\ep(\sep y,t) \ \ \ \text{ for } \ (y,t)\in B_{r_0/\sep}\tim[0,\infty),  
\]
satisfies 
\begin{equation}\label{eq:2.1}
v_t^\ep =\tr[a^\ep(\sep y,t)D^2v^{\ep}] + \fr{b(\sep y)}{\sep}\cdot Dv^\ep   \  \text{ in } \ 
B_{r_0/\sep }\tim(0,\,\infty).
\end{equation}

 It also follows from \eqref{blip} 
that there exists   
$L_b>0$ such that
\[
|b(x)|\leq L_b|x| \ \ \ \text{ for all } \ x\in B_{r_0},
\]
and, hence,  
\beq\label{takis3}
\fr{|b(\sep y)|}{\sep }\leq L_b|y| 
\ \ \ \text{ for all } \ y\in B_{r_0/\sep}.  
\end{equation}

 Next we recall the following consequence of the Harnack inequality from  Krylov \cite{Kr1987}*{Theorem 4.2.1}.

\begin{prop}\label{thm:harnack}
Assume \eqref{takis1} and \eqref{takis3},  fix $R\in(0,\,2]$, $(z,\tau)\in\R^n\tim(0,\infty)$ such that 
$
B_R(z)\subset B_{r_0/\sep}) \ \text{ and } \  \tau> 2R^2,
$
and let 
$w\in C^{2,1}(B_{R}(z)\tim(\tau-2R^2,\,\tau))$ be a nonnegative solution 
of  \eqref{eq:2.1} in 
$B_R(z)\tim(\tau-2R^2,\,\tau)$.  
There exists a constant $C=C(R,\gth_0,L_b, n)>1$ such that
\[
w(z,\tau-R^2)\leq C \inf_{y\in B_{R/2}(z)}w(y,\tau). 
\]
\end{prop}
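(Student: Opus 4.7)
The plan is to recognize the inequality as a direct consequence of the classical Krylov--Safonov parabolic Harnack inequality for nonnegative supersolutions of uniformly parabolic non-divergence form equations with bounded measurable coefficients and bounded drift, precisely in the form proved in \cite{Kr1987}*{Theorem~4.2.1}. Recall that $w$ solves, in $B_R(z)\times(\tau-2R^2,\tau)$, the equation
\[
w_t=\tr[\tilde a^\ep(y,t)D^2w]+\tilde b^\ep(y)\cdot Dw,
\]
where $\tilde a^\ep(y,t):=a^\ep(\sep y,t)\in\bbS^n(\gth_0)$ and $\tilde b^\ep(y):=\sep^{-1}b(\sep y)$ satisfies $|\tilde b^\ep(y)|\leq L_b|y|$ on $B_{r_0/\sep}$ by \eqref{takis3}. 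Thus the ellipticity constants are $\gth_0,\gth_0^{-1}$, and on $B_R(z)$ the drift is bounded by $L_b(|z|+R)$; in particular both bounds are independent of time.

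The first step would be to rescale to the unit cylinder. Setting
\[
\tilde w(\xi,s):=w(z+R\xi,\,\tau-R^2+R^2s)\qquad\text{for }(\xi,s)\in B_1\times(-1,1),
\]
one checks that $\tilde w$ is nonnegative and satisfies
\[
\tilde w_s=\tr[\tilde a^\ep(z+R\xi,\tau-R^2+R^2s)D^2_\xi\tilde w]+R\tilde b^\ep(z+R\xi)\cdot D_\xi\tilde w,
\]
which is still uniformly parabolic with constant $\gth_0$ and with drift bounded on $B_1$ by $RL_b(|z|+R)$. The target inequality becomes
\[
\tilde w(0,0)\leq C\inf_{\xi\in B_{1/2}}\tilde w(\xi,1).
\]

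The second step would be to invoke \cite{Kr1987}*{Theorem~4.2.1} for $\tilde w$ on $B_1\times(-1,1)$. That theorem gives exactly an estimate of the above type, with a constant depending only on $n$, the ellipticity constant $\gth_0$, and an upper bound on the drift; all of these are controlled by $n$, $\gth_0$, $L_b$, and $R$ (together with the a priori bound on $|z|+R$ inherited from $B_R(z)\subset B_{r_0/\sep}$; this is the source of the $R$-dependence in the final constant and the only place the hypothesis $R\in(0,2]$ is used, to keep the relevant cylinder inside the domain of definition). Undoing the rescaling then yields the desired inequality with a constant $C=C(R,\gth_0,L_b,n)$.

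The main obstacle I anticipate is the careful bookkeeping of how the Harnack constant in Krylov's theorem depends on the $L^\infty$-bound of the drift: one needs to verify that this bound can be packaged purely in terms of $R,\gth_0,L_b,n$ (exploiting that the linear growth of $b$ at the origin is the only quantitative information available) rather than depending on $z$ or $\ep$ separately. Once this dependence is traced, the rescaling argument above reduces the statement to a direct citation.
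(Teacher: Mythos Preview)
Your approach matches the paper's: the proposition is not proved in the paper but simply recalled as a consequence of Krylov's parabolic Harnack inequality \cite{Kr1987}*{Theorem~4.2.1}, exactly as you propose to invoke it after the parabolic rescaling to the unit cylinder.

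Your flagged ``obstacle'' is a real one and worth a comment. The Krylov--Safonov constant depends on the $L^\infty$-bound of the drift on $B_R(z)$, which here is $L_b(|z|+R)$; the hypothesis $B_R(z)\subset B_{r_0/\sep}$ does \emph{not} bound $|z|$ independently of $\ep$, so strictly speaking the constant $C$ should carry a dependence on $|z|$ as well. The paper's statement is slightly loose in this respect. However, this is harmless for the paper's purposes: the only use of the proposition is in Corollary~\ref{cor:harnack}, where $R=2$ and all centers $z=x_i$ lie in the fixed ball $B_m$, so the drift bound is at most $L_b(m+2)$ and the resulting Harnack constant depends only on $m,\gth_0,L_b,n$ --- and the final constant $\eta$ in that corollary is explicitly allowed to depend on $m$. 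So your reduction is correct; just record that the constant depends on an upper bound for $|z|+R$, and note that in the application this is $m+2$.
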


 We use now Proposition~\ref{thm:harnack} to obtain the following improvement 
of oscillation-type result for solutions to \eqref{eq:l-pde}. 

\begin{cor} \lab{cor:harnack} Assume  \eqref{takis1} and \eqref{takis3}  and, for  $\ep\in(0, 1)$, let  $u^\ep\in C(\lbar Q)\cap C^{2,1}(Q)$ be a 
solution of \eqref{eq:l-pde} in $Q$. Fix  $m\in\N$ and $T>0$ and assume   
that $\, (m+2)\sep\leq r_0, \  T>4(m+1)\,$ and 
\begin{equation}\label{eq:2.2}\bcases
u^\ep(0,t)\leq 0 \ \ \ &\text{ for all } \ t\in (0,\,T),
\\[1mm] u^\ep(x,t)\leq 1  \ \ \ &\text{ for all } \ (x,t)\in B_{(m+2)\sep}
\tim(0,\,T).
\ecases
\end{equation}
There exists a constant $\eta=\eta(m,\gth_0,L_b,n)\in(0,\,1)$ such that 
\[
u^\ep\leq \eta \  \text{ in  } \  B_{m\sep }\tim (4(m+1),\,T). 
\]
\end{cor}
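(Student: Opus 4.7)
The plan is to rescale and reduce the problem to a non-negative solution of a linear parabolic equation to which the Harnack estimate of Proposition~\ref{thm:harnack} can be applied iteratively along a chain, yielding a quantitative lower bound on the non-negative solution that translates back into the desired upper bound on $u^\ep$.

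First, I pass to the rescaled variable $y = x/\sep$ and set $v^\ep(y,t) := u^\ep(\sep y, t)$, which, by the discussion preceding Proposition~\ref{thm:harnack}, satisfies \eqref{eq:2.1} on $B_{r_0/\sep}\times(0,T)$ with a drift bounded as in \eqref{takis3}. Introduce $w^\ep := 1 - v^\ep$. Since \eqref{eq:l-pde}, and hence \eqref{eq:2.1}, has no zeroth-order term, $w^\ep$ solves the same linear equation, and the hypotheses \eqref{eq:2.2} give that $w^\ep\geq 0$ on $B_{m+2}\times(0,T)$ and $w^\ep(0,t)\geq 1$ for every $t\in(0,T)$. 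Note that $B_{m+2}\subset B_{r_0/\sep}$ by the assumption $(m+2)\sep\leq r_0$.

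Second, I use Proposition~\ref{thm:harnack} with $R=1$ to propagate the lower bound $w^\ep(0,\cdot)\geq 1$ from the time axis to a ball of radius $m$ by a chain argument. Given $y\in B_m$, choose points $z_0=0,z_1,\dots,z_k=y$ with $k\leq 2m+1$ and $|z_j-z_{j-1}|\leq 1/2$; then $|z_{j-1}|\leq m$, so $B_1(z_{j-1})\subset B_{m+1}\subset B_{m+2}$, and $w^\ep$ is a non-negative solution of \eqref{eq:2.1} on this ball. For any target time $\tau^*\in(4(m+1),T)$, set $\tau_j := \tau^*-(k-j)$, so that $\tau_0 = \tau^*-k > 4(m+1)-(2m+1) > 2$ and each $\tau_j > 2 = 2R^2$. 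Applying Proposition~\ref{thm:harnack} at the point $z_{j-1}$ and time $\tau_j$ yields
\[
w^\ep(z_{j-1},\tau_{j-1}) = w^\ep(z_{j-1},\tau_j - 1) \leq C \inf_{B_{1/2}(z_{j-1})} w^\ep(\cdot,\tau_j) \leq C\, w^\ep(z_j,\tau_j),
\]
where $C=C(1,\gth_0,L_b,n)$. Iterating from $j=1$ to $j=k$ and using $w^\ep(0,\tau_0)\geq 1$ gives $w^\ep(y,\tau^*)\geq C^{-k}\geq C^{-(2m+1)}$.

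Third, setting $\eta := 1 - C^{-(2m+1)}\in(0,1)$, which depends only on $m,\gth_0,L_b,n$, and translating back to $u^\ep$ through $u^\ep(\sep y,\tau^*) = 1 - w^\ep(y,\tau^*)\leq \eta$ for every $y\in B_m$ and $\tau^*\in(4(m+1),T)$, gives the claim. The only mild subtlety to keep careful track of is the bookkeeping in the chain argument, namely verifying at each step the containment $B_1(z_{j-1})\subset B_{r_0/\sep}$ and the time condition $\tau_j > 2R^2$; both are guaranteed comfortably by the hypotheses $(m+2)\sep\leq r_0$ and $T>4(m+1)$ together with the bound $k\leq 2m+1$, so no serious obstacle arises.
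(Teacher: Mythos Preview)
Your proposal is correct and follows essentially the same approach as the paper: rescale to $v^\ep$, set $w=1-v^\ep$, and iterate the Harnack estimate of Proposition~\ref{thm:harnack} along a chain of overlapping balls starting at the origin. The only cosmetic difference is that the paper applies Proposition~\ref{thm:harnack} with $R=2$ and a chain of $m$ unit balls (obtaining $\eta=1-C^{-m}$), while you use $R=1$ and at most $2m+1$ half-steps (obtaining $\eta=1-C^{-(2m+1)}$); either choice works.
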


\bpr Noting that the function $v^\ep(y,t)=u^\ep(\sep y,t)$ 
is defined on $B_{m+2}\tim(0,\,T)$, we set 
\[
w(x,t)=1-v^\ep(x,t) \ \ \ \text{ for } \ (x,t)\in B_{m+2}\tim(0,\,T).
\]

Observe that $w$ is a solution 
of \eqref{eq:2.1} in $B_{m+2}\tim(0,\,T)$ and, by \eqref{eq:2.2}, that $w$ is a nonnegative function on $B_{m+2}\tim(0,\,T)$ and satisfies 
\[
w(0,t)\geq 1 \ \ \ \text{ for all } \ t\in(0,\,T).
\]

 Let $(x,t)\in B_{m}\tim (4(m+1),\,T)$ and 
choose a finite sequence of balls $B_1(x_1),...,B_1(x_m)\subset B_m$    
so that $x_1=0$, $x\in B_1(x_m)$ and, if $1\leq i<m$, then 
$B_1(x_{i+1})\cap B_1(x_i)\not=\emptyset$. 
Applying  Proposition~\ref{thm:harnack} with $R=2$ yields, for some  $C=C(\gth_0, L_b, n)>1$,
\[ 
w(0,t-4m) 
\leq C \inf_{y\in B_1(x_1)}w(y,t-4(m-1)),
\]
and, hence, if $m=1$,  we have 
\[
w(0,t-4m)\leq C^mw(x,t), 
\]

while, if $m>1$, repeating  the argument above we obtain 
\[\begin{aligned}
w(0,t-4m) 
&\,\leq C w(x_2,t-4(m-1))
\leq C^2\inf_{y\in B_1(x_2)}w(y,t-4(m-2))
\\&\,\leq \cdots \leq C^m \inf_{y\in B_1(x_m)}w(y,t)
\leq C^m w(x,t).
\end{aligned}\]

Thus, 
we have $\,w(0,t-4m)\leq C^mw(x,t)$, and,  
since $\,w(0,t-4m)\geq 1\,$ by \eqref{eq:2.2}, we get 
\[
1\leq C^m (1-v^\ep(x,t)),
\]
which yields
\[
v^\ep(x,t)\leq 1-\fr 1{C^m}, 
\]
and, hence, with   $\eta=1-1/C^m$,
\[
u^\ep(x,t)\leq \eta \quad\text{ for all }(x,t)\in B_{m\sep}\tim(4(m+1),\,T). \qedhere
\]
\epr

\subsection*{The asymptotic constancy} \lab{sec:asymptotic}\lab{sec:origin}
Let $\varPi$ be a relatively open, possibly empty, subset of $\pl\gO$, 
set $\gO^\varPi:=\gO\cup\varPi$, and, 
for any $\gd>0$,
\[
\gO_\gd:=\{x\in\lbar\gO\mid \dist(x,\pl\gO)>\gd\}
\ \ \text{ and } \ \ 
\gO^\Pi_\gd:=\{x\in\lbar\gO\mid\dist(x,\pl\gO\setminus\Pi)>\gd\}. 
\] 

 The next result is the first indication of what we call asymptotic 
constancy, which is a straightforward 
generalization of \cite{IS2015}*{Theorem 14}. 
Roughly it says that, for $\ep $ small, if a solution of \eqref{eq:l-pde} is bounded  and small (say negative) in a small cylinder around 
the positive time axis $l$
and a portion of the parabolic boundary, then it is small (of order $\delta>0$) 
in a large part of $Q$ after some uniform time depending on $\delta$.

\begin{prop}\lab{thm:spreading}
Assume \eqref{omega}, \eqref{blip}, \eqref{g-astable}, \eqref{b-inward}, \eqref{borigin} and \eqref{takis1} and fix $\gd\in (0,\,r_0)$. There exist $T_\gd>0$ and $\ep_0\in(0,\,1)$, 
which depend only on $\gd$, $\gth_0$, $b$, $\Pi$ and $\gO$, such that, if,    
for  $\ep\in(0,\,\ep_0)$,  $u^\ep\in C(\lbar Q) \cap C^{2,1}( Q)$ is a solution of \eqref{eq:l-pde} 
and satisfies, for some $T(\ep)\in(T_\gd,\,\infty]$,
\[
u^\ep\leq 1 \   \text{ in } \ \gO\,\tim\,[0,\,T(\ep)) \ \ \text{and} \ \  
u^\ep\leq 0  \ \text{ in } \  (B_\gd\cup\Pi) \tim [0,\,T(\ep)),
\]
then
\[
u^\ep(x,t)\leq \gd \ \ \  \text{ for all } \ (x,t)\in\gO^\Pi_\gd\tim[T_\gd,\,T(\ep)). 
\] 
\end{prop}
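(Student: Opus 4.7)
My strategy is a two-stage barrier argument: first, use Lemma~\ref{lem:q} to produce smallness on a fixed ball around the origin in a time independent of $\ep$; second, construct a drift-driven supersolution that propagates this smallness to all of $\gO^\Pi_\gd$ in additional bounded time.

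\emph{Stage~1 (near-origin smoothing).} Apply Lemma~\ref{lem:q} with $R:=\gd/(2\sqrt\ep)$ and $r:=r_0$. Once $\ep_0=\ep_0(\gd)$ is small enough that $R\geq R_0$, the time $\tau_0:=k^{-1}\log(2r_0/\gd)$ is independent of $\ep$, and $q^\ep$ is a supersolution of \eqref{eq:Pucci} on $B_{r_0}\times(0,\tau_0]$; since $\tr[a^\ep X]\leq P^+(X)$, it is also a supersolution of \eqref{eq:l-pde}. The hypotheses $u^\ep\leq 0$ on $B_\gd\supset B_{\sqrt\ep R}$ and $u^\ep\leq 1$ on $\gO$, combined with the lower bounds of $q^\ep$ from Lemma~\ref{lem:q}, give $u^\ep\leq q^\ep$ on the parabolic boundary of $B_{r_0}\times(0,\tau_0]$, and the comparison principle yields $u^\ep(\cdot,\tau_0)\leq q^\ep(\cdot,\tau_0)\leq C_1\ep$ on $B_{r_0/2}$ with $C_1$ depending only on $\gd,b_0,\gth_0,\|h''\|_\infty$. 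Since the assumption $u^\ep\leq 0$ on $B_\gd$ holds at every $t\in[0,T(\ep))$, shifting the time origin gives
\[
u^\ep\leq C_1\ep\quad\text{on } B_{r_0/2}\times[\tau_0,T(\ep)).
\]

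\emph{Stage~2 (drift propagation).} Let $V\in C^2(\lbar\gO)$ be a Lyapunov function for $\dot X=b(X)$ with $V(0)=0$, $V\geq 0$, and $b\cdot DV\leq -\gamma<0$ on $\lbar\gO\setminus B_{r_0/4}$ (available by converse Lyapunov theorems for the globally asymptotically stable origin), and let $\Phi\in C^2(\lbar\gO)$ be a stationary supersolution of \eqref{eq:l-pde} uniformly in $\ep\in(0,\ep_0)$, with $\Phi\geq 0$, $\Phi\geq 1$ on $\partial\gO\setminus\Pi$, and $\Phi\leq\gd/4$ on $\gO^\Pi_{\gd/2}$; such $\Phi$ is built on a thin inward collar of $\partial\gO\setminus\Pi$ using \eqref{b-inward} to ensure $-b\cdot D\Phi\geq c>0$ there (dominating the $O(\ep)$ elliptic correction). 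Choose $M\geq 1$ and $\mu>0$ small enough that $\mu(V_{\max}+M)+\ep_0\gth_0^{-1}\|D^2V\|_\infty\leq\gamma$, and set
\[
W(x,t):=\Phi(x)+\e^{-\mu(t-\tau_0)}\bigl[V(x)+M\bigr]+\frac{\gd}{2}.
\]
A direct computation shows $W$ is a supersolution of \eqref{eq:l-pde} on $(\gO\setminus\lbar B_{r_0/4})\times[\tau_0,\infty)$, and $W\geq u^\ep$ on the parabolic boundary: $W(\cdot,\tau_0)\geq M+\gd/2\geq 1$; $W\geq\gd/2\geq C_1\ep$ on $\partial B_{r_0/4}$ (by Stage~1); $W\geq 0$ on $\Pi$; and $W\geq\Phi\geq 1$ on $\partial\gO\setminus\Pi$. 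The comparison principle yields $u^\ep\leq W$, and $W\leq\gd$ on $\gO^\Pi_\gd\setminus\lbar B_{r_0/4}$ for $t\geq T_\gd:=\tau_0+\mu^{-1}\log(4(V_{\max}+M)/\gd)$. Combined with Stage~1 inside $B_{r_0/4}$, this gives the conclusion.

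\emph{Main obstacle.} The technical heart of the argument is the construction of $\Phi$: matching $\Phi\geq 1$ on $\partial\gO\setminus\Pi$ to $\Phi\leq\gd/4$ in $\gO^\Pi_{\gd/2}$, smoothly, while keeping $\Phi$ a supersolution of \eqref{eq:l-pde} uniformly in $\ep$. Since \eqref{b-inward} controls $b$ only on the boundary, the collar must be chosen thin enough (in terms of the geometry of $\gO$ and $b$) that $b\cdot D\dist(\cdot,\partial\gO\setminus\Pi)>0$ throughout it, and the transition function's $C^2$-norm must be small enough that the $\ep$-elliptic correction is absorbed by $-b\cdot D\Phi$. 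Making all these choices consistently and uniformly in $\ep\in(0,\ep_0)$ is the main novelty beyond Lemma~\ref{lem:q}.
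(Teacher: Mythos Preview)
Your approach is genuinely different from the paper's and worth discussing, but Stage~2 contains a real gap that you yourself flag as the ``main obstacle'' without resolving.

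\textbf{The gap.} The existence of your boundary layer function $\Phi$ is not clear. You want $\Phi\in C^2(\lbar\gO)$ with $\Phi\geq 1$ on $\partial\gO\setminus\Pi$, $\Phi\leq \gd/4$ on $\gO^\Pi_{\gd/2}$, and $-\ep\tr[a^\ep D^2\Phi]-b\cdot D\Phi\geq 0$ throughout $\gO$ uniformly in $\ep\in(0,\ep_0)$. Writing $\Phi=f(d)$ with $d=\dist(\cdot,\partial\gO\setminus\Pi)$ and $f$ decreasing from $1$ to $0$ on $[0,\rho]$, the $C^2$ matching at $d=\rho$ forces $f'(\rho)=f''(\rho)=0$; since $f'$ must vanish at both endpoints while being $\leq 0$ in between, there is necessarily a zone near $d=\rho$ where $|f'|$ is arbitrarily small while $f''>0$. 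There $-b\cdot D\Phi=|f'|\,b\cdot Dd\to 0$ but $\ep\tr[a^\ep D^2\Phi]\geq \ep\gth_0 f''>0$ stays bounded below, and the supersolution inequality fails for every $\ep>0$. The error cannot be absorbed by the Lyapunov term in $W$ either, because the $e^{-\mu(t-\tau_0)}(V+M)$ contribution decays to zero in $t$ while the defect in $\Phi$ is time-independent. A secondary issue: near the relative boundary of $\Pi$ in $\partial\gO$, the distance $d$ is only Lipschitz, so the collar construction breaks down there as well.

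\textbf{How the paper proceeds instead.} The paper avoids any explicit barrier beyond Lemma~\ref{lem:q}. It takes the maximal element $\bar v^\ep$ of the class of subsolutions of the Pucci problem \eqref{eq:Pucci} bounded by $1$ on $\lbar\gO\times[0,T_\gd]$ and by $0$ on $(B_\gd\cup\Pi)\times[0,T_\gd]$, forms the half-relaxed upper limit $U=\limsup^*_{\ep\to 0}\bar v^\ep$, and observes that $U$ is a subsolution of the first-order transport problem \eqref{eq:transport}. The characteristic argument (Lemma~\ref{lem:transport}) then gives $U(\cdot,T_\gd)\leq 0$ on $\gO^\Pi$, where $T_\gd$ is the maximal hitting time of $B_\gd$ for the flow $\dot X=b(X)$. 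The definition of the half-relaxed limit converts this directly into $\bar v^\ep(\cdot,T_\gd)\leq\gd$ on $\gO^\Pi_\gd$ for small $\ep$, and since every time-shift $u^\ep(\cdot,s+\cdot)$ belongs to the admissible class, the conclusion follows. This route needs neither a smooth Lyapunov function (so no converse Lyapunov theorem) nor the collar function $\Phi$; the drift is exploited through the characteristics of the limiting transport equation rather than through an explicit comparison function.

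Your Stage~1 is correct and is a nice use of Lemma~\ref{lem:q}, but it is not needed for the paper's argument, and Stage~2 as written does not close.
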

\smallskip

For the proof of Proposition~\ref{thm:spreading} it is necessary to first describe some preliminary facts that are consequence of the asymptotic stability property of the vector field $b$.   
\smallskip

 We fix $\delta>0$ and set
\[
\tau(x):=\sup\{t\geq 0\mid X(t,x)\not\in B_{\gd}\}\quad 
\text{ for }\ x\in\lbar\gO,  
\]
where $X(t)=X(t,x)$ is the solution of 
\[
\dot X(t;x)=b(X(t;x))\quad\text{ and }\quad X(0;x)=x. 
\]

Since $\gO$ is bounded and the origin is a 
globally asymptotically stable point of $b$,
it is immediate that, if  
\beq\lab{eq:T_gd}
T_\gd:=\sup_{x\in \lbar\gO}\tau(x),
\eeq
then 
\beq
0<T_\delta <\infty  \ 
\text{ and}  \  X(t,x)\in B_{\gd} \ \text{for all } \ (x,t) \in \lbar\gO \times 
[T_\gd,\,\infty).
\eeq

 We consider the transport problem
\beq\lab{eq:transport}
\bcases
U_t\leq b\cdot DU \ \  \ &\text{in} \  \gO\tim(0,\,T_\gd],\wcr
\min\{U_t - b\cdot DU,\, U\}\leq 0
&\text{ on } \  \Pi\tim(0,\, T_\gd],\wcr 
U\leq 0&\text{ in }  B_{\gd}  \times \{0\};
\ecases 
\eeq
the first inequality in \eqref{eq:transport} should be understood in the viscosity 
subsolution sense while the second is a viscosity 
interpretation of the Dirichlet 
condition, $U\leq 0$, on $\Pi$ (see  \cite{Is1989}).   

\begin{lem} \lab{lem:transport}
Assume  
\eqref{omega}, \eqref{blip}, \eqref{g-astable} and \eqref{b-inward}. 
If $U\in\USC(\lbar\gO\tim[0,\,T_\gd])$ is a subsolution of \eqref{eq:transport}, 
then $U(x,T_\gd)\leq 0$ for all $x\in\gO^\Pi$. 
\end{lem}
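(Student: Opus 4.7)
The plan is to propagate the subsolution condition along the forward characteristics of the drift $b$. Fix $x\in\gO^\Pi$ and let $\eta(s):=X(s;x)$ be the forward $b$-trajectory from $x$. By the inward condition \eqref{b-inward}, $\eta(s)\in\lbar\gO$ for all $s\geq 0$ and in fact $\eta(s)\in\gO$ for $s>0$ (even when $x\in\Pi$); by the definition \eqref{eq:T_gd} of $T_\gd$, one has $\eta(T_\gd)\in B_\gd$. If $U$ were $C^1$, the chain rule would give
\[
\frac{d}{ds}U(\eta(s),T_\gd-s)=b\cdot DU-U_t=-(U_t-b\cdot DU)\geq 0,
\]
so $\phi(s):=U(\eta(s),T_\gd-s)$ is non-decreasing on $[0,T_\gd]$, and the initial condition $U\leq 0$ on $B_\gd\times\{0\}$ yields $U(x,T_\gd)=\phi(0)\leq\phi(T_\gd)=U(\eta(T_\gd),0)\leq 0$.

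To make this rigorous for a merely USC viscosity subsolution, I would introduce the bi-Lipschitz self-map $F(y,s):=(X(s;y),T_\gd-s)$ of $\lbar\gO\times[0,T_\gd]$ and set $V:=U\circ F$. A standard test-function computation (using $\tilde\psi:=\psi\circ F$ as a test for $V$ whenever $\psi$ is one for $U$) shows that $V$ is a USC viscosity subsolution of $-V_s\leq 0$ on $\gO\times[0,T_\gd)$ (closed at $s=0$ since the original equation held up to $t=T_\gd$) with the relaxed Dirichlet condition $\min\{-V_s,V\}\leq 0$ on $\Pi\times[0,T_\gd)$. Since the transformed equation has no $y$-derivatives, the problem reduces, for each fixed $y\in\gO^\Pi$, to the one-dimensional fact that a USC function $\phi:[0,T_\gd)\to\R$ satisfying $-\phi'\leq 0$ in the viscosity sense on $[0,T_\gd)$ is non-decreasing. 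I would prove this as follows: if $\phi(0)>\phi(s_2)$ for some $s_2\in(0,T_\gd)$, then for all sufficiently small $\mu>0$ the USC function $\phi(s)+\mu s$ attains its maximum on $[0,s_2]$ at some $s_0\in[0,s_2)$, and the viscosity inequality at $s_0$ (valid because the equation is assumed on the closed-at-$0$ interval $[0,T_\gd)$) gives $\mu\leq 0$, a contradiction.

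The main obstacle is the boundary case $x\in\Pi$, where only the relaxed Dirichlet condition is available at $s=0$. I would handle it by dichotomy: if $V(x,0)=U(x,T_\gd)\leq 0$, we are done; otherwise $V(x,0)>0$, and the relaxed condition $\min\{-V_s,V\}\leq 0$ at $(x,0)$ forces the genuine subsolution inequality $-V_s\leq 0$ to hold at $(x,0)$. Combined with the interior subsolution on $\{x\}\times(0,T_\gd)$ (valid since $\eta(s)\in\gO$ for $s>0$), the one-dimensional monotonicity then applies on the whole of $[0,T_\gd)$ for $\phi(s):=V(x,s)$, and USC of $V$ at $s=T_\gd$ yields $V(x,0)\leq\limsup_{s\to T_\gd^-}V(x,s)\leq V(x,T_\gd)\leq 0$, contradicting $V(x,0)>0$. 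In either case $U(x,T_\gd)\leq 0$ for every $x\in\gO^\Pi$, completing the proof.
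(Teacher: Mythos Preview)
Your strategy---restrict $U$ to the forward $b$-characteristic through $x$ and reduce to a one-dimensional monotonicity statement---is exactly what the paper does. The paper sets $u(t)=U(X(T_\gd-t,x),t)$ (your $\phi$ with time reversed), invokes a separate lemma (Lemma~\ref{lem:A-transport}, proved in Appendix~A) to show that $u$ is a viscosity subsolution of $u'\leq 0$ on $(0,T_\gd]$ (with the relaxed alternative at $t=T_\gd$ when $x\in\Pi$), and then runs the same contradiction with a linear perturbation $u(t)-\ga t$.

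Where your sketch is incomplete is precisely the step you label a ``standard test-function computation''. Your map $F(y,s)=(X(s;y),T_\gd-s)$ is only bi-Lipschitz, not $C^1$, because $b$ is merely Lipschitz (assumption~\eqref{blip}); hence composing a $C^1$ test function for $V$ with $F^{-1}$ does not in general produce a $C^1$ test function for $U$. (Incidentally, $F$ need not be onto either, since the backward flow can exit $\lbar\gO$.) Moreover, even granting that $V$ is a $(y,s)$-viscosity subsolution of $-V_s\leq 0$, the further restriction ``the transformed equation has no $y$-derivatives, so the problem reduces for each fixed $y$'' is not automatic: a one-variable test function $\chi(s)$ touching $V(y_0,\cdot)$ from above need not touch $V$ from above as a function of $(y,s)$. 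The paper handles both issues at once: the proof of Lemma~\ref{lem:A-transport} takes a one-variable test function $\phi(t)$ for $u$ and penalizes the distance to the characteristic, maximizing $U(x,t)-\phi(t)-\ga|x-X(T_\gd-t,z)|^2$; as $\ga\to\infty$ the Lipschitz bound on $b$ is enough to recover $\phi'(\hat t)\leq 0$. Your argument becomes rigorous once this penalization replaces the direct change of variables; the rest of your outline (the dichotomy at $x\in\Pi$, the linear-perturbation contradiction) then matches the paper's proof.
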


\bpr Fix  $x\in\gO^\Pi$ and, for  $t\in[0,\,T_\gd]$, set 
\[
u(t)=U(X(T_\gd-t,x),t).
\] 

It is a standard observation (see Lemma~\ref{lem:A-transport} in Appendix~A)  
that $u\in\USC([0,\,T_\gd])$ is a subsolution,  if $x\in\gO$,  of 
\beq\lab{eq:visco1}
u'\leq 0 \quad\text{ in }\ (0,\,T_\gd],
\eeq
and, if $x\in\Pi$, of 
\beq\lab{eq:visco2}\bcases
u'\leq 0 \qquad \text{ in }\ (0,\,T_\gd),&\wcr
u'\leq 0 \ \ \text{ or } \ \ u\leq 0 \quad\text{ on  } \ \{T_\gd\}. 
\ecases
\eeq

 Suppose that
$\max_{[0,\,T_\gd]}u>0$. Since $X(T_\gd,x)\in B_\gd$ 
and $u(0)=U(X(T_\gd,x),0)\leq 0$, there must exist $\ga>0$ and $\tau\in(0,\,T_\gd]$ \ such that the function $[0,\,T_\gd] \owns t \to u(t)-\ga t$  attains its 
maximum on $[0,\,T_\gd]$ at $\tau$.  
In view of \eqref{eq:visco1}, if $x\in\gO$, then 
$\ga\leq 0$, which is a contradiction. If $x\in\Pi$, then 
either $\ga\leq 0$ or $\tau=T_\gd$ and $u(T_\gd)\leq 0$, which is again a  contradiction. 
Thus, we conclude that $u\leq 0$ on $[0,\,T_\gd]$. In particular, 
$u(T_\gd)\leq 0$, which shows that $U(x,T_\gd)\leq 0$ for all $x\in\gO^\Pi$.  
\epr

 We proceed with the proof of Proposition~\ref{thm:spreading}.

\bpr[Proof of Proposition~\ref{thm:spreading}]  Let $T_\gd>0$ be the number 
defined by \eqref{eq:T_gd}.  
For any $\ep\in(0, 1)$, let $\cV_\ep$ denote the set of all (viscosity) 
subsolutions $v\in \USC(\lbar\gO\tim[0,\,T_\gd])$ of
\eqref{eq:Pucci} such that  
\beq
v \leq 1 \  \text{ on } \  \lbar \gO \tim [0,\,T_\gd] \ \ \text{and} \ \ v\leq 0 \  \text{ on } \  (B_\gd\cup\Pi) \tim [0,\,T_\gd],
\eeq 
and note that $\cV_\ep$, which is clearly nonempty, 
 depends only on $\gd$, $T_\gd$, $\gth_0$, $b$, $\Pi$ 
and $\gO$.  
\smallskip

 It turns out that $\cV_\ep$ has a maximum element. Indeed, for $(x,t) \in \lbar\gO\tim[0,\,T_\gd]$, set
\[
v^\ep(x,t):=\sup\{v(x,t)\,:\, v\in\cV_\ep\}
\] 
and consider its upper semicontinuous envelope 
\[
\bar v^\ep(x,t):=\lim_{r\to 0}\sup\{v^\ep(y,s)\,:\, 
(y,s)\in\lbar\gO\tim[0,\,T_\gd], \,
|(y,s)-(x,t)|<r\}.
\] 

Standard arguments from the theory of viscosity solutions   yield that  
$\bar v^\ep\in \cV_\ep$ and, since $0\in\cV_\ep$, 
$\bar v^\ep \geq 0$ on  $\lbar\gO\tim[0,\,T_\gd]$. 
\smallskip

 Let $U \in \USC(\lbar \gO\tim[0,\, T_\gd])$ be  
the half-relaxed upper limit of $\bar v^\ep$, that is, for   $(x,t) \in \lbar\gO\tim[0,\,T_\gd]$, 
\[
U(x,t):=\limsup_{\ep\to 0}{\!}^* \bar v^\ep(x,t); 
\]
 we refer to Crandall, Ishii and Lions \cite{CIL1992} for more discussion about the half relaxed upper and lower limits.
\smallskip
 
It follows from Lemma~\ref{lem:transport} that 
$\,U(x,T_\gd)\leq 0  \  \text{ for all } \ x\in \gO^\Pi,$
and, hence, in view of the uniformity encoded in the definition of $U$,  there exists a constant $\ep_0\in(0,\,1)$,   
depending only on $\gd$, $\gth_0$, $b$, $\Pi$ and $\gO$,  
such that, for all  $\ep\in(0,\,\ep_0)$,
\[
v^\ep(\cdot, T_\gd)\leq \gd \ \text{ on } \  \gO^\Pi_\gd. 
\]

Finally, since, for each $\ep$, the function   
$\,\lbar\gO\tim[0,\,T_\gd]\ni (x,t)\mapsto u^\ep(x,s+t),$ 
with $0\leq s<T(\ep)-T_\gd$,  belongs to $\cV_\ep$, 
it follows that,  if $s\in[0,\,T(\ep)-T_\gd)$, then 
\[
u^\ep(x,s+T_\gd)\leq v^\ep(x,T_\gd)\leq \gd \ \ \ \text{ for all } \ x\in \gO^\Pi_\gd \ \text{ and } \ \ep\in(0,\ep_0], 
\]
and, thus, 
\[
u^\ep(x,t)\leq \gd \ \  \ \text{ for all }(x,t)\in\gO^\Pi_\gd\tim[T_\gd,\,T(\ep))
\ \text{ and } \ \ep\in(0,\,\ep_0].  \qedhere
\]
\end{proof}
\smallskip

  Next we use Corollary~\ref{cor:harnack} and the previous proposition to obtain a refinement. Here we assume an upper bound, say $1$,  only in a cylindrical neighborhood of  
the positive time axis $l$ 
and show that, if, in addition, the solutions are small, say less than $0$ on 
the half line $l$, then they are small, say less than $\delta$,  
after a time, of order $|\log\ep|$, in  
a small cylindrical neighborhood of $l$. We remark 
that a time period 
of order $|\log\ep|$ is ``very short'' in the logarithmic scale of time,
that is, as $\ep \to 0$, 
if $\exp(\gl_\ep/\ep)=O(|\log\ep|)$, then $\gl_\ep \to 0$.    
  
\begin{prop}\lab{thm:con-origin1} Assume \eqref{omega}, \eqref{blip}, \eqref{g-astable}, \eqref{borigin} and \eqref{takis1}. For any $\gd>0$, there exist 
 $\ep_0 \in(0,\,1)$
and a family $\{\tau(\ep)\}_{0<\ep\leq\ep_0}\subset (0,\,\infty)$, both depending on $r_0$, $\gth_0$, 
$b$,  $\gd$ and $n$,  and $\gamma\in(0,\,1)$, 
such that, if, for  $\ep\in(0,\,\ep_0]$,  
 $u^\ep$ is a solution of \eqref{eq:l-pde} with the property that,   for some $T(\ep)\in(\tau(\ep),\,\infty]$,
\beq \lab{eq:Upton}
u^\ep \leq 1 \  \text{ in  } 
B_{r_0}\tim (0, T(\ep)) \ \  \text{and} \ \ 
u^\ep(0,t)\leq 0  \ \text{ for all } \  t\in(0,\,T(\ep)), 
\eeq
then 
\[
u^\ep \leq \gd \ \  \text{ in  } \ \  B_{\gamma r_0}\tim(\tau(\ep),\,T(\ep)). 
\]
Moreover, there exists a constant $C>0$, which depends  on $r_0$, $\gth_0$, 
$b$, $\gd$ and $n$, such that
\[
\tau(\ep)\leq C(|\log\ep|+1) \ \ \ \text{ for all } \ \ep\in(0,\,\ep_0].
\] 
\end{prop}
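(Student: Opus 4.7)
The plan is to iterate a two-step procedure. Starting from $u^\ep\le 1$ in $B_{r_0}\times(0,T(\ep))$ with $u^\ep(0,t)\le 0$, one (i) applies Corollary~\ref{cor:harnack} to obtain a strict oscillation improvement $u^\ep\le\eta<1$ on a microscopic ball $B_{m\sep}$ after $O(1)$ time, and (ii) uses the supersolution $q^\ep$ of Lemma~\ref{lem:q} as a barrier to propagate this improved bound out to a macroscopic ball $B_{r/2}$ after additional time of order $|\log\ep|$. The composite step contracts the sup-norm by a fixed factor $\kappa<1$ while halving the working radius; iterating $O(\log(1/\gd))$ times then drives the bound below $\gd$ on a ball of radius $\gamma r_0=2^{-j}r_0$, where $j$ is the number of iterations performed.

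Concretely, suppose inductively that $u^\ep\le\sigma$ on $B_r\times(T_0,T(\ep))$ with $r\in(0,r_0]$ and $\sigma\in(0,1]$, and renormalize $\tilde u:=u^\ep/\sigma$ so that $\tilde u(0,t)\le 0$ and $\tilde u\le 1$ on $B_r$. Fix an integer $m$ with $m\ge R_0$ and $m^2>C_0:=\|h''\|_{L^\infty}/(b_0\gth_0)$; provided $(m+2)\sep\le r$, Corollary~\ref{cor:harnack} applied to $\tilde u$ yields $\tilde u\le \eta:=1-C^{-m}$ on $B_{m\sep}\times(T_0+4(m+1),T(\ep))$. Now take $R:=m$ in the construction of $q^\ep$ (cf.\ Lemma~\ref{lem:q}, with $r$ the current radius) and set $w^\ep(x,t):=\eta+(1-\eta)\,q^\ep(x,t-T_0-4(m+1))$. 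The four properties of $q^\ep$ collected in Lemma~\ref{lem:q} match precisely the parabolic boundary of $B_r\times(T_0+4(m+1),T_0+4(m+1)+\tau(\ep))$: on $B_{R\sep}$ at the initial time one has $q^\ep(\cdot,0)\ge 0$, hence $w^\ep\ge\eta\ge\tilde u$; on $B_r\setminus B_{R\sep}$ at the initial time $q^\ep(\cdot,0)\ge 1$ gives $w^\ep\ge 1\ge\tilde u$; and on the lateral boundary $\pl B_r\times[T_0+4(m+1),T_0+4(m+1)+\tau(\ep)]$ the bound $q^\ep\ge 1$ again yields $w^\ep\ge 1\ge\tilde u$. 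Since $w^\ep$ is a supersolution of \eqref{eq:Pucci} and the Pucci operator dominates $\tr[a^\ep(\cdot,\cdot)\,\cdot]$, $w^\ep$ is also a supersolution of \eqref{eq:l-pde}; the parabolic comparison principle therefore gives $\tilde u\le w^\ep$ inside. Evaluating at $t=T_0+4(m+1)+\tau(\ep)$ on $B_{r/2}$ and using $q^\ep(\cdot,\tau)\le C_0/m^2$ produces $\tilde u\le \kappa:=\eta+(1-\eta)C_0/m^2$; since $m^2>C_0$ we have $\kappa<1$, and thus $u^\ep\le\sigma\kappa$ on $B_{r/2}\times(T_0+4(m+1)+\tau(\ep),T(\ep))$.

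Iterating this composite step $j$ times starting from $(\sigma,r)=(1,r_0)$ yields $u^\ep\le\kappa^j$ on $B_{r_0/2^j}\times(T_j,T(\ep))$, with $T_j\le j(4(m+1)+\tau(\ep))$. Choosing $j:=\lceil\log(1/\gd)/\log(1/\kappa)\rceil$ and $\gamma:=2^{-j}$, and taking $\ep_0$ so small that $(m+2)\sqrt{\ep_0}\le \gamma r_0$ (ensuring Corollary~\ref{cor:harnack} applies at every step), yields the desired bound. The logarithmic dependence $\tau(\ep)=k^{-1}\log(r/(m\sep))$ from Lemma~\ref{lem:q} gives $T_j\le C(r_0,\gth_0,b,\gd,n)(1+|\log\ep|)$, which is the claimed rate. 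The principal subtlety is aligning the two length scales at each iteration: the choice $R=m$ in $q^\ep$ is exactly what synchronizes the Harnack-improved bound $\tilde u\le\eta$ on $B_{m\sep}$ with the initial shape of the barrier (which requires control on $B_{R\sep}$), so that the parabolic-boundary comparison closes cleanly. A secondary point is verifying that, at the end of the iteration, the shrinking radius $r_0/2^j$ still accommodates the microscopic ball $B_{(m+2)\sep}$; this is what fixes the quantitative choice of $\ep_0$ in terms of $\gd$.
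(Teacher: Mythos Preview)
Your proposal is correct and follows essentially the same iterative scheme as the paper: apply Corollary~\ref{cor:harnack} to drop the bound to $\eta$ on $B_{m\sep}$, then use the barrier $q^\ep$ of Lemma~\ref{lem:q} with $R=m$ to push this out to $B_{r/2}$, and iterate with halved radius until the contraction factor $\kappa^{j}\le\gd$. The only cosmetic difference is that the paper normalizes via $v^\ep=(1-\eta)^{-1}(u^\ep-\eta)$ and compares directly with $q^\ep$ (fixing $m$ so that $\|h''\|_{L^\infty}/(b_0\gth_0 m^2)\le 1/2$, whence $\kappa=(1+\eta)/2$), whereas you compare $\tilde u$ with $\eta+(1-\eta)q^\ep$; these are equivalent affine reformulations.
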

\smallskip

 Although it appears similar, Proposition~\ref{thm:con-origin1} is actually very different from  \cite{IS2015}*{Theorem 13}.
Indeed the second condition in \eqref{eq:Upton} on the solutions  
is required only at the origin, while  
in \cite{IS2015}*{Theorem 13} it is assumed  on a neighborhood of the origin. 
This refinement, which  is important for the proofs of 
Propositions~\ref{thm:A}, \ref{thm:B} and \ref{thm:C},   
depends technically on the barrier functions $q^\ep$ in Lemma~\ref{lem:q} and 
the Harnack inequality (Proposition~\ref{thm:harnack}).

\bpr[Proof of Proposition~\ref{thm:con-origin1}]  
To simplify the argument, we assume that $T(\ep)=\infty$ since the general case 
can be treated similarly. 
\smallskip

 Fix $\gd>0$, choose $h\in C^2([0,\,\infty))$ satisfying \eqref{eq:h} and 
$m=m(\gth_0,n,\|h''\|_{L^\infty})\in\N$ 
such that
\[
\fr{\|h''\|_{L^\infty}}{b_0\gth_0 m^2}\ \leq\ \fr 12 
\quad\text{ and }\quad m\geq \fr{2\sqrt{2n}}{\sqrt{b_0\gth_0}}, 
\]
 let $\eta=\eta(\gth_0,L_b,n)\in(0,\,1)$ be 
the constant in Corollary \ref{cor:harnack},
set $\tau_0=4(m+1)$ and fix $\ep_1=\ep_1(r_0,m)\in(0,\,1)$ so that 
\[
(m+2)\sqrt{\ep_1}\leq r_0. 
\]

 Then, for any $\ep\in(0,\,\ep_1]$,
Corollary \ref{cor:harnack} gives 
\[
u^\ep(x,t)\leq \eta \ \ \ \text{ for all } \ (x,t)\in B_{m\sep }
\tim(\tau_0,\,\infty). 
\]

Define
\[
v^\ep:=(1-\eta)^{-1}\left(u^\ep-\eta\right)
 \  \text{ in }  \ \gO\tim[0,\,\infty),
\]
and note that $v^\ep$ is a solution of \eqref{eq:l-pde}, and, moreover, 
\[
v^\ep \leq 1 \  \text{ in } \  B_{r_0}\tim (0,\infty) \ \  \text{ and} \ \  v^\ep\leq 0 \ \text{ on } \  \lbar B_{m\sep}\tim [\tau_0,\,\infty).
\]

 Let $q^\ep$ be given by \eqref{eq:q} with $R$ and $r$ replaced by $m$ 
and $r_0$ respectively. 
 It follows from  Lemma~\ref{lem:q} and the comparison principle that, for any fixed $s\geq \tau_0$, 
\[
v^\ep(\cdot,s+\cdot)\leq q^\ep   \ \text{ in } \  B_{r_0}\tim[0,\,\tau_1],
\] 
where $\tau_1=\tau_1(\ep)>0$ is  given by 
\[
\fr{\gth_0\tau_1}{2}=\log\left(\fr{r_0}{m\sep}\right).
\]

Hence, 
\[
v^\ep(\cdot, \cdot +\tau_1)\leq \fr{\|h''\|_{L^\infty}}{b_0\gth_0 m^2}\leq \fr 12 
\  \text{ in } \  B_{r_0/2}\tim [\tau_0,\,\infty),
\]
which, with $T_1(\ep):=\tau_0+\tau_1(\ep)$, can be rewritten as 
\beq\lab{eq:induction1}
u^\ep \leq  \eta+\fr{1-\eta}{2}=\fr{1}{2}(1+\eta) \  \text{ in } \  B_{r_0/2}\tim[T_1(\ep),\,\infty).
\eeq

 Next, for $j=2,3,...$, 
we choose $\ep_j\in(0,\,\ep_{j-1})$ so that 
\[
(m+2)\sqrt{\ep_j}\leq \fr{r_0}{2^{j-1}},
\] 
and, for any $\ep\in(0,\,\ep_j)$, select  
$\tau_j=\tau_j(\ep) > \tau_{j-1}(\ep)$ so that 
\[
\fr{\gth_0\tau_j(\ep)}{2}=\log\left(\fr{r_0}{2^{j-1}m\sep}\right),  
\]
and set, for  $ \ep\in(0,\,\ep_j)$, 
\[
T_j(\ep):=T_{j-1}(\ep)+\tau_0+\tau_j(\ep)=j\tau_0+\sum_{i=1}^j\tau_i(\ep). 
\]

 We prove by induction that   
\beq\lab{eq:inductionj}
u^\ep\leq \left(\fr{1+\eta}{2}\right)^j \ \text{ in } \  B_{r_0/2^j}\tim [T_j(\ep),\,\infty). 
\eeq

  Since  \eqref{eq:induction1} yields  that \eqref{eq:inductionj} holds for $j=1$, we 
assume that \eqref{eq:inductionj} is valid for some $j\in\N$, 
set 
\[
w^\ep:=\left(\fr{2}{1+\eta}\right)^{j}u^\ep(\cdot, \cdot+T_j(\ep)) 
\ \text{ in } \  Q, 
\]
observe  that $w^\ep$ is a solution of \eqref{eq:l-pde}, with 
$a^\ep(\cdot, \cdot)$ replaced by $a^\ep(\cdot, \cdot+T_j(\ep))$ and satisfies
$w^\ep(0,t)\leq 0$ for all $t\in[0,\,\infty)$ and $w^\ep\leq 1$ in  $B_{r_0/2^j}\tim[0,\infty)$.
\smallskip

Using Lemma~\ref{lem:q} and Corollary~\ref{cor:harnack} as before, 
with the same $m$ and $\tau_0$, but with 
$u^\ep$, $r_0$ and $\tau_1$ replaced by $w^\ep$, $r_0/2^{j}$ and $\tau_{j+1}$ 
respectively, 
we  obtain 
\[
w^\ep \leq \fr{1+\eta}{2} \  \text{ in } \  B_{r_0/2^{j+1}}\tim(\tau_0+\tau_{j+1}(\ep),\,\infty), 
\] 
which, after been rewritten as 
\[
u^\ep \leq \left(\fr{1+\eta}{2}\right)^{j+1} 
\  \text{ in} \  B_{r_0/2^{j+1}}
\tim[T_{j+1}(\ep),\,\infty),
\]
yields the claim.  
\smallskip

 Finally, selecting $j\in\N$ so that 
\[
\left(\fr{1+\eta}{2}\right)^{j}\leq\gd,
\]
setting $\ep_0=\ep_j$, $\gamma=2^{-j}$ and $\tau(\ep)=T_j(\ep)$, 
and observing that, as $\ep\to 0+$, $\tau(\ep)=O(|\log \ep|)$ we complete 
the proof. 
\epr
\smallskip

 We have by now completed all the technical steps needed for the next theorem, which is a nontrivial refinement  of Proposition~\ref{thm:spreading}. It asserts that bounded solutions 
to \eqref{eq:l-pde},  which are small on the 
positive time axis $l$ and a part of the parabolic boundary, are actually small in almost the whole domain after some time of order 
$|\log \ep|$. This is the mathematical statement of what we called asymptotic constancy.

\begin{thm}\lab{thm:conclusion}  Assume \eqref{omega}, \eqref{blip}, \eqref{g-astable}, \eqref{b-inward}, \eqref{borigin} and \eqref{takis1} and let $\{T(\ep)\}_{\ep\in(0,\,1)}$ be a collection of  
positive numbers. For each $\gd>0$ and $C_0>0$, there exist constants 
$\ep_0\in(0,\,1)$ and $C>0$ such that, 
if,  for $\ep\in(0,\,\ep_0]$, 
$u^\ep\in C^{2,1}(Q)$ is a solution of \eqref{eq:l-pde} satisfying  
\[
u^\ep \leq C_0  \ \text{ in } \ \gO\tim [0,\,T(\ep)) \ \ \text{and} \ \  u^\ep \leq 0 \ \text{ in} \ (\{0\} \cup \Pi) \times [0,\,T(\ep)),
\]
then 
\[
u^\ep(x,t)\leq \gd \ \ \ \text{ for all  $(x,t)\in\gO^\Pi_{\gd}\tim (C|\log\ep|,\,T(\ep))$.} 
\] 
\end{thm}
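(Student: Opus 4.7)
The proof plan is to chain Propositions \ref{thm:con-origin1} and \ref{thm:spreading}. Proposition \ref{thm:con-origin1} upgrades the single-point hypothesis $u^\ep(0,\cdot)\leq 0$ into a uniform smallness bound on a cylindrical neighborhood $B_{\gamma r_0}\times(\tau(\ep),T(\ep))$ of the positive time axis, at time cost $\tau(\ep)=O(|\log\ep|)$. Proposition \ref{thm:spreading} then takes a smallness bound holding on $B_{\gd'}\cup\Pi$ and spreads it over the bulk set $\gO^\Pi_{\gd'}$, at an extra time cost of order one. Since \eqref{eq:l-pde} is linear, its solution class is preserved under the affine transformations $u\mapsto c_1(u-c_2)$ with $c_1>0$, so normalization and translation may be used freely to convert the output of the first proposition into admissible input for the second.

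In detail, I may assume $0<\gd<\min\{r_0,C_0\}$, since shrinking $\gd$ only strengthens the conclusion. Applying Proposition \ref{thm:con-origin1} to the normalized solution $\tilde u^\ep:=u^\ep/C_0$ (which satisfies $\tilde u^\ep\leq 1$ in $B_{r_0}\times(0,T(\ep))$ and $\tilde u^\ep(0,\cdot)\leq 0$) with its smallness parameter equal to $\gd/(2C_0)$ yields $\ep_1\in(0,1)$, $\gamma\in(0,1)$, $C_1>0$, and a family $\tau(\ep)\leq C_1(|\log\ep|+1)$, all depending only on the data listed in the theorem, such that
\[
u^\ep\leq \gd/2 \ \text{ in } \ B_{\gamma r_0}\times(\tau(\ep),T(\ep)).
\]

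Next, set $\gd_\star:=\min\{\gd,\gamma r_0,\gd/(2C_0)\}\in(0,r_0)$ and define
\[
v^\ep(x,t):=\frac{u^\ep(x,t+\tau(\ep))-\gd/2}{C_0} \quad\text{on}\quad \lbar\gO\times[0,T(\ep)-\tau(\ep)).
\]
Then $v^\ep$ solves \eqref{eq:l-pde} with coefficient $a^\ep(\cdot,\cdot+\tau(\ep))$, satisfies $v^\ep\leq 1-\gd/(2C_0)<1$ on $\gO\times[0,T(\ep)-\tau(\ep))$, and, by the previous step together with the hypothesis $u^\ep\leq 0$ on $\Pi$, also satisfies $v^\ep\leq 0$ on $(B_{\gd_\star}\cup\Pi)\times[0,T(\ep)-\tau(\ep))$. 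Proposition \ref{thm:spreading} applied with parameter $\gd_\star$ produces $T_{\gd_\star}>0$ and $\ep_2\in(0,1)$ such that $v^\ep\leq \gd_\star$ on $\gO^\Pi_{\gd_\star}\times[T_{\gd_\star},T(\ep)-\tau(\ep))$ whenever the latter is nonempty. Undoing the scaling and using $\gd_\star\leq \gd/(2C_0)$ and $\gd_\star\leq\gd$ (so $\gO^\Pi_{\gd_\star}\supset\gO^\Pi_\gd$) gives $u^\ep\leq\gd$ on $\gO^\Pi_\gd\times[\tau(\ep)+T_{\gd_\star},T(\ep))$. Taking $\ep_0:=\min\{\ep_1,\ep_2\}$ and any $C>0$ large enough that $C_1(|\log\ep|+1)+T_{\gd_\star}\leq C|\log\ep|$ for all $\ep\in(0,\ep_0]$ (which also renders the conclusion vacuous in the degenerate case $T(\ep)\leq\tau(\ep)+T_{\gd_\star}$) furnishes the required constants.

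The only delicate point is the triple constraint on $\gd_\star$ in the last step: it must simultaneously be at most $\gamma r_0$ (so that the conclusion of Step 2 forces $v^\ep\leq 0$ on $B_{\gd_\star}$), at most $\gd/(2C_0)$ (so that $v^\ep\leq\gd_\star$ translates back to $u^\ep\leq\gd$), and at most $\gd$ (so that $\gO^\Pi_{\gd_\star}\supset\gO^\Pi_\gd$). Because $\gamma$ is produced by Proposition \ref{thm:con-origin1} in response to the pre-chosen smallness parameter $\gd/(2C_0)$, the ordering is consistent and no genuine obstruction arises; everything else is routine chaining of the two prior results.
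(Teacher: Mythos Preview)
Your proof is correct and follows essentially the same two-step chaining as the paper: first invoke Proposition~\ref{thm:con-origin1} to pass from the pointwise bound at the origin to a bound on a small ball, then invoke Proposition~\ref{thm:spreading} (after an affine renormalization and time shift) to spread this over $\gO^\Pi_\gd$. The paper's own proof is somewhat terser and defines $v^\ep:=C_0^{-1}(u^\ep(\cdot,\cdot+C_1|\log\ep|)-\gd)$ rather than subtracting $\gd/2$, but the logic is identical; your explicit introduction of $\gd_\star=\min\{\gd,\gamma r_0,\gd/(2C_0)\}$ just makes transparent the parameter compatibility that the paper leaves implicit.
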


\bpr 
Proposition~\ref{thm:con-origin1} yields constants 
$\ep_1,\,\gamma\in(0,\,1)$ and $C_1>0$ such  that, 
for all $0<\ep\leq \ep_1$, 
\[
u^\ep\leq \fr{\gd}2 \ \text{ in  } \  B_{\gamma r_0}\tim[C_1|\log \ep|,\, T(\ep)).  
\]

Proposition~\ref{thm:spreading} applied to $v^\ep(x,t):=C_0^{-1}(u^\ep(x,t+C_1|\log\ep|)-\gd)$ instead $u^\ep$ 
implies the existence of $T_\delta$ and $\ep_0 \in (0,\ep_1)$ such that,  for any $\ep\in(0,\,\ep_0)$,  
\beq\lab{assertion2}
v^\ep\leq \fr \gd{2C_0} \ \text{ in  } \ 
\gO^\Pi_\gd\tim[T_\gd,\,T(\ep)-C_1|\log\ep|),
\eeq
which says that, for any $\ep\in(0,\ep_0)$, 
\[
u^\ep \leq \gd \ \text{ in } \ \gO^\Pi_\gd\tim[T_\gd+C_1|\log\ep|,\,T(\ep)),
\]
and the proof is complete.
\epr

 Next we use the last result  to control the difference between values of $u^\ep(\cdot, t)$ and $u^\ep(0,t)$. 

\begin{thm}\lab{thm:conclusion3}  Assume \eqref{omega}, \eqref{blip}, \eqref{g-astable}, \eqref{b-inward}, \eqref{borigin} and \eqref{takis1}. For each  $\gd>0$ and $C_0>0$ there exist
constants $\ep_0\in(0,\,1)$ and $C>0$ such that, if, for $\ep\in(0,\,\ep_0]$,  $u^\ep$ is a solution 
of \eqref{eq:l-pde} satisfying
\[
|u^\ep|\leq C_0 \  \text{ in } \ \gO\tim[0,\,\infty),
\]
then 
\[
|u^\ep(x,t)-u^\ep(0,t)|\leq \gd  \ \ \text{ for all  }\ (x,t)  \in 
\gO_\gd\tim [C|\log\ep|,\,\infty).
\]
\end{thm}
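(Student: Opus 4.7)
The strategy is to derive upper and lower bounds for $u^\ep(x,t)$ in terms of values of $u^\ep(0,\cdot)$ on a short time window by applying Theorem~\ref{thm:conclusion} twice, and then to control the temporal oscillation of $u^\ep(0,\cdot)$ using the iterated Harnack/barrier machinery developed earlier in the section.

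Fix $\gd>0$ and $C_0>0$; without loss of generality we may assume $\gd<\dist(0,\pl\gO)$, so that $0\in\gO_\gd$. Let $C^{*}>0$ be the constant produced by Theorem~\ref{thm:conclusion} applied with $\gd/2$ and $2C_0$ in place of $\gd,\,C_0$ and with $\Pi=\emptyset$, and set $T:=2C^{*}|\log\ep|$. For $t\geq T$ define
\[
\gb_+(t):=\sup_{s\in[t-T,\,t]}u^\ep(0,s),\qquad \gb_-(t):=\inf_{s\in[t-T,\,t]}u^\ep(0,s).
\]
The time-shifted function $w^\ep_+(x,\tau):=u^\ep(x,\tau+t-T)-\gb_+(t)$ solves \eqref{eq:l-pde} on $\gO\tim[0,\,T]$, satisfies $|w^\ep_+|\leq 2C_0$, and $w^\ep_+(0,\cdot)\leq 0$ on $[0,\,T]$. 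Applying Theorem~\ref{thm:conclusion} with $T(\ep)=T$ and letting $\tau\to T^{-}$ by continuity, we obtain $u^\ep(x,t)\leq \gb_+(t)+\gd/2$ on $\gO_\gd$. A symmetric argument applied to $-u^\ep$ yields $u^\ep(x,t)\geq \gb_-(t)-\gd/2$. Combined with $\gb_-(t)\leq u^\ep(0,t)\leq \gb_+(t)$ these give
\[
|u^\ep(x,t)-u^\ep(0,t)|\leq \gb_+(t)-\gb_-(t)+\gd/2 \qquad\text{for all } x\in\gO_\gd.
\]

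It remains to show that the temporal oscillation $\omega^\ep(t):=\gb_+(t)-\gb_-(t)$ of $u^\ep(0,\cdot)$ over $[t-T,\,t]$ does not exceed $\gd/2$ once $\ep$ is sufficiently small. The idea, in the spirit of Proposition~\ref{thm:con-origin1} but tracking two-sided oscillations rather than one-sided bounds, is that after the rescaling $y=x/\sep$ the function $v^\ep(y,\tau):=u^\ep(\sep y,\tau)$ satisfies a uniformly parabolic PDE with bounded drift on $B_{r_0/\sep}$; iterating the Harnack inequality of Proposition~\ref{thm:harnack} and combining with the barriers $q^\ep$ of Lemma~\ref{lem:q} produces a vanishing bound on $\omega^\ep(t)$ as $\ep\to 0$.

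The main obstacle is precisely this last step. A naive application of interior parabolic regularity gives only a H\"older modulus for $u^\ep(0,\cdot)$ in time which, evaluated on an interval of length $|\log\ep|$, does not vanish with $\ep$. The resolution requires chaining the improvement-of-oscillation across many scales, exploiting that in rescaled variables the equation is uniformly parabolic on balls of radius $r_0/\sep\to\infty$ and that the inward drift supplies the additional decay mechanism encoded in the barriers of Lemma~\ref{lem:q}; this is what ultimately allows $\omega^\ep(t)$ to be driven below $\gd/2$ despite the logarithmic length of the window, completing the proof.
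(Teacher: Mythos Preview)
Your first step --- shifting in time and applying Theorem~\ref{thm:conclusion} to $u^\ep(\cdot,\cdot+t-T)-\gb_{\pm}(t)$ --- is fine and yields
\[
|u^\ep(x,t)-u^\ep(0,t)|\leq \omega^\ep(t)+\gd/2,\qquad
\omega^\ep(t)=\sup_{[t-T,t]}u^\ep(0,\cdot)-\inf_{[t-T,t]}u^\ep(0,\cdot),
\]
with $T\sim|\log\ep|$. The genuine gap is the claim $\omega^\ep(t)\leq\gd/2$: you yourself identify that naive parabolic regularity gives only a H\"older time modulus, useless on an interval of length $|\log\ep|$, and the sketch offered in its place (``chaining the improvement-of-oscillation across many scales\ldots'') is not a proof. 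The machinery of Lemma~\ref{lem:q}, Corollary~\ref{cor:harnack} and Proposition~\ref{thm:con-origin1} is set up to propagate a \emph{one-sided} bound at the origin outward in space, not to control the oscillation of $t\mapsto u^\ep(0,t)$; there is no obvious way to extract the temporal statement you need from those tools without essentially reproving the theorem.

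The paper sidesteps this difficulty entirely by doubling the spatial variables: setting $v^\ep(x,y,t):=u^\ep(x,t)-u^\ep(y,t)$ on $\gO\tim\gO\tim(0,\infty)$, one has $v^\ep(0,0,t)=0$ for \emph{every} $t$, so no temporal oscillation ever needs to be controlled. The function $v^\ep$ solves the doubled equation
\[
v^\ep_t=\ep\tr[A^\ep D^2v^\ep]+B\cdot Dv^\ep,\qquad
A^\ep=\begin{pmatrix}a^\ep(x,t)&0\\0&a^\ep(y,t)\end{pmatrix},\quad B=(b(x),b(y)),
\]
and one applies Theorem~\ref{thm:conclusion} (with $\Pi=\emptyset$) to $\pm v^\ep$ in a $C^1$-domain $W$ squeezed between $\gO_\gd\tim\gO_\gd$ and $\gO\tim\gO$; the needed structural hypotheses on $B$ in $\R^{2n}$ are inherited directly from those on $b$. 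This is the missing idea in your approach.
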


\bpr We double the variables and define the function $v^\ep: \gO\tim\gO\,\tim\,[0,\,\infty) \to \R$ by 
\[
v^\ep(x,y,t):=u^\ep(x,t)-u^\ep(y,t). 
\]

It is standard that  $v^\ep$ solves in $\gO\tim\gO\tim (0,\,\infty)$ the doubled equation
\[\begin{aligned}
v^\ep_t&\,
=\ep\tr[a^\ep(x,t)D_x^2v^\ep]
+\ep\tr[a^\ep(y,t)D_y^2v^\ep]
+b(x)\cdot D_xv^\ep +b(y)\cdot D_yv^\ep
\\&\,=\ep\tr[A^\ep(x,y,t)D^2v^\ep]+B(x,y)\cdot Dv^\ep,
\end{aligned}\]
where 
\[
B(x,y):=(b(x),b(y)) \quad
\text{ and }\quad
A^\ep(x,y,t):=\begin{pmatrix}
a^\ep(x,t)&0\\ 0&a^\ep(y,t)
\end{pmatrix}. 
\]

 The conclusion follows if we apply Theorem~\ref{thm:conclusion}, with $\Pi=\emptyset$, to $\pm v^\ep$, since
 $v^\ep(0,0,t)=0$ for all $t\geq 0$ and $|v^\ep|\leq 2C_0$ in 
$\gO\tim\gO\,\tim\,[0,\,\infty)$. 
\smallskip

 The only issue is that the boundary of $\gO\tim\gO$ does not have the $C^1$- 
regularity required for the theorem.
\smallskip

 To overcome this difficulty, we only need to approximate $\gO\tim\gO$ by 
smaller $C^1$-domains, that is, for fixed $\gd>0$, we choose 
a $C^1$-domain $W\subset \R^{2n}$ so that 
\[
\gO_\gd\tim\gO_\gd \subset W_{\gd/2}\subset W\subset \gO\tim\gO,
\]
where $W_{\gd/2}:=\{(x,y)\in W\mid \dist((x,y),\pl W)>\gd/2\}$, and 
\[
B(x,y)\cdot N(x,y)<0 \quad\text{ for all } (x,y)\in\pl W,
\]
where $N(x,y)$ denotes the outward unit normal vector at $(x,y)\in \pl W$.  
\epr

\section{Quasi-potentials}\lab{sec:quasi-p}

 We establish here an important continuity property under perturbations for the minimum and the $\argmin$ map of the quasi-potentials we introduced earlier in the introduction.
\smallskip
   
  We begin with some notation and the introduction of several auxiliary quantities needed to define the perturbations. To this end, we  fix $\gb_0\in
I_g$,  
define $H_0\in C(\lbar\gO\tim\R^n)$ by 
\[H_0(x,p)=a(x,\gb_0)p\cdot p+b(x)\cdot p,
\] 
choose some 
$\gd_0>0$, and, 
for $\gd\in(0,\,\gd_0)$,
\[
\gth(\gd):=\max\{
|(a(x,c)-a(x,\gb_0))\xi\cdot\xi|\mid x\in\lbar\gO,\,
\xi\in\R^n,\, |\xi|\leq 1,\, c\in[\gb_0-\gd,\,\gb_0+\gd]\}.
\]

The continuity of $a(x,c)$ (recall \eqref{ellipticity}) yields  $\lim_{\gd\to 0}\gth(\gd)=0$, and, hence, 
selecting $\gd_0>0$ sufficiently small, we assume henceforth
that 
\[
\gth(\gd)\leq \gth_0/2\quad \text{ for all }\gd\in(0,\,\gd_0).
\] 

 We define $a_\gd^\pm \in C(\lbar\gO,\,\bbS^n)$ and 
$H_\gd^\pm \in C(\lbar\gO\tim\R^n)$  respectively by
\[
a_\gd^\pm(x):=a(x,\gb_0)\pm\gth(\gd)I \ \ \text{and} \ \ 
 H_\gd^\pm(x,p):=a_\gd^\pm(x)p\cdot p+b(x)\cdot p,
\]
and note that, for all $(x,c)\in\lbar\gO\tim[\gb_0-\gd,\,\gb_0+\gd]$, 
\[
(\gth_0/2) I\leq a_\gd^-(x)
\leq a(x,c)\leq a_\gd^+(x)\leq (\gth_0^{-1}+\gth_0/2)I.
\]

 We choose $\chi_\gd \in C(\R^n; [0,1])$ 
such that 
\[
\chi_\gd=1 \ \ \text{ in  } \ \ x\in\gO_{\gd}\quad
\text{ and }\quad
\chi_\gd=0 \ \  \text{ in  } \ \ \R^n\setminus \gO_{\gd/2},
\]
and define $\cH_\gd^\pm \in C(\lbar\gO\tim\R^n)$ by
\[\begin{aligned}
\cH_\gd^+(x,p)&\,=\chi_\gd(x)H_\gd^+(x,p)+
(1-\chi_\gd(x))(\gth_0^{-1}|p|^2+b(x)\cdot p),
\\[1mm]
\cH_\gd^-(x,p)&\,=\chi_\gd(x)H_\gd^-(x,p)+
(1-\chi_\gd(x))(\gth_0|p|^2+b(x)\cdot p),
\end{aligned}
\]
and note that, for all \  
$(x,c)\in \gO_{\gd/2}\tim[\gb_0-\gd,\,\gb_0+\gd]\,\cup\, 
(\gO\setminus \gO_{\gd/2})\tim\R$ 
\ and \ $p\in\R^n$, 
\[
\cH_\gd^-(x,p)\leq 
a(x,c)p\cdot p+b(x)\cdot p
\leq\cH_\gd^+(x,p). 
\]

We also have
\[
\cH_\gd^\pm(x,p)=H_\gd^\pm(x,p)   \ \text{ for all $(x,p)\in \gO_{\gd}\tim\R^n$},
\]
while, for all $(x,p)\in (\lbar \gO \setminus  \gO_{\gd/2})\tim\R^n$,  
\[ 
\cH_\gd^+(x,p)=\gth_0^{-1}|p|^2+b(x)\cdot p \ \  \text{ and } \ \   
\cH_\gd^-(x,p)=\gth_0|p|^2+b(x)\cdot p.
\]

If we set 
\[
\ga_\gd^+(x)=\chi_\gd(x)a_\gd^+(x)+(1-\chi_\gd(x))\gth_0^{-1}I 
\quad\text{ and }\quad 
\ga_\gd^-(x)=\chi_\gd(x)a_\gd^-(x)+(1-\chi_\gd(x))\gth_0I, 
\]
then, for all $(x,p)\in \lbar\gO\tim\R^n$, 
\[
\cH_\gd^\pm(x,p)=\ga_\gd^\pm(x)p\cdot p+b(x)\cdot p. 
\]

 Let $V_0$ and  $V_\gd^\pm$ 
be respectively 
the maximal subsolutions of 
\beq\lab{eq:quasi2-1}
\bcases
H_0(x,Du)=0 \ \text{ in } \ \gO, &\\
u(0)=0,&
\ecases
\eeq
and 
\beq\lab{eq:quasi2-2}
\bcases
\cH_\gd^\pm(x,Du)=0\quad\text{ in }\gO, &\\
u(0)=0.&
\ecases
\eeq 

We note by \cite{IS2015}*{Corollary 5} that $V_\gd^\pm(x)>0$ and $V_0(x)>0$ 
for all $x\in\lbar\gO\setminus\{0\}$. 
 Since $\cH_\gd^-\leq H_0\leq \cH_\gd^+$ on $\gO\tim\R^n$, it is clear that 
\beq\label{takis5}
V_\gd^+\leq V_0\leq V_\gd^- \ \text{ on } \ \lbar\gO. 
\eeq

 We set 
\[M_0:=\min_{\pl\gO} V_0, \ \  
\gG_0:=\argmin(V_0|\pl\gO), \ \  M_\gd^\pm:=\min_{\pl\gO}V_\gd^\pm, \ \ 
\gG_\gd^\pm:=\argmin(V_\gd^\pm|\pl\gO),
\]
and  note that 
\[M_\gd^+\leq M_0\leq M_\gd^-.\]

 The following result is about the continuity of $M_\gd^\pm$ and $\gG_\gd^\pm$ with respect to $\delta$.

\begin{prop} \lab{thm:quasi2-1} Assume  \eqref{omega}, \eqref{ellipticity}, \eqref{blip}, \eqref{g-astable} and \eqref{b-inward}. Then 
\beq\lab{eq:quasi2-4}
\lim_{\gd\to 0+}M_\gd^+=
\lim_{\gd\to 0+}M_\gd^-
=M_0
\eeq
and 
\beq\lab{eq:quasi2-5}
\limsup_{\gd\to 0+}\gG_\gd^+ \cup \limsup_{\gd\to 0+}\gG_\gd^-\subset \gG_0.
\eeq
\end{prop}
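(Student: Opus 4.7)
The plan is to prove $V_\gd^\pm\to V_0$ uniformly on $\lbar\gO$ as $\gd\to 0^+$; both \eqref{eq:quasi2-4} and \eqref{eq:quasi2-5} will then follow immediately. First I would produce an equi-Lipschitz bound for the family $\{V_\gd^\pm\}$: each $V_\gd^\pm$ is a viscosity subsolution of the quadratic inequality $\ga_\gd^\pm(x)Du\cdot Du+b(x)\cdot Du\leq 0$ with $\ga_\gd^\pm\geq (\gth_0/2)I$ uniformly in $\gd$, so the standard pointwise argument (``$\ga p\cdot p+b\cdot p\leq 0$ with $\ga\geq\gth_0/2$ forces $|p|\leq 2\|b\|_\infty/\gth_0$'') yields a $\gd$-independent Lipschitz bound. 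Combined with \eqref{takis5} and Arzel\`a--Ascoli, any sequence $\gd_k\to 0^+$ admits a subsequence along which $V_{\gd_k}^\pm\to\bar V^\pm$ uniformly on $\lbar\gO$, with $\bar V^+\leq V_0\leq\bar V^-$ and $\bar V^\pm(0)=0$.

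Next I would identify these limits by viscosity stability and the state-constraints uniqueness in Lemma~\ref{lem:A-comparison}. By construction $\cH_\gd^\pm=H_\gd^\pm=H_0\pm\gth(\gd)|p|^2+b\cdot p$ on $\gO_\gd\tim\R^n$, and since $\gth(\gd)\to 0$ these converge locally uniformly to $H_0$ on $\gO\tim\R^n$. Standard stability then shows $\bar V^-$ is a subsolution of $H_0\leq 0$ in $\gO$, so the maximality of $V_0$ and $\bar V^-(0)=0$ force $\bar V^-\leq V_0$, hence $\bar V^-=V_0$. For $\bar V^+$, I would pass to the limit in the state-constraint supersolution property of $V_\gd^+$ on $\lbar\gO$ (granted by Lemma~\ref{lem:A-comparison}) to conclude that $\bar V^+$ is a state-constraints supersolution of $H_0\geq 0$ on $\lbar\gO$; combined with the already known $\bar V^+\leq V_0$ in $\gO$, $\bar V^+(0)=0$, and Lemma~\ref{lem:A-comparison}, this forces $\bar V^+=V_0$. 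Since every subsequence yields the same limit, the whole family $V_\gd^\pm$ converges uniformly to $V_0$ on $\lbar\gO$, whence \eqref{eq:quasi2-4} follows from uniform convergence on the compact set $\pl\gO$. For \eqref{eq:quasi2-5}, any $x\in\limsup_{\gd\to 0^+}\gG_\gd^\pm$ arises as $x=\lim x_k$ for some $x_k\in\gG_{\gd_k}^\pm$ with $\gd_k\to 0^+$, and uniform convergence together with continuity of $V_0$ gives $V_0(x)=\lim V_{\gd_k}^\pm(x_k)=\lim M_{\gd_k}^\pm=M_0$, so $x\in\gG_0$.

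The delicate step is the boundary analysis in the identification of $\bar V^+$, because on the shell $\lbar\gO\setminus\gO_{\gd/2}$ we have $\cH_\gd^+(x,p)=\gth_0^{-1}|p|^2+b(x)\cdot p$, which strictly dominates $H_0(x,p)=a(x,\gb_0)p\cdot p+b(x)\cdot p$ since $a(x,\gb_0)\leq\gth_0^{-1}I$; consequently a supersolution inequality for $\cH_\gd^+$ at a boundary point need not descend to one for $H_0$ after passing to the half-relaxed lower limit. I expect to handle this either by exploiting that the modified shell $\lbar\gO\setminus\gO_{\gd/2}$ shrinks to $\pl\gO$, so that the contact points produced in the stability argument eventually lie in $\gO_{\gd_k}$ (where $\cH_{\gd_k}^+=H_{\gd_k}^+$ converges to $H_0$), or by constructing for each $\eta>0$ an explicit Lipschitz subsolution of $\cH_\gd^+\leq 0$ coinciding with $(1-\eta)V_0$ on $\gO_\gd$ and tapered in the boundary shell, so that the maximal-subsolution characterization of $V_\gd^+$ yields $V_\gd^+\geq (1-\eta)V_0-o_\eta(1)$, hence $\bar V^+\geq V_0$ directly.
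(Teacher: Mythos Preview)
Your overall plan (equi-Lipschitz compactness, then identify limits) is sound, and your treatment of the $V_\gd^-$ side matches the paper's. The gap is in the $V_\gd^+$ side: you aim for the strong conclusion $\bar V^+=V_0$ on $\lbar\gO$, but neither of your two proposed boundary fixes delivers this.

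For Alternative 1, when you test $\bar V^+$ at a point $\hat x\in\pl\gO$, the nearby minimizers $x_k$ of $V_{\gd_k}^+-\phi$ satisfy $x_k\to\hat x\in\pl\gO$; there is no mechanism forcing $x_k\in\gO_{\gd_k}$, and in fact $x_k$ will typically sit in the shell $\lbar\gO\setminus\gO_{\gd_k/2}$ where $\cH_{\gd_k}^+(x,p)=\gth_0^{-1}|p|^2+b\cdot p$. Stability therefore only yields that $\bar V^+$ is a supersolution of $H_{\gth_0}^+\geq 0$ on $\lbar\gO$ (as the paper records), not of $H_0\geq 0$, and Lemma~\ref{lem:A-comparison} does not apply with mismatched boundary Hamiltonians.

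For Alternative 2, any Lipschitz subsolution $w$ of $\cH_\gd^+\leq 0$ that agrees with $(1-\eta)V_0$ on $\gO_\gd$ must also be a subsolution of $\gth_0^{-1}|Dw|^2+b\cdot Dw\leq 0$ on the shell, which essentially forces $Dw=0$ there (the inequality $\gth_0|DV_0|^2\leq -b\cdot DV_0$ coming from $H_0\leq 0$ does not upgrade to $\gth_0^{-1}|DV_0|^2\leq -b\cdot DV_0$). The natural ``tapering'' is thus $w=\min\{(1-\eta)V_0,\,l\}$ with $l$ a constant, and for $w$ to equal $l$ throughout the shell one needs $l<(1-\eta)\min_{\lbar\gO\setminus\gO_\gd}V_0$, hence $l<M_0$. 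This yields only $\bar V^+\geq\min\{V_0,\,M_0\}$, not $\bar V^+\geq V_0$. That weaker bound does give $M_0^+=M_0$ and $\gG_0\subset\gG_0^+$, but it leaves open the possibility $V_0^+(z)=M_0$ at boundary points $z\notin\gG_0$, so the inclusion $\gG_0^+\subset\gG_0$ (and hence \eqref{eq:quasi2-5}) does not follow.

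The paper does not attempt to prove $\bar V^+=V_0$. It establishes exactly $\min\{V_0,M_0\}\leq V_0^+\leq V_0$ via the capped subsolution $\min\{\mu V_0,\,l\}$ with $l<M_0$, and then closes the gap $\gG_0^+\subset\gG_0$ by a separate boundary argument: at a hypothetical $z\in\gG_0^+\setminus\gG_0$ one tests the $H_{\gth_0}^+$--supersolution property of $V_0^+$ with $\phi=\ep\rho$ for a defining function $\rho$ of $\gO$, obtaining $\ep(\ep\gth_0^{-1}|D\rho(z)|^2+b(z)\cdot D\rho(z))\geq 0$, which contradicts \eqref{b-inward} for small $\ep$. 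This use of $b\cdot\nu<0$ is the missing ingredient in your outline.
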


 The set  limit in \eqref{eq:quasi2-5} 
is understood in the sense of Kuratowski,  that is, 
for a given $\{\gG_\gd\}_{\gd\in(0,\,\gd_0)}\subset \R^n$, 
\[
\limsup_{\gd\to 0+}\gG_\gd:=\bigcap_{r\in(0,\,\gd_0)}\,
\overline{\bigcup_{\gd\in(0,\,r)}\gG_\gd}
=\{x\in\R^n\mid 
x=\lim_{k\to\infty}x_k, \ x_k\in\gG_{\gd_k},\ \lim_{k\to\infty}\gd_k=0\}.  
\]

Now we prove  Proposition~\ref{thm:quasi2-1}.  

\bpr 
The uniform in $x$ and $\delta$ coercivity 
 of the Hamiltonians $\cH_\gd^\pm$, 
that is the fact that $\cH_\gd^\pm(x,p) \to \infty$ as $|p| \to \infty$ uniformly in $x$ and $\delta$, yields that  the families  $\{V_\gd^\pm\}_{\gd\in(0,\gd_0)}$  
are equi-Lipschitz  continuous on $\lbar\gO$, 
and, since $V_\gd^\pm(0)=0$, 
relatively compact in $C(\lbar\gO)$. 
\smallskip

 To prove \eqref{eq:quasi2-4} and \eqref{eq:quasi2-5}, it is enough to 
show that, if $\{\gd_j\}_{j \in \N}\subset(0,\,\gd_0)$ is such 
 that both $\{V_{\gd_j}^\pm\}_{j\in\N}$ 
converge in $C(\lbar\gO)$ to some $V_0^\pm\in C(\lbar\gO)$, 
that is 
\[
V_0^\pm=\lim_{j\to\infty} V_{\gd_j}^\pm
\quad\text{ uniformly  on } \ \lbar\gO, 
\]
then 
\beq \lab{eq:quasi2-6}
M_0=\min_{\pl\gO}V_0^+=\min_{\pl\gO}V_0^-.
\eeq
and 
\beq\lab{eq:quasi2-7}
\argmin(V_0|\pl\gO)=\argmin(V_0^+|\pl\gO)=\argmin(V_0^-|\pl\gO).
\eeq

 For notational convenience, we set 
\[
M_0^\pm:=\min_{\pl\gO}V_0^\pm \ \ \text{ and } \ \ 
\gG_0^\pm=\argmin(V_0^\pm|\pl\gO).  
\]

 It is well-known (see Lemma~\ref{lem:A-super} 
in the Appendix) that the $V_\gd^\pm$'s satisfy  in the viscosity sense
\[ 
\cH_\gd^\pm(x,DV_\gd^\pm)\geq 0 \ \text{ on }\lbar\gO \ \ \ \text{and} \ \ \ 
\cH_\gd^\pm(x,DV_\gd^\pm)\leq 0 \ \text{ in }\gO,
\]
that is, the $V_\gd^\pm$'s  are 
solutions of the state-constraints problems 
\[
\cH_\gd^\pm(x,DV_\gd^\pm)=0 \quad\text{ in }\gO.
\] 

 By the stability of viscosity properties,  the $V_0^\pm$'s  satisfy 
\[
H_0(x,DV_0^\pm(x))\leq 0 \ \text{ in } \ \gO \ \ \text{ and } \ \
H_{\gth_0}^+(x,DV_0^+(x))\geq 0 \quad\text{ on }\lbar\gO,
\]
where 
\[
H_{\gth_0}^+(x,p):=
\bcases
H_0(x,p)&\text{ for }(x,p)\in\gO\tim\R^n,\\[1mm]
\gth_0^{-1}|p|^2+b(x)\cdot p&\text{ for }(x,p)\in\pl\gO\tim\R^n.
\ecases
\]   

Here we used that 
\[
\limsup_{\gd\to 0}{\!}^*\, \cH_\gd^\pm(x,p)=
\liminf_{\gd\to 0}{\!}_*\,\cH_\gd^\pm(x,p)=H_0(x,p) \ \  
\text{ for all $(x,p)\in \gO\tim\R^n$},
\]
and
\[
\limsup_{\gd\to 0}{\!}^*\,\cH_\gd^+(x,p)=
H_{\gth_0}^+(x,p)  \ \   
\text{ for all $(x,p)\in\lbar\gO\tim\R^n$}.
\]

The maximality of $V_0$  implies  that 
$V_0^-\leq V_0  \ \text{ on} \ \varOmega$ and, since, in view of \eqref{takis5}, $V_0 \leq V_0^- \ \text{in} \ \overline \varOmega$, we have $V_0^-= V_0$, which, 
obviously gives   
\beq\lab{eq:minus}
M_0=M_0^- \quad\text{ and }\quad 
\gG_0^-=\gG_0.
\eeq

   The argument for  $M_0^+$ and $\gG_0^+$ is slightly more complicated. 
\smallskip

Since \eqref{takis5} yields $V_0^+\leq V_0$, it is immediate 
that  
\[M_0^+ \leq M_0.\]

Next we show that
\beq\lab{eq:quasi2-8}
\min\{V_0,\,M_0\}\leq V_0^+ \ \text{ in } \  \lbar\gO,
\eeq
which, together  the previous inequality, gives
\beq\lab{eq:quasi2-10}
M_0^+=M_0 \ \ \text{ and } \ \ \gG_0\subset\gG_0^+.
\eeq

 We proceed with the proof of \eqref{eq:quasi2-8}. Fix  $l\in(0,\,M_0)$, choose $\gamma_1\in (0,\,\gd_0)$ so that
\[
V_0>l \ \text{ on } \ \lbar\gO\setminus\gO_{\gamma_1},
\]
fix $\mu\in(0,\,1)$ sufficiently close to $1$ so that
\[
\mu V_0>l \ \text{ on} \ \lbar\gO\setminus\gO_{\gamma_1},
\] 
and choose $\gamma_2\in (0,\,\gamma_1)$ so that
\[
\mu(a(x,\gb_0)+\gth(\gd)I)\leq a(x,\gb_0)\quad \text{ for all }\ x\in\lbar\gO
\ \text{ and }\ \gd\in(0,\,\gamma_2). 
\]

Observe that, if $u_\mu(x):=\mu V_0(x)$, then, for all $\gd\in(0,\,\gamma_2)$, 
\[
u_\mu>l \ \text{ in  } \ \lbar\gO\setminus \gO_\gd,
\]
and, for all $\gd\in(0,\,\gamma_2)$, in the viscosity sense,
\[\begin{aligned}
H_\gd^+(x,Du_\mu)
&\,=\mu(\mu(a(x,\gb_0)+\gth(\gd)I)DV_0\cdot DV_0+b(x)\cdot DV_0)
\\&\,\leq \mu(a(x,\gb_0)DV_0\cdot DV_0+b\cdot DV_0)\leq \mu H_0(x,DV_0)\leq 0
\quad\text{ in }\ \gO. \end{aligned}\]

 Now set $u_\mu^l:=\min\{u_\mu,\,l\}$
and note that the convexity of $H_\gd^+(x,p)$ in $p$ yields 
that, if $\gd\in(0,\,\gamma_2)$, then  
\[
\cH_\gd^+(x,Du_\mu^l)=
H_\gd^+(x,Du_\mu^l)\leq 0 \quad\text{ in }\ \gO_{\gd}.
\]

Also, if $\gd\in(0,\,\gamma_2)$, then, 
since $u_\mu^l(x)=l$ in an open neighborhood $N_\gd\subset\gO$ 
of $\gO\setminus\gO_{\gd}$, 
\[
\cH_\gd(x,Du_\mu^l)=0 \quad\text{ in }\ N_\gd. 
\]

Thus we deduce that, for any $\gd\in(0,\gamma_2)$, 
$u_\mu^l$ is a subsolution of $\cH_\gd^+(x,Du_\mu^l)\leq 0$
in $\gO$, and, hence,  
\ $u_\mu^l\leq V_\gd^+$ in $\lbar\gO\,$  by the maximality of $V_\gd^+$. 
\smallskip
Sending $\gd\to 0$, along the sequence $\{\gd_j\}$, 
$\mu\to 1$ and $l\to M_0$ in this order, we conclude that 
\eqref{eq:quasi2-8} holds. 
\smallskip

Next we show that $\gG_0^+\subset \gG_0$.  Let $z\in \gG_0^+\setminus\gG_0$ and 
observe that, since $V_0(z)>M_0$, there is an open, relatively to $\lbar\gO$, neighborhood $N_z\subset\lbar\gO$, 
such that $V_0>M_0$ in  $N_z$, 
while  \eqref{eq:quasi2-8} gives  $V_0^+\geq M_0$ in  $N_z$. 

\smallskip
 Let $\rho\in C^1(\R^n)$ be a defining function of $\gO$, that is,  
$\gO=\{x\in\R^n\mid\rho(x)<0\}$ and $|D\rho|\not=0$ on  $\pl\gO$, and,  
in particular, $D\rho/|D\rho|=\nu$ on $\pl\gO$. 
\smallskip 

 For any $\ep>0$, $x \mapsto V_0^+(x)-\ep\rho(x)$ achieves 
a minimum at $z$ over $N_z$. Since $H_{\gth_0}^+(x,DV_0^+)\geq 0$ on
$\lbar\gO$, we have 
\[
0\leq H_{\gth_0}^+(z,\ep D\rho(z))=\ep(\ep\gth_0^{-1}|D\rho(z)|^2
+b(z)\cdot D\rho(z)),
\] 
which is a contradiction, in view of the fact that  the right hand side is negative if $\ep$ 
is sufficiently small. 
\smallskip

 It follows that $\gG_0^+\setminus\gG_0=\emptyset$, that is, 
$\gG_0^+\subset \gG_0$, which, together with \eqref{eq:quasi2-10}, proves the claim. 
\epr

\section{Barrier functions} 

We adapt and modify here 
the main argument of building barrier functions of \cite{IS2015}  
to obtain information on the behavior of the solutions $u^\ep$ of \eqref{eq:l-pde}
along the positive time axis $l$, that is on $u^\ep(0,t)$, for a sufficiently long time interval $[0,\,T(\ep))$, 
under the assumption that 
the matrices $a^\ep \in C(\lbar Q_{T(\ep)})$ are bounded 
by $\ga\in C(\lbar Q_{T(\ep)})$ from above or from below.  
\smallskip  

Recall that, for any  $\ga\in C(\lbar\gO,\bbS^n(\gth_0))$,  
$H_\ga\in C(\lbar\gO\tim\R^n)$ is the Hamiltonian given by 
$H_\ga(x,p)=\ga(x) p\cdot p+b(x)\cdot p$,   
$V_\ga\in \Lip(\lbar\gO)$ is the quasi-potential corresponding to $(\ga,b)$, and 
$M_\ga=\min_{\pl\gO}V_\ga$, and set 
\[\gS_\ga:=\{x\in\lbar\gO\mid V_\ga(x)
\leq M_\ga\}, \quad \gG_\ga:=\gS_\ga\cap\pl\gO,
\] 
and, for any $m>0$,  
\[\gS_\ga^m:=\{x\in\lbar\gO\mid V_\ga(x)\leq m\}.
\]

 We consider again \eqref{eq:l-pde} for a family of $a^\ep\in C(\lbar Q,\bbS^n(\gth_0))$ 
and present two results, one stated in the form of an upper bound and the other in the form of a lower bound. 
The upper bound is valid up to $\gl$ smaller than $M_\ga$ in the logarithmic time scale, and the lower bound is valid up to $\infty$, provided $u^\ep$, on the boundary portion $\gG_\ga\tim[0,\,T(\ep))$, is larger than a specified lower bound.

We begin with the former,  which  corresponds to \cite{IS2015}*{Theorem 1 (i)} in its nature. The latter is related 
 to \cite{IS2015}*{Theorem 1(ii)}.

\begin{prop}\lab{thm:short} 
Assume \eqref{B} and fix 
$\ga\in C(\lbar\gO,\bbS^n(\gth_0))$,  
$T(\ep)\in (0,\,\infty]$, $C_0>0$ and $m\in(0,\,M_\ga)$.  
If, for  $a^\ep \in C(\lbar Q_{T(\ep)}; \bbS^n(\gth_0))$  such that 
$a^\ep\leq \ga$ in $Q_{T(\ep)}$,
$u^\ep\in C(\lbar Q_{T(\ep)})\cap C^{2,1}(Q_{T(\ep)})$  is 
a subsolution of \eqref{eq:l-pde} in $Q_{T(\ep)}$ such that 
 \[
u^\ep(\cdot,0)\leq 0 \ \text{ in } \ \gS_\ga^m \ \ \text{and} \ \ 
u^\ep\leq C_0 \  \text{ in }\ Q_{T(\ep)},
\]
then, for any $\gd>0$, there exists $\ep_0\in(0,\,1)$ such that, if  
$\ep\in(0,\,\ep_0)$, then
\[
u^\ep(0,t)\leq \gd\quad\text{ for all }\ t\in[0,\,\exp((m-\gd)/\ep)\wedge T(\ep)).
\]
\end{prop}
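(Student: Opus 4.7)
The plan is to follow the barrier strategy outlined in the introduction: construct a classical $C^2$ supersolution $w^\ep$ of \eqref{eq:l-pde} on the compact sublevel set $K := \gS_\ga^m$, which, since $m < M_\ga = \min_{\pl\gO}V_\ga$, sits strictly inside $\gO$; arrange that $w^\ep$ dominates $u^\ep$ on the parabolic boundary of $K \tim [0, T(\ep))$; and conclude via the comparison principle, evaluating the resulting bound at $x = 0$.

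The first step is to smooth the quasi-potential. Given $\sigma > 0$, I would construct $v \in C^2$ in an open neighborhood $N$ of $K$, together with $\eta = \eta(\sigma) > 0$, such that
\[
V_\ga - \sigma \leq v \leq V_\ga, \qquad v(0) \leq \sigma, \qquad H_\ga(x, Dv(x)) \leq -\eta \ \text{ in } N.
\]
The natural construction is: multiply $V_\ga$ by a factor $1-\tau$ with $\tau > 0$ small, subtract a quadratic correction of the form $\kappa|x|^2$ to force strict subsolutionality near the origin (where $b(0) = 0$ prevents the naive rescaling from producing negativity), and then mollify, with $\tau$, $\kappa$ and the mollification scale driven to $0$ together with $\sigma$. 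An essentially equivalent procedure is already available in \cite{IS2015}.

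With $v$ in hand, fix $\gd > 0$, choose $\sigma \in (0, \gd/4)$ with also $\sigma < m/2$, set $d_\ep := (\gd/2)\exp(-(m-\gd)/\ep)$, and define
\[
w^\ep(x,t) := C_0 \exp\!\left(\frac{v(x) - m + \sigma}{\ep}\right) + d_\ep t.
\]
A direct calculation yields
\[
w^\ep_t - \ep\tr[a^\ep D^2 w^\ep] - b\cdot Dw^\ep
= d_\ep - \frac{C_0}{\ep}\exp\!\left(\frac{v - m + \sigma}{\ep}\right)\bigl(H_\ep(x,t, Dv) + \ep\tr[a^\ep D^2 v]\bigr),
\]
where $H_\ep(x,t,p) := a^\ep(x,t) p\cdot p + b(x)\cdot p$. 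Since $a^\ep \leq \ga$, one has $H_\ep(x,t, Dv) \leq H_\ga(x, Dv) \leq -\eta$, and $\ep\tr[a^\ep D^2 v] = O(\ep)$ uniformly on $K$; hence the bracketed expression is $\leq -\eta/2$ for $\ep$ small, making $w^\ep$ a classical supersolution of \eqref{eq:l-pde} in $K^\circ \tim (0, T(\ep))$. On the lateral boundary $\pl K \tim [0, T(\ep))$ one has $V_\ga = m$, hence $v \geq m - \sigma$ and $w^\ep \geq C_0 \geq u^\ep$; at the initial time, the hypothesis $u^\ep(\cdot,0) \leq 0$ on $\gS_\ga^m$ gives $u^\ep(x,0) \leq 0 \leq w^\ep(x,0)$. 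The comparison principle (applied after approximating $K$ from inside by smooth subdomains if necessary) then yields $u^\ep \leq w^\ep$ on $K \tim [0, T(\ep))$, and evaluating at $x = 0$ for $t \leq \exp((m-\gd)/\ep)$ gives
\[
u^\ep(0,t) \leq C_0 \exp\!\left(\frac{2\sigma - m}{\ep}\right) + \frac{\gd}{2},
\]
whose first summand can be forced below $\gd/2$ by shrinking $\ep_0$ since $2\sigma < m$.

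The principal obstacle is the smoothing step: producing a $v$ that is simultaneously a close pointwise approximation of $V_\ga$, a \emph{strict} $C^2$ subsolution of $H_\ga(x, Du) = 0$, and normalized by $v(0) \approx 0$. The degeneracy of $H_\ga$ at the origin (arising from $b(0) = 0$) means the usual $(1-\tau)V_\ga$ trick does not by itself force strict negativity of $H_\ga(\cdot, Dv)$ near $0$, so a carefully tuned quadratic perturbation is needed there; verifying that the strict supersolution estimate for $w^\ep$ survives in a small neighborhood of $0$ is the delicate technical point, but only mild modifications of the construction in \cite{IS2015} are required.
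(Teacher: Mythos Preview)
Your barrier and comparison strategy match the paper's, but the smoothing step contains a genuine gap. You claim a $v\in C^2$ with $H_\ga(x,Dv(x))\leq -\eta$ on a neighborhood $N$ of $K=\gS_\ga^m$. Since $0\in K\subset N$ and $b(0)=0$, one has $H_\ga(0,p)=\ga(0)p\cdot p\geq \gth_0|p|^2\geq 0$ for \emph{every} $p\in\R^n$; hence no smooth function can satisfy $H_\ga(0,Dv(0))<0$. The quadratic perturbation you propose does not cure this: with $v=(1-\tau)V_\ga+\kappa|x|^2$ and $\kappa>0$ small one gets at best $H_\ga(x,Dv)\leq 0$ with equality at $x=0$, while subtracting $\kappa|x|^2$ makes things worse because $-2\kappa\, b(x)\cdot x\geq 2\kappa b_0|x|^2>0$ near the origin. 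Thus the claim ``the bracketed expression is $\leq -\eta/2$ for $\ep$ small'' is false at $x=0$, and your supersolution argument breaks down precisely there.

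The paper's Lemma~\ref{lem:v_r} records exactly this obstruction: it produces $v_r\in C^2(\lbar\gO)$ with $H_\ga(x,Dv_r)\leq -\eta$ only in $\gO\setminus B_r$, while in $B_r$ one merely has $H_\ga(x,Dv_r)\leq 1$. The role of the linear term $d_\ep t$ is then not cosmetic but essential: since $V_\ga(x)\leq C|x|^2\leq r$ in $B_r$ for $r$ small, the exponential factor satisfies $w^\ep\leq \exp((-m+4r)/\ep)$ there, and choosing $d_\ep=(2/\ep)\exp((-m+4r)/\ep)$ absorbs the positive right-hand side in $B_r$, making $z^\ep=w^\ep+d_\ep t$ a supersolution on all of $\gO$. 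Your choice $d_\ep=(\gd/2)\exp(-(m-\gd)/\ep)$ would in fact also suffice for this absorption (because $2\sigma<\gd$ forces $\exp((2\sigma-m)/\ep)\ll d_\ep$ as $\ep\to 0$), but you must replace the impossible global strict-subsolution claim by the two-region estimate of Lemma~\ref{lem:v_r} and carry out the compensation explicitly.
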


The lower bound is stated next. 

\begin{prop}\lab{thm:long} Assume  \eqref{B}, 
fix $\ga\in C(\lbar\gO,\bbS^n(\gth_0))$,  
$T(\ep)\in (0,\,\infty]$, $C_0>0$ 
and $m>M_\ga$. If, for  $a^\ep \in C(\lbar Q_{T(\ep)};\bbS^n(\gth_0))$
such that $a^\ep\geq \ga$ in $Q_{T(\ep)}$, 
$u^\ep\in C(\lbar Q_{T(\ep)})\cap C^{2,1}(Q_{T(\ep)})$  
is a supersolution of \eqref{eq:l-pde} in $Q_{T(\ep)}$ 
such that 
\[u^\ep(\cdot,0)\geq 0 \ \text{ in } \  \gS_\ga^m,  \ \ u^\ep \geq 0 \ \text{ in } \ (\gS_\ga^m\cap\pl\gO)\tim(0,T(\ep)) \ \text{and} \ u^\ep\geq -C_0\ \ \text{ in } Q_{T(\ep)},
\]
then, for any $\gd>0$, there exists $\ep_0\in(0,\,1)$ such that, if 
$\ep\in(0,\,\ep_0)$, then
\[
u^\ep(0,t)\geq -\gd\quad\text{ for all }\ t\in[0,\,T(\ep)).
\]
\end{prop}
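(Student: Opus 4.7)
The plan is to construct a sub-barrier $\varphi^\ep$ for $u^\ep$ on the cylinder $(\gS_\ga^m \cap \gO) \times [0, T(\ep))$ and to apply the parabolic comparison principle to conclude $u^\ep(0,t) \geq \varphi^\ep(0)$, with $\varphi^\ep(0) \geq -\gd$ for $\ep$ sufficiently small. The restricted spatial domain $\gS_\ga^m$ is dictated by the hypothesis $m > M_\ga$: this guarantees $0$ lies strictly inside $\gS_\ga^m$, that $\gS_\ga^m$ meets $\pl\gO$ nontrivially, and that the ``inner boundary'' $\{V_\ga = m\} \cap \gO$ sits strictly inside $\gO$. On the parabolic boundary of the cylinder the hypotheses split neatly: $u^\ep \geq 0$ at $t = 0$ on $\gS_\ga^m$ and on $(\gS_\ga^m \cap \pl\gO) \times (0, T(\ep))$, while only $u^\ep \geq -C_0$ is assumed on $(\{V_\ga = m\} \cap \gO) \times (0, T(\ep))$. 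Thus a subsolution $\varphi^\ep$ that fits underneath $u^\ep$ must be $\leq 0$ on the first two pieces, $\leq -C_0$ on the last, and take values $\geq -\gd$ at the origin.

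The barrier is of the same exponential type as in Proposition~\ref{thm:short} and \cite{IS2015}, with two modifications that together make up the ``adaptation'' advertised in the introduction. First, the barrier is taken \emph{time-independent}, since the term $d_\ep t$ used in the earlier construction would destroy any bound as $t$ approaches the (possibly infinite) time horizon $T(\ep)$. Second, the asymmetry in boundary values is absorbed into the barrier by an exponent shift and an additive constant: after producing, by the standard smoothing for state-constraint viscosity solutions of Appendix~C, a function $v \in C^2(\lbar\gO)$ with $v(0) = 0$ and $\|v - V_\ga\|_{L^\infty(\gS_\ga^m)} \leq \mu$ for some small $\mu > 0$, one considers a barrier whose leading part is $\exp((v(x) - m + \mu)/\ep)$ with the appropriate sign of the prefactor and an additive offset so that $\varphi^\ep(0) \to -\gd$ and $\varphi^\ep \leq -C_0$ on $\{V_\ga = m\}$. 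The computation done in the introduction reduces the subsolution inequality for $\varphi^\ep$ to a sign condition on $H_{a^\ep}(x, Dv) + \ep\tr[a^\ep D^2 v]$; once this inequality and the four boundary inequalities are verified, the parabolic comparison principle (Appendix~B) delivers $\varphi^\ep \leq u^\ep$ on the cylinder, hence the desired lower bound at $(0,t)$.

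The main obstacle is precisely this sign condition. In Proposition~\ref{thm:short} the hypothesis $a^\ep \leq \ga$ yields $H_{a^\ep}(x, Dv) \leq H_\ga(x, Dv)$, so that $v \approx V_\ga$ (a near-subsolution of $H_\ga(x, Du) = 0$) makes the standard exponential barrier automatically a supersolution. Here with $a^\ep \geq \ga$ the inequality $H_{a^\ep} \geq H_\ga$ points the opposite way, and a naive choice $v \approx V_\ga$ does \emph{not} produce a subsolution: the extra nonnegative contribution $(a^\ep - \ga) Dv \cdot Dv$ can push $H_{a^\ep}(x, Dv)$ above zero. The fix is to choose $v$ as a strict smooth subsolution of a Hamilton--Jacobi equation with a slightly inflated matrix, so that $H_\ga(x, Dv) \leq -\eta < 0$ away from a neighborhood of the origin, by perturbing a sup-convolution of a rescaled quasi-potential; the strict negativity absorbs the extra term. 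A small neighborhood of the origin, where $|Dv|$ vanishes and the strict inequality degenerates, is handled separately by a local quadratic correction. One then verifies that all the parameters remain compatible with the boundary constraints --- strict subsolution for the PDE inequality, closeness of $v$ to $V_\ga$ for the inner-boundary inequality $\varphi^\ep \leq -C_0$ on $\{V_\ga = m\}$, and $v(0) = 0$ for $\varphi^\ep(0) \approx -\gd$ --- and applies the comparison principle from Appendix~B. Balancing these competing requirements, and carrying out the viscosity-solution verification when $V_\ga$ is only Lipschitz, constitutes the technical heart of the proof.
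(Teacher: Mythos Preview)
Your diagnosis of the obstacle is correct, but your proposed fix does not work. You write that choosing $v$ with $H_\ga(x,Dv)\leq -\eta$ will let the ``strict negativity absorb the extra term'' $(a^\ep-\ga)Dv\cdot Dv$. It will not: since $a^\ep\in\bbS^n(\gth_0)$ only, that extra term can be as large as $(\gth_0^{-1}-\gth_0)|Dv|^2$, and $|Dv|\approx|DV_\ga|$ is of order one away from the origin. No uniform $\eta>0$ controls this, so a single-exponential barrier $\varphi^\ep(x)=-\,\text{const}\cdot\exp((v(x)-m+\mu)/\ep)+\text{const}$ built from $v\approx V_\ga$ \emph{cannot} be made a subsolution of \eqref{eq:l-pde} under the hypothesis $a^\ep\geq\ga$; the sign on the quadratic part goes irreparably the wrong way.

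The paper resolves this by bringing in a second, entirely different auxiliary function. Lemma~\ref{lem:w_m} produces $w\in\Lip(\lbar\gO)$ with $0<\min w\leq\max w<m$, a one-sided Hessian bound $D^2w\leq\eta^{-1}I$, and the \emph{strict supersolution} property $H_\ga(x,-Dw)\geq\eta$. Setting $u:=-w$, the crucial point is that now $H_{a^\ep}(x,Du)=a^\ep Du\cdot Du+b\cdot Du\geq \ga Du\cdot Du+b\cdot Du=H_\ga(x,Du)\geq\eta$: for a function satisfying $H_\ga\geq\eta$ rather than $H_\ga\leq-\eta$, the ordering $a^\ep\geq\ga$ pushes in the \emph{right} direction. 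The barrier is then the two-term combination
\[
z^\ep(x)=-\exp\Bigl(\frac{v(x)-m+2r}{\ep}\Bigr)+\exp\Bigl(\frac{u(x)}{\ep}\Bigr)-\exp\Bigl(\frac{-\min w}{\ep}\Bigr),
\]
where the first exponential (built from $v\approx V_\ga$ of Lemma~\ref{lem:v_r}) supplies the large negative values on $\lbar\gO\setminus\gS_\ga^m$ needed for the boundary comparison, while the second exponential (built from $u=-w$) supplies the positive drift that makes $z^\ep$ a genuine subsolution. The condition $\max w<m$ is exactly what lets the $u$-term dominate where it must. Your outline has the first ingredient but is missing the second; without Lemma~\ref{lem:w_m} the argument cannot close.
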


The proofs of Propositions~\ref{thm:short} and \ref{thm:long} use the next two lemmata;  for their proof we refer  to \cite{IS2015}.
 
\begin{lem} \lab{lem:v_r} Assume \eqref{B} and fix $\ga\in C(\lbar\gO,\bbS^n(\gth_0))$. For any $r\in (0,\,r_0)$, there exist $v_r\in C^2(\lbar\varOmega)$ and 
$\eta \in (0,1)$ such that 
\beq\label{takis10}
\bcases
H_\ga(x,Dv_r)\leq -\eta \ \text{ in } \ \varOmega\setminus B_r, \ &
\\ H_\ga(x,Dv_r)\leq 1 \ \text{ in } \ B_r, \ &
\\ \|v_r-V_\ga\|_{L^\infty(\varOmega)}<r.
\ecases
\eeq
\end{lem}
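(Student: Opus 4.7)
The plan is to construct $v_r$ as a smooth perturbation of the quasi-potential $V_\ga$ that is a strict subsolution of $H_\ga(x,Du)=0$ outside a small neighborhood of the origin. Since $V_\ga\in\Lip(\lbar\gO)$ is a viscosity subsolution of $H_\ga(x,Du)=0$ in $\gO$ and $H_\ga(x,\cdot)$ is convex, Rademacher's theorem and a standard convex analysis argument give $H_\ga(x,DV_\ga)\leq 0$ a.e.\ in $\gO$. Using the quadratic structure a direct calculation produces, a.e.\ in $\gO$,
\[
H_\ga(x,D[(1-\sigma)V_\ga])=(1-\sigma)\bigl[H_\ga(x,DV_\ga)-\sigma\,\ga(x)DV_\ga\cdot DV_\ga\bigr]\leq -(1-\sigma)\sigma\theta_0|DV_\ga|^2,
\]
so $(1-\sigma)V_\ga$ is a strict subsolution with a defect proportional to $|DV_\ga|^2$.

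Next I would regularize by convolution with a standard mollifier $\rho_\delta$, setting $v_r:=(1-\sigma)V_\ga*\rho_\delta$ after a Lipschitz extension of $V_\ga$ to a neighborhood of $\lbar\gO$. The Lipschitz bound on $V_\ga$ immediately gives $\|v_r-V_\ga\|_{L^\infty(\gO)}\leq C(\sigma+\delta)$, so the third inequality in \eqref{takis10} follows by choosing $\sigma,\delta$ small in terms of $r$. Applying Jensen's inequality to the convex quadratic $p\mapsto\ga(x)p\cdot p$ together with uniform continuity of $\ga$ and $b$ and the Lipschitz bound on $V_\ga$ yields the pointwise estimate
\[
H_\ga(x,Dv_r)\leq -(1-\sigma)\sigma\theta_0\,(|DV_\ga|^2*\rho_\delta)(x)+\omega(\delta)\quad\text{on }\gO_\delta,
\]
where $\omega(\delta)\to 0$ as $\delta\to 0$. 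In particular $H_\ga(x,Dv_r)\leq\omega(\delta)\leq 1$ throughout $\gO$ for $\delta$ small, giving the second inequality in $B_r$.

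The main technical obstacle is the pointwise lower bound $|DV_\ga|^2*\rho_\delta\geq c_r>0$ on $\gO\setminus B_r$ needed to extract some $\eta\in(0,1)$ from the first inequality, since $V_\ga$ could a priori have critical points away from the origin and this bound does not follow directly from maximality. One handles this by exploiting additional structure of the quasi-potential, either via its control/large-deviation representation (at any $x\neq 0$ the optimal trajectory to $x$ from the origin must carry nonzero control, forcing $DV_\ga(x)\neq 0$ wherever it exists), or more robustly by absorbing a correction into an extra term $\sigma\psi$ in $v_r$ constructed from a smooth Lyapunov function $\psi$ for $\dot x=b(x)$ whose existence is guaranteed by \eqref{g-astable}. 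Once this strict gradient control is in place, fixing $\sigma$ and $\delta$ small as functions of $r$ simultaneously secures all three conclusions in \eqref{takis10}.
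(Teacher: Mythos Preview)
The paper does not actually prove this lemma; it cites the companion paper \cite{IS2015} for the proof. Your outline---scale $V_\ga$, take a convex combination with an auxiliary strict subsolution, and mollify---is the standard route and matches what is done there. Two remarks are in order.

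First, of your two proposed fixes for the critical-point obstacle, only the Lyapunov one is reliable. The claim that the control representation forces $DV_\ga(x)\neq 0$ at every differentiability point $x\neq 0$ is not obviously true: the optimal trajectory may arrive at $x$ with velocity $b(x)$ (zero control), which would give $DV_\ga(x)=0$. So you should commit to the second option. Concretely, assumptions \eqref{g-astable} and \eqref{b-inward} yield a smooth $\psi$ on a neighborhood of $\lbar\gO$ with $b\cdot D\psi\leq -c_r<0$ on $\lbar\gO\setminus B_r$; for $\mu$ small, $H_\ga(x,\mu D\psi)=\mu^2\ga D\psi\cdot D\psi+\mu\,b\cdot D\psi\leq -\mu c_r/2$ there. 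Then by convexity of $p\mapsto H_\ga(x,p)$, the Lipschitz function $w:=(1-\sigma)V_\ga+\sigma\mu\psi$ satisfies, a.e., $H_\ga(x,Dw)\leq -\sigma\mu c_r/2$ on $\gO\setminus B_r$ and $H_\ga(x,Dw)\leq C\sigma$ on $B_r$. This bypasses the need for any lower bound on $|DV_\ga|$.

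Second, be careful with the mollification near $\pl\gO$: after a mere Lipschitz extension of $V_\ga$ the a.e.\ inequality $H_\ga(x,DV_\ga)\leq 0$ need not persist outside $\gO$, so the Jensen step can fail at boundary points. The clean way around this is to build $w$ from $\psi$ as above, since $\psi$ is smooth on a full neighborhood of $\lbar\gO$ and $b\cdot D\psi<0$ there; alternatively one can use \eqref{b-inward} to extend the strict subsolution property slightly beyond $\pl\gO$ before convolving.
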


\begin{lem}\lab{lem:w_m}  Assume \eqref{B} and fix $\ga\in C(\lbar\gO,\bbS^n(\gth_0))$.  For each $m>M_\ga$, there exists $w_m\in \Lip(\lbar\varOmega)$ 
and $\eta>0$ such that
\begin{equation}
0<\min_{\lbar\varOmega}w_m\leq \max_{\lbar\varOmega}w_m<m,
\end{equation}
and, in the viscosity supersolution sense,  
\begin{equation}\label{eq:lts0}
H_\ga(x,-Dw_m)\geq \eta \ \text{ in } \ \varOmega \ \text{ and } \  \
D^2w_m(x)\leq \eta^{-1}I \text{ in } \ \varOmega.
\end{equation}
\end{lem}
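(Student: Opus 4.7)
The plan is to build $w_m$ as a Lipschitz function whose sub-level sets are transversal to the drift $b$, exploiting the slack $m-M_\ga>0$ to arrange simultaneously the value range $0<w_m<m$, the strict Hamilton--Jacobi supersolution inequality, and the semi-concavity bound. The key conceptual ingredient is the \emph{reverse} Hamiltonian $\check H(x,p):=H_\ga(x,-p)=\ga(x)p\cdot p-b(x)\cdot p$, whose associated dynamics $\dot X=-b(X)$ are repelling from the origin and for which every forward orbit exits $\gO$ in view of \eqref{b-inward}.

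First I would construct an auxiliary Lipschitz function $\tilde w$ that is a (non-strict) supersolution of $H_\ga(x,-D\tilde w)=0$ in $\gO$ with $0\leq\tilde w<m$. Since $m>M_\ga$, there exists $y^\star\in\argmin(V_\ga|\pl\gO)$ with $V_\ga(y^\star)=M_\ga<m$, and the duality between forward and reverse optimal-control problems suggests taking $\tilde w$ to be the value function of the reverse-time control problem of reaching a neighborhood of $y^\star$ in $\lbar\gO$ (equivalently, the maximal subsolution of $\check H(x,Du)=0$ with appropriate state-constraint/target normalization). An analogue of Lemma~\ref{lem:v_r}, applied to $\check H$ in place of $H_\ga$, then yields a smooth approximation $\tilde w_r\in C^2(\lbar\gO)$ with $H_\ga(x,-D\tilde w_r)\geq\eta_1>0$ outside a small ball $B_r$ and $\|\tilde w_r-\tilde w\|_\infty<r$. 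Next, to upgrade the inequality to $\geq\eta>0$ everywhere and add a semi-concavity bound, I would perturb by a smooth Lyapunov function $\ell$ of $b$ (guaranteed by \eqref{g-astable} and \eqref{borigin} via the converse Lyapunov theorem), set $w_m:=\mu\tilde w_r+\nu\ell+c_0$ for small positive $\mu,\nu,c_0$, and replace this by its sup-convolution
\[
w_m^\gl(x):=\sup_{y\in\lbar\gO}\Bigl(\mu\tilde w_r(y)+\nu\ell(y)+c_0-\tfrac{1}{2\gl}|x-y|^2\Bigr),
\]
which automatically satisfies $D^2 w_m^\gl\leq \gl^{-1} I$ in the viscosity sense; choosing $\gl=\eta^{-1}$ and exploiting the convexity of $H_\ga$ in $p$ propagates the strict Hamilton--Jacobi supersolution property up to an arbitrarily small loss absorbed in the final choice of $\eta$, while the value bounds $0<\min w_m^\gl\leq\max w_m^\gl<m$ follow by taking $\mu,\nu,\gl$ sufficiently small and $c_0\in(0,m-\mu\max\tilde w_r-\nu\max\ell)$.

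The main obstacle is the degeneracy of the Hamiltonian at the origin: because $b(0)=0$, the strict inequality at $x=0$ reduces to $\ga(0)Dw_m\cdot Dw_m\geq\eta$, which forces $|Dw_m(0)|$ bounded away from zero and therefore forbids $w_m$ from having a local extremum at $0$ in any generalized sense. This rules out the most natural candidates (scalar multiples of $V_\ga$ or radially symmetric Lyapunov functions centered at $0$), since these all have $0$ in their sub- and superdifferential at the origin. It is precisely the reverse-quasipotential ingredient, oriented toward the boundary target $y^\star$, that produces level sets of $w_m$ transversal to the orbits of $b$ even at the origin; verifying the supersolution property in the viscosity sense at the corners generated by the sup-convolution is the delicate part of the argument, and is carried out in detail in \cite{IS2015}.
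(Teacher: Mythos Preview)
The paper does not actually prove this lemma; immediately before stating Lemmata~\ref{lem:v_r} and \ref{lem:w_m} it says ``for their proof we refer to \cite{IS2015}'', which is exactly where your own sketch ultimately defers. Your overall strategy---targeting a boundary minimizer $y^\star\in\argmin(V_\ga|\pl\gO)$ via the reversed dynamics so that the level sets of $w_m$ remain transversal to $b$ even at the origin---is the correct intuition and precisely addresses the degeneracy at $x=0$ that you identify as the main obstacle.

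There is, however, a concrete technical slip. The sup-convolution you write,
\[
w_m^\gl(x)=\sup_{y\in\lbar\gO}\Bigl(f(y)-\tfrac{1}{2\gl}|x-y|^2\Bigr),
\]
produces a semi-\emph{convex} function, i.e.\ $D^2 w_m^\gl\geq -\gl^{-1}I$, not the semi-\emph{concavity} bound $D^2 w_m^\gl\leq \gl^{-1}I$ that the lemma requires. The operation that yields an upper Hessian bound is the \emph{inf}-convolution, and it is also inf-convolution (not sup-convolution) that propagates the viscosity \emph{super}solution property, so both issues are repaired by the same correction. In fact, since your pre-convolution candidate $\mu\tilde w_r+\nu\ell+c_0$ is already in $C^2(\lbar\gO)$ by construction, no convolution is needed at all: one may simply shrink $\eta$ so that $\eta^{-1}\geq \|D^2(\mu\tilde w_r+\nu\ell)\|_{L^\infty(\gO)}$.

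A secondary caveat: the ``analogue of Lemma~\ref{lem:v_r} for $\check H$'' is not an immediate corollary of Lemma~\ref{lem:v_r}, because that lemma's proof in \cite{IS2015} uses the asymptotic stability of the origin for $\dot X=b(X)$, whereas under $-b$ the origin is a repeller and the relevant object is an exit-type value function toward a boundary target rather than a reach-the-origin quasipotential. This is indeed handled in \cite{IS2015}, but it is not a one-line invocation.
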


We continue with the proof  of  Proposition~\ref{thm:short} which parallels 
that of 
\cite{IS2015}*{Theorem 8}.

\bpr[Proof of Proposition~\ref{thm:short}] For $r\in(0,\,r_0)$ to be fixed below, 
let $v=v_r\in C^2(\bar \Omega)$ (for notational simplicity we omit the subscript $r$ in what follows) and $\eta>0$ be given 
by  Lemma~\ref{lem:v_r},  
set, for  $x\in\lbar\gO$,
\[   
w^\ep(x)
:=\exp\left(\fr{v(x)-m+2r}{\ep}\right), 
\]
compute, for any  $(x,t)\in Q$, 
\[\begin{aligned}
\ep\tr[a^\ep(x,t) &D^2 w^\ep]+b(x)\cdot Dw^\ep
\\&=\fr{w^\ep}{\ep}\left(a^\ep(x,t)Dv\cdot Dv+b\cdot Dv
+\ep \tr[a^\ep(x,t)D^2 v]\right)
\\&\leq \fr{w^\ep}{\ep}\left(\ga(x)Dv\cdot Dv+b(x)\cdot Dv
+\ep \tr[a^\ep(x,t)D^2 v]\right)
\\&\leq \fr{w^\ep}{\ep}\left(H_\ga(x,Dv)
+\ep \tr[a^\ep(x,t)D^2 v]\right).
\end{aligned}
\]
and  choose $\ep_0\in(0,\,1)$ so that 
\[
\ep_0\left(\tr a^\ep(x,t)D^2v\right)_+\leq \min\{\eta,\,r,\,1\};
\]
note that $\ep_0$ can be chosen so as to depend on $a^\ep$ only through $\gth_0$.   
\smallskip

 We assume henceforth that $\ep\in (0,\,\ep_0)$ and observe that, 
from the computation above, we get 
\beq\lab{eq:short-pde}
\ep\tr[a^\ep(x,t) D^2 w^\ep]+b(x)\cdot Dw^\ep \leq \bcases
0 \ \text{ for all }(x,t)\in \gO\setminus B_r\,\tim\,(0,\,\infty),\\[1mm]
\fr{2}{\ep}w^\ep \  \text{ for all }\ (x,t)\in B_r\,\tim\,(0,\infty).
\ecases
\eeq

 Let $C_0>0$ be a Lipschitz bound of $b$, and note 
that, if $H_\ga(x,p)\leq 0$, then $|p|\leq C_0\gth_0^{-1}$, which implies that 
$V_\ga(x)\leq C_0|x|^2/(2\gth_0)\leq C_0r^2/(2\gth_0)$ for all $x\in B_r$. 
We may thus assume by replacing, if needed, $r>0$ by a smaller number 
that $V_\ga\leq r$ in $B_r$. 
Accordingly we have 
\[
v-m+2r\leq V_\ga-m+3r\leq -m+4r \ \text{ in } \ B_r,
\]
and
\beq\lab{eq:short-ineq1}
w^\ep \leq \exp\left(\fr{-m+4r}{\ep}\right)  \ \text{ in } \ B_r.
\eeq

Observe also that 
\[
v-m+2r>V_\ga -m+r\geq r \  \text{ in } \  \lbar\gO\setminus \gS_\ga^m, 
\]
and
\beq\lab{eq:w^ep1}
w^\ep>\exp\left(\fr{r}{\ep}\right) \  \text{ in } \  \lbar\gO\setminus 
\gS_\ga^m.
\eeq

 Next set $\,d_\ep=\fr{2}\ep\exp(\fr{-m+4r}{\ep})\,$ and 
\[
z^\ep(x,t)=w^\ep(x)+d_\ep t \ \ \ \text{ for }\ 
(x,t)\in\lbar\gO\tim[0,\,\infty).
\]

It is immediate  from \eqref{eq:short-pde} and \eqref{eq:short-ineq1}
that  
\beq\lab{eq:short-pde1}
z^\ep_t\geq \ep\tr[a^\ep D^2z^\ep]+b\cdot Dz^\ep \ \text{ in } \ Q.
\eeq

 We choose $C_1>0$ so that, for all $\ep\in(0,\,1)$, 
\[
u^\ep \leq C_1 \  \text{ on  } \ \lbar Q,  
\] 
and by replacing,  if necessary, $\ep_0>0$ by a smaller number we may assume that, for all  $\ep\in(0,\,\ep_0)$, 
\[
C_1<\exp\left(\fr{r}{\ep}\right).
\]

 It follows  from \eqref{eq:w^ep1} that 
\[
z^\ep\geq w^\ep\geq \exp\left(\fr{r}{\ep}\right)>C_1
\geq u^\ep \ \text{ on} \ (\lbar\gO\setminus \gS_\ga^m)\tim[0,\,\infty);
\]
note that, since $m<M_\ga$, we have 
$\,\pl\gO\subset \lbar\gO\setminus \gS_\ga^m$. 
\smallskip

 On the other hand, for any $x\in \gS_\ga^m$, we have 
\[
z^\ep(x,0)=w^\ep(x)>0\geq u^\ep(x,0),
\]
and, hence,  
\[
u^\ep\leq z^\ep \ \text{ on } \ \pbr Q.
\]

  We find from the above,  \eqref{eq:short-pde1} and the comparison principle that 
\[
u^\ep\leq z^{\ep} \ \text{ on  }  \  \lbar Q,
\]
and, in particular, for any $t\in[0,\,\exp((m-5r)/\ep)]$, 
\[
u^\ep(0,t)\leq z^\ep(0,t)\leq w^\ep(0)+\fr 2\ep\exp\left(\fr {-r}\ep\right)
\leq
 \exp\left(\fr{-m+3r}{\ep}\right)+\fr 2\ep\exp\left(\fr {-r}\ep\right). 
\]

It is now clear that, for a given $\gd>0$, we may choose $r>0$ and $\ep_0\in(0,\,1)$ 
so that if $\ep\in(0,\,\ep_0)$, then 
\[
u^\ep(0,t)
\leq \exp\left(\fr{-m+3r}{\ep}\right)+\fr 2\ep\exp\left(\fr {-r}\ep\right)
<\gd\quad\text{ for all }\ t\in[0,\,\exp((m-\gd)/\ep)].\qedhere 
\]
\epr

We continue with

\bproof[Proof of Proposition~\ref{thm:long}]   
We fix $r\in(0,\,r_0)$ small enough so that, as in the previous proof, $V_\ga\leq r\,$ in $B_r,$ 
and $\,m-5r>M_\ga$.  
In view of 
Lemmata~\ref{lem:v_r} and \ref{lem:w_m}, 
we may choose $v\in C^2(\lbar\gO)$, $w\in\Lip(\lbar\gO)$ and $\eta>0$ so that, in addition to \eqref{takis10}, $0<\min_{\lbar\gO}w<\max_{\lbar\gO}w<m-5r$, 
and, in the viscosity supersolution sense,
\[
H_\ga(x,-Dw)\geq \eta \quad\text{ and }\quad 
D^2w\leq \eta^{-1}I \quad\text{ in }\ \gO. 
\] 

Setting $u=-w$, $\rho^-=\min_{\lbar\gO}w$ and $\rho^+=\max_{\lbar\gO}w$, 
we get that $\rho^+<m-5r$, \ $0>-\rho^-\geq u\geq -\rho^+$ on $\lbar\gO$ and, 
in the viscosity subsolution sense, 
\[
H_\ga(x,Du)\geq \eta \quad\text{ and }\quad 
D^2u\geq -\eta^{-1}I \quad\text{ in }\ \gO. 
\]

For $\ep\in(0,\,1)$, we set 
\[
z^\ep=-\exp\left(\fr{v-m+2r}{\ep}\right)+\exp\left(\fr{u}{\ep}\right)
-\exp\left(\fr{-\rho^-}{\ep}\right),
\]
and find that, in the viscosity subsolution sense,
\[\begin{aligned}
\ep\tr[a^\ep D^2z^\ep]+b\cdot Dz^\ep
\geq&  -\fr 1\ep \exp\left(\fr{v-m+2r}{\ep}\right)\left(
H_\ga(x,Dv)+\ep\tr[a^\ep D^2v]\right)
\\&+\fr 1\ep \exp\left(\fr{u}{\ep}\right)\left(H_\ga(x,Du)+\ep\tr[a^\ep D^2u]\right)
\quad\text{ in }\ Q.
\end{aligned}
\]

Let $\ep_0\in(0,\,1)$ be a constant to be specified later and assume henceforth that $\ep\in(0,\,\ep_0)$.  
Observing that   
in the viscosity subsolution sense, 
\[
\ep\tr[a^\ep D^2u]\geq -\eta^{-1}\tr a^\ep\geq -n(\gth_0\eta)^{-1}\quad\text{ in }\ Q,
\]
and 
\[
\tr[a^\ep D^2v]\leq \|D^2v\|_{L^\infty(\gO)}\tr a^\ep
\leq n\gth_0^{-1}\|D^2v\|_{L^\infty(\gO)} \quad\text{ in }\ Q,
\]
and, if for $x \in \lbar \varOmega$,  
\[\begin{aligned}
f(x):=&-\fr 1\ep \exp\left(\fr{v(x)-m+2r}{\ep}\right)\left(
H_\ga(x,Dv(x))+\ep n\gth_0^{-1}\|D^2v\|_{L^\infty(\gO)}\right)
\\&+\fr 1\ep \exp\left(\fr{u(x)}{\ep}\right)\left(\eta-\ep n(\eta\gth_0)^{-1}\right),
\end{aligned}
\]
we obtain, in the viscosity subsolution sense,
\beq\lab{eq:long1}
\ep\tr[a^\ep D^2z^\ep]+b\cdot Dz^\ep
\geq f \ \text{ in }\ Q.
\eeq

 Choosing $\ep_0\in(0,\,1)$ so that
\[
\ep_0n\gth_0^{-1}\|D^2v\|_{L^\infty(\gO)}\leq \min\{\eta,\,1\}
\ \text{ and } \ 
\ep_0 n(\eta\gth_0)^{-1}\leq \fr \eta 2,
\]
we get 
\[
\eta-\ep n(\eta\gth_0)^{-1}\geq \fr \eta 2 \ \ \ \text{and} \  \ \ 
H_\ga(x,Dv)+\ep n\gth_0^{-1}\|D^2v\|_{L^\infty(\gO)}
\leq \bcases
0&\text{ for all }\ x\in \gO\setminus B_r,\\
2&\text{ for all }\ x\in B_r,
\ecases
\] 
and, accordingly,
\begin{align*}
f
&\geq 
\bcases
0 &\text{ in } \ \gO\setminus B_r,\\[1mm]
\disp 
-\fr 2\ep \exp\left(\fr{-m+4r}{\ep}\right)+\fr{\eta}{2\ep}\exp\left(\fr{-\rho^+}{\ep}\right) 
&\text{ in  } \ B_r.
\ecases
\end{align*}

Since $\rho^+<m-5r$, we have 
\[\begin{aligned}
-2\exp\left(\fr{-m+4r}{\ep}\right)+\fr{\eta}{2}\exp\left(\fr{-\rho^+}{\ep}\right)
&\,\geq 
-2\exp\left(\fr{-\rho^+-r}{\ep}\right)+\fr{\eta}{2}\exp\left(\fr{-\rho^+}{\ep}\right)
\\&\,=
\exp\left(\fr{-\rho^+}{\ep}\right)\left(-2\exp\left(\fr{-r}{\ep}\right)+\fr{\eta}{2}\right).
\end{aligned}
\]

We may assume by replacing $\ep_0\in(0,\,1)$ by a smaller number 
that
\[
2\exp\left(\fr{-r}{\ep_0}\right)\leq \fr{\eta}{2},
\]
and, therefore,
\[
-2\exp\left(\fr{-m+4r}{\ep}\right)+\fr{\eta}{2}\exp\left(\fr{-\rho^+}{\ep}\right)\geq 0,
\]
which ensures that $f\geq 0$ in  $\gO$, and, hence,   $z^\ep$, as a function of $(x,t)\in Q$, is a subsolution of \eqref{eq:l-pde}.

 Next observe that 
\[
z^\ep<0 \  \text{ on } \  \lbar\gO, 
\]
and, if $V_\ga(x)>m$,   
\[
z^\ep(x)\leq -\exp\left(\fr{V_\ga(x)-m+r}{\ep}\right)
\leq -\exp\left(\fr{r}{\ep}\right).
\] 

Fix $C_1>0$ so 
that, for $\ep\in(0,\,1)$,  $u^\ep\geq -C_1$ on $\lbar Q$,
and,  assume henceforth 
that $\ep_0\in(0,\,1)$ is small enough so that
\[
\exp\left(\fr{r}{\ep_0}\right)\geq C_1. 
\]

Consequently, we have 
\[
z^\ep\leq
\bcases
-\exp\left(\fr{r}{\ep}\right)\leq -C_1\leq u^\ep 
&\text{ in } \ (\lbar \gO \setminus \gS_\ga^m)\tim[0,\,T(\ep)),\\
0\leq u^\ep(\cdot,0) &\text{ in }  \gS_\ga^m,\\
0\leq u^\ep &\text{ in} \ (\gS_\ga^m\cap\pl\gO)\tim(0,\,T(\ep)),
\ecases
\]
that is  
\[z^\ep\leq u^\ep \ \text{ on  } \  \pbr Q_{T(\ep)},
\]
and, hence, 
by the comparison principle, 
\[
z^\ep\leq u^\ep\ \text{ on } \  \lbar Q_{T(\ep)}. 
\]

Finally, we note that
\[\begin{aligned}
z^\ep(0)&\,=-\exp\left(\fr{v(0)-m+2r}{\ep}\right)
-\exp\left(\fr{-\rho^-}{\ep}\right)
\\&\,\geq -\exp\left(\fr{-m+4r}{\ep}\right)
-\exp\left(\fr{-\rho^-}{\ep}\right)
\to 0 \quad\text{ as }\ep\to 0,
\end{aligned}
\]
which completes the proof.  
\epr

The following corollary is a variant of \cite{IS2015}*{Theorem 4.1}. 
Since its proof is  similar to the one of Proposition~\ref{thm:long} above, here we present only an outline.

\begin{cor}\lab{cor:long} Assume \eqref{B}, fix $\ga\in C(\lbar\gO,\bbS^n(\gth_0))$,  $T(\ep)\in (0,\,\infty]$, $C>0$ and $m>M_\ga$. If, for  $a^\ep \in C(\bar Q_{T(\ep)};\bbS^n(\gth_0))$   
such that $a^\ep\geq \ga$ in $Q_{T(\ep)}$,  
$u^\ep\in C(\lbar Q_{T(\ep)})\cap C^{2,1}(Q_{T(\ep)})$  
is a supersolution of \eqref{eq:l-pde} in $Q_{T(\ep)}$ such that 
\[
u^\ep\geq 0\quad\text{ in } \ (\gS_\ga^m\cap\pl\gO)\tim(0,T(\ep)) \  \text{ and} \  u^\ep\geq -C \ \ \text{ in } \ Q_{T(\ep)}, 
\]
then, for any $\gd>0$, there exists $\ep_0\in(0,\,1)$ such that, if 
$\ep\in(0,\,\ep_0)$, then
\[
u^\ep(0,t)\geq -\gd \quad\text{ for all }\ t\in[\exp(m/\ep),\,T(\ep)).
\]
\end{cor}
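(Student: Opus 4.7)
The plan is to modify the barrier of Proposition~\ref{thm:long} by subtracting a time-dependent term that absorbs the missing initial bound $u^\ep(\cdot,0)\geq 0$ on $\gS_\ga^m$. Fix an intermediate $m'\in(M_\ga,\,m)$ and apply Lemma~\ref{lem:w_m} with $m'$ in place of $m$ to produce $w\in\Lip(\lbar\gO)$ with $\rho^+:=\max_{\lbar\gO}w<m'$. Choose $r>0$ small and $v=v_r\in C^2(\lbar\gO)$ as in Lemma~\ref{lem:v_r}, and, exactly as in the proof of Proposition~\ref{thm:long}, set
\[
z^\ep(x):=-\exp\!\Bigl(\tfrac{v(x)-m+2r}{\ep}\Bigr)+\exp\!\Bigl(\tfrac{-w(x)}{\ep}\Bigr)-\exp\!\Bigl(\tfrac{-\rho^-}{\ep}\Bigr),\quad\rho^-:=\min_{\lbar\gO}w.
\]
The barrier for the corollary is then
\[
\tilde z^\ep(x,t):=z^\ep(x)-C\exp(-t/T_\ep),\qquad T_\ep:=\exp(m'/\ep).
\]

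The first step is to check that $\tilde z^\ep$ is a subsolution of \eqref{eq:l-pde} in $Q_{T(\ep)}$ for $\ep$ small. Since the correction $-Ce^{-t/T_\ep}$ is constant in $x$, its only effect on the parabolic operator is to add $(C/T_\ep)e^{-t/T_\ep}\leq Ce^{-m'/\ep}$ on the $\pl_t$ side. The computation carried out in the proof of Proposition~\ref{thm:long} furnishes a pointwise lower bound of the shape $f\geq\fr{\eta}{4\ep}e^{-\rho^+/\ep}$ on all of $\gO$ (for $\ep$ small; in $B_r$ this uses $\rho^+<m-5r$, and in $\gO\setminus B_r$ it uses $H_\ga(\cdot,Dv)\leq-\eta$). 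Because $\rho^+<m'$, one has $Ce^{-m'/\ep}\leq\fr{\eta}{4\ep}e^{-\rho^+/\ep}$ for $\ep$ small, so the added term is dominated by $f$ and the subsolution property is preserved.

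Next I verify $\tilde z^\ep\leq u^\ep$ on $\pbr Q_{T(\ep)}$. Since $z^\ep\leq 0$ on $\lbar\gO$ (as in Proposition~\ref{thm:long}), at $t=0$ one has $\tilde z^\ep(\cdot,0)\leq-C\leq u^\ep(\cdot,0)$. On $(\gS_\ga^m\cap\pl\gO)\tim(0,T(\ep))$, $\tilde z^\ep<0\leq u^\ep$. On $(\pl\gO\setminus\gS_\ga^m)\tim(0,T(\ep))$ the bound $z^\ep\leq -e^{r/\ep}$ from the proof of Proposition~\ref{thm:long} gives $\tilde z^\ep\leq -e^{r/\ep}\leq-C\leq u^\ep$ once $\ep$ is small. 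The comparison principle then yields $\tilde z^\ep\leq u^\ep$ in $Q_{T(\ep)}$, so that for $t\in[\exp(m/\ep),\,T(\ep))$,
\[
u^\ep(0,t)\geq\tilde z^\ep(0,t)=z^\ep(0)-Ce^{-t/T_\ep}\geq z^\ep(0)-C\exp\!\bigl(-\exp((m-m')/\ep)\bigr),
\]
and both terms tend to $0$ as $\ep\to 0+$: the first by exactly the endgame of Proposition~\ref{thm:long}, and the second double-exponentially because $m>m'$. Hence $u^\ep(0,t)\geq-\gd$ for $\ep<\ep_0(\gd)$ and $t\in[\exp(m/\ep),T(\ep))$.

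The delicate point is the calibration of $T_\ep$: it must be small enough that $e^{-t/T_\ep}$ is uniformly negligible on $[\exp(m/\ep),T(\ep))$ and yet large enough that $(C/T_\ep)e^{-t/T_\ep}$ is dominated by the positive residual $f$ that already appears in Proposition~\ref{thm:long}. The interposed scale $M_\ga<\rho^+<m'<m$ threads this needle and is the only new ingredient beyond the argument for Proposition~\ref{thm:long}.
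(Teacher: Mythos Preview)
Your argument is correct but differs from the paper's.  The paper keeps the same $w$ as in Proposition~\ref{thm:long} (so $\rho^+<m-5r$) and modifies the barrier by a \emph{linear} time term,
\[
g^\ep(x,t)=z^\ep(x)-C+\tfrac{\gamma t}{4}\exp(-\rho^+/\ep),\qquad \gamma\in(0,\eta],
\]
which is a subsolution because $g^\ep_t=\tfrac{\gamma}{4}e^{-\rho^+/\ep}\leq f$.  Comparison then holds only on $[0,\tau(\ep)]$ with $\tau(\ep)=\tfrac{4C}{\gamma}e^{\rho^+/\ep}$, and one \emph{varies} $\gamma$ over $(0,\eta]$ to cover all times $t\geq \tfrac{4C}{\eta}e^{\rho^+/\ep}$, which is $\leq e^{m/\ep}$ for small $\ep$.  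Your exponential-decay correction $-Ce^{-t/T_\ep}$ with the interposed scale $\rho^+<m'<m$ accomplishes the same thing with a single barrier valid for all $t$ at once, at the mild cost of introducing the auxiliary level $m'$ (and hence invoking Lemma~\ref{lem:w_m} with $m'$ rather than $m$).  Both routes exploit the same residual $f\gtrsim e^{-\rho^+/\ep}$; yours is arguably cleaner in that it avoids the parameter-sweep, while the paper's stays closer to the construction already in hand from Proposition~\ref{thm:long}.
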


\bpr[Outline of proof] Let $r\in(0,\,r_0)$, $\eta>0$, $\rho^\pm$, $v\in C^2(\lbar\gO)$, and $w,\,u,\,z^\ep,\,f\in \Lip(\lbar\gO)$ be the same as in the proof of Proposition
\ref{thm:long}. According to \eqref{eq:long1}, we have in the viscosity subsolution sense,
\[
\ep\tr[a^\ep D^2z^\ep]+b\cdot Dz^\ep\geq f \ \ \text{ in }Q.
\]

Choosing $\ep_0\in(0,\,1)$ so that 
\[
\ep_0n\gth_0^{-1}\|D^2v\|_{L^\infty(\gO)}
\leq \min\{\eta,\,1\}, \ \ 
\ep_0n(\eta\gth)^{-1}\leq \fr \eta 2 
\ \ \text{ and } \ \
2\exp\left(\fr{-r}{\ep_0}\right)\leq \fr{\eta}{4}
\]
and noting that, in $\gO\setminus B_r$ and $\ep\in(0,\,\ep_0)$,
\[
f\geq \fr 1\ep \exp\left(\fr{u}\ep\right)
\left(\eta-\ep n(\eta\gth)^{-1}\right)\geq 
\fr{\eta}2\exp\left(\fr{-\rho^+}\ep\right),
\]
we compute, as in the proof of Proposition~\ref{thm:long}, to get,  for any $\ep\in(0,\,\ep_0)$, that  
\[
f\geq \exp\left(\fr{-\rho^+}{\ep}\right)\left(-2\exp\left(\fr{-r}{\ep}\right)+\fr\eta 2\right) 
\geq \fr{\eta}{4}\exp\left(-\fr{\rho^+}{\ep}\right) \ \text{in } \ \gO. 
\]

Now, we fix $\gamma\in(0,\,\eta]$, set,   for
$(x,t)\in\lbar Q\ \text{ and }\ \ep\in(0,\,\ep_0),$
\[
g^\ep(x,t):=z^\ep(x)-C+\fr {\gamma t} 4\exp\left(-\fr{\rho^+}{\ep}\right),
\]
and observe that, for each $\ep\in(0,\,\ep_0)$, $g^\ep$ is
a subsolution of \eqref{eq:l-pde}.

Let
\[\tau(\ep)=\fr{4C}{\gamma}\exp(\fr{\rho^+}{\ep}),
\] 
and 
observe that, for any $(x,t)\in\lbar\gO\tim[0,\,\tau(\ep)]$ \ such that \ $V_\ga(x)>m$,   
\[
g^\ep(x,t)\leq z^\ep(x)\leq -\exp\left(\fr r\ep\right).
\] 

We may assume by replacing $\ep_0>0$ by a smaller number if necessary that 
\[
\exp\left(\fr r{\ep_0}\right)\geq C.
\]

Accordingly, we have
\[
g^\ep\leq
\bcases
-\exp\left(\fr{r}{\ep}\right)\leq -C\leq u^\ep
&\text{ in } \  (\lbar \gO \setminus \gS_\ga^m)\tim[0,\,T(\ep)\wedge\tau(\ep)),\\[2pt]
-C\leq u^\ep(\cdot,0) &\text{ in } \ \gS_\ga^m\tim\{0\},\\[2pt]
0\leq u^\ep &\text{ in } \ (\gS_\ga^m\cap\pl\gO)\tim(0,\,T(\ep)\wedge\tau(\ep)).
\ecases
\]

Thus,  
\[g^\ep\leq u^\ep \ \text{ in }\,  
\pbr Q_{T(\ep)\wedge\tau(\ep)},
\]
and, hence, by the comparison principle
\[
g^\ep\leq u^\ep \ \text{ in } \ \lbar 
Q_{T(\ep)\wedge\tau(\ep)}. 
\]

The final step begins by noting that
\[
u^\ep(0,\tau(\ep))\geq g^\ep(0,\tau(\ep))=z^\ep(0) 
\]
and 
\[
z^\ep(0)\geq -\exp\left(\fr{-m+4r}{\ep}\right)
-\exp\left(\fr{-\rho^-}{\ep}\right)
\to 0 \quad\text{ as }\ep\to 0. 
\]

Fix $\gd>0$ and, if necessary, replace $\ep_0$ 
by a smaller number such that 
$z^\ep(0)\geq -\gd$ for all $\ep \in(0,\,\ep_0)$.  
Recalling the definition of $\tau(\ep)$ and observing that 
\[
u^\ep(0,t)\geq z^\ep(0) \quad\text{ if }\ 
t=\fr{4C}\gamma 
\exp\left(\fr{\rho^+}{\ep}\right)<T(\ep)\ \text{ and }\ 0<\gamma\leq\eta, 
\]
we conclude that
\[
u^\ep(0,t)\geq -\gd \ \ \text{ for all }\ 
t\in[(4C/\eta)\exp(\rho^+/\ep),\,T(\ep)) 
\ \text{ and } \ \ep\in(0,\,\ep_0). 
\]

Since $\rho^+<m-5r$, by selecting $\ep_0\in(0,\,1)$ 
sufficiently small, we may assume that 
$(4C/\eta)\exp(\rho^+/\ep)\leq \exp(m/\ep)$ for all $\ep\in(0,\,\ep_0)$, 
which completes the proof. 
\epr

\section{The proofs of Propositions~\ref{thm:A}, \ref{thm:B} and \ref{thm:C}}

\bpr[Proof of Proposition~\ref{thm:A}] Since the arguments are similar 
for both cases when $\gb_1<\gb_2$ and $\gb_1>\gb_2$, here we treat only the case
$\gb_1<\gb_2$.

We argue by contradiction and suppose that
\beq\lab{eq:by-contra}
\limsup_{k\to \infty}\gl_k < M(\gb_2).
\eeq

 Let $\gd>0$ be a constant to be fixed later, define 
$\ga_\gd^+$ and  $\cH_\gd^+$ as in  
Section \ref{sec:quasi-p}, with $\gb_0$ replaced by $\gb_2$,  and, as in Section 
\ref{sec:quasi-p}, 
let $V_\gd^+$ be the maximal subsolution of 
\[
\cH_\gd^+(x,Du)=0 \ \  \ \text{ in } \ \gO, \ \ \ u(0)=0, 
\] 
and set $M_\gd^+=\min_{\pl\gO}V_\gd^+$. 
\smallskip

 Since Proposition~\ref{thm:quasi2-1}  
yields 
\[
\lim_{\gd\to 0+} M_\gd^+ =M(\gb_2), 
\]
in view of \eqref{eq:by-contra}, we may choose  $\gd>0$ so that 
\[
\limsup_{k\to\infty} \gl_k +\gd <M_\gd^+.  
\]

We fix $m\in\R$ so that 
\[\limsup_{k\to\infty} \gl_k +\gd<m 
<M_\gd^+,\] 
and, by passing to a subsequence if necessary,  we may assume that 
\[
\gl_k \leq m-\gd \quad\text{ for all }\ k\in\N.  
\]

Set 
\[
\gS=\{x\in\lbar\gO\mid V_\gd^+(x)\leq m\},
\]
and note that $\gS$ is a compact subset of $\gO$. 
\smallskip
 
 In view of the continuity of the map $t\mapsto u^\ep(0,t)$, 
reselecting,  if needed, $\gb_1$, $\mu_k$ and $\gl_k$, we may assume that, 
for  all $\,t\in[\exp(\mu_k/\ep_k),\,\exp(\gl_k/\ep_k)]$ and  $k\in\N$, 
\beq\lab{eq:(A)1}
\gb_2-\fr \gd 2 <\gb_1\leq u^{\ep_k}(0,t)\leq \gb_2 . 
\eeq

 Now we choose $\gamma\in(0,\,\gd/2)$ small enough, so that 
\beq\lab{eq:(A)4}
\gS\subset \gO_{\gamma}
\quad\text{ and }\quad 
\gb_2-\gb_1>2\gamma.
\eeq

 Proposition~\ref{thm:conclusion3} gives  $\ep_0\in(0,\,1)$ such that, 
if $\ep\in(0,\,\ep_0)$,
\beq\lab{eq:(A)2}
|u^\ep(x,t)-u^\ep(0,t)|<\gamma\quad\text{ for all }\ (x,t)\in\gO_\gamma\tim[\exp\left(a_1/\ep\right),\,\infty). 
\eeq

We assume that $\ep_k< \ep_0$ for all $k\in\N$, and 
combine \eqref{eq:(A)2} and \eqref{eq:(A)1}, to find  
\beq\lab{eq:(A)3}
|u^{\ep_k}(x,t)-\gb_2|\leq\gd \quad \text{ for all }\ (x,t)\in\gO_\gamma\tim
[\exp(\mu_k/\ep_k),\,\exp(\gl_k/\ep_k)],
\eeq
and 
\[
u^{\ep_k}(x,\exp(\mu_k/\ep_k))\leq \gb_1+\gamma 
\quad\text{ for all }\ x\in\gO_\gamma \ \text{ and }\ k\in\N.
\]

Since  \eqref{eq:(A)3} implies that 
\[
a(x,u^{\ep_k}(x,t))\leq \ga_\gd(x) \ \  \text{ for all }\ 
(x,t)\in \gO\tim[\exp(\mu_k/\ep_k),\,
\exp(\gl_k/\ep_k)],\ k\in\N,
\]
setting 
\[
\bcases
v^k(x,t)=u^{\ep_k}(x,t+\exp(\mu_k/\ep_k))-\gb_1-\gamma, &
\\[1mm]
 a^k(x,t)=a(x,u^{\ep_k}(x,t+\exp(\mu_k/\ep_k))),&
\ecases\]
we see that 
\[
v^k_t=\ep_k\tr[a^k(x,t)D^2v^k]+b(x)\cdot Dv^k
\quad\text{ for all }\ (x,t)\in Q.
\]

Furthermore, since $v^k(\cdot,0)\leq 0$ in $\gO_\gamma,$
it follows that 
\[
v^k(\cdot ,0)\leq 0 \ \text{ in  } \ \gS.
\]

An application of Proposition~\ref{thm:short}, with $\ep_k$, $v^k$ and $\gamma$  
in place of $\ep$, $u^\ep$ and $\gd$ respectively, guarantees that,
for sufficiently large $k$, we have
\[
v^k(0,t)\leq \gamma \quad \text{ for all }\ t
\in [0,\,\exp(\gl_k/\ep_k)-\exp(\mu_k/\ep_k)],
\] 
which, in particular, yields
\[
v^k(0,\exp(\gl_k/\ep_k)-\exp(\mu_k/\ep_k))\leq \gamma.
\]

This shows that
\[
u^{\ep_k}(0,\exp(\gl_k/\ep_k))\leq \gb_1+2\gamma<\gb_2,
\] 
which is a contradiction. 
\epr

\bpr[Proof of Proposition~\ref{thm:B}] Since the arguments are similar, here we only consider the case 
where $\gb_2<\gb_1$ holds.
\smallskip

 We suppose that 
\beq\lab{eq:(B)1}
G^-(\gb_2)>\gb_2,
\eeq
and obtain a contradiction. 

  For a small constant $\gd>0$ to be chosen later,  define  $\ga_\gd^-$ and  $\cH_\gd^-$  as in 
Section \ref{sec:quasi-p}, with $\gb_0$ replaced by $\gb_2$,  let $V_\gd^-$ be  the quasi-potential corresponding to $(\ga_\gd^-,b)$, that is 
the maximal subsolution of 
\[
\cH_\gd^-(x,Du)=0 \ \  \ \text{ in } \ \gO\quad\text{ and } \ \ \ u(0)=0. 
\]  
and  $V^{\gb_2}$ the quasi-potential corresponding to 
the pair $(a(\cdot,\gb_2),b)$,  
set 
\[
M_\gd^-=\min_{\pl\gO}V_\gd^-,\quad 
\gG_\gd^-=\argmin(V_\gd^-|\pl\gO) \quad \text{ and }\quad 
\gG^{\gb_2}=\argmin(V^{\gb_2}|\pl\gO),
\]
and observe that
\[
G^-(\gb_2)=\min_{\gG^{\gb_2}}g. 
\] 

Due to \eqref{eq:(B)1}, we have
\[
\min_{\gG^{\gb_2}}g>\gb_2. 
\]

Furthermore, in view of \eqref{eq:quasi2-5}, we may choose $\gd>0$ so that 
\beq\lab{eq:(B)0}
\min_{\gG_\gd^-}g>\gb_2+\gd.
\eeq

 Finally replacing, if necessary, $\gb_1$, $\mu_k$ and $\gl_k$  we may assume  that
\[
\gb_1\geq u^\ep(0,t)\geq \gb_2 \ \ \text{ for all }\ 
t\in[\exp(\mu_k/\ep_k),\,
\exp(\gl_k/\ep_k)],\ k\in\N, 
\]
and 
\beq\lab{eq:(B)-1}
\gb_1<\gb_2+\gd/2. 
\eeq

 Since, by the maximum principle, $g_{\min}\leq u^\ep\leq g_{\max}$ 
in $\lbar Q$, we find that  
Theorem~\ref{thm:conclusion3} yields  $\ep_0\in(0,\,1)$ such that, if $\ep\in(0,\,\ep_0)$, then  
\beq\lab{eq:(B)1-2}
|u^\ep-u^\ep(0,t)|<\gd/2 \  \text{ in }\ 
 \gO_{\gd/2}\tim[\exp(a_1/\ep),\,\infty). 
\eeq

Consequently, if $k\in\N$ is sufficiently large, then 
$\,\ep_k<\ep_0\,$ and 
\beq\lab{eq:(B)2}
|u^{\ep_k}-\gb_2|<\gd \ \text{ in} \ 
\gO_{\gd/2}\tim[\exp(\mu_k/\ep_k),\,\exp(\gl_k/\ep_k)].
\eeq

Henceforth,  passing if necessary to a subsequence, we assume that \eqref{eq:(B)2} 
holds for all $k\in\N$ and, thus 
\beq\lab{eq:(B)3}
\ga_\gd^-(x)\leq a(x,u^{\ep_k}(x,t))\quad\text{ for all }\ 
(x,t)\in\lbar\gO\tim[\exp(\mu_k/\ep_k),\,\exp(\gl_k/\ep_k)],\ k\in\N.
\eeq

We set $\,\Pi=\{x\in\pl\gO\mid g(x)> \gb_2+\gd\}$ 
and note that, in view of \eqref{eq:(B)0}, $\Pi$ is an open neighborhood, relative to $\pl\gO$,
of $\gG_\gd^-$ and 
\[
\{x\in\lbar\gO\mid V_\gd^-(x)\leq M_\gd^-\}
=\{x\in\gO\mid V_\gd^-(x)\leq M_\gd^-\}\cup\gG_\gd^-
\subset \gO^\Pi, 
\] 
and deduce, for $\gamma>0$ sufficiently small, 
\beq\lab{May9}
\{x\in\lbar\gO\mid V_\gd^-(x)\leq M_\gd^-+\gamma\}
\subset \gO^\Pi_{\gamma}. 
\eeq

In view of \eqref{eq:(B)-1},  we observe that 
\beq\lab{eq:(B)-2}
g>\gb_2+\gd>\gb_1 \ \text{ in }\  \Pi.
\eeq

We fix $\gamma>0$ so that \eqref{May9} and $\,5\gamma<\gb_1-\gb_2\,$ hold, set  
\[
\gS:=\{x\in \lbar\gO\mid V_\gd^-(x)\leq M_\gd^-+\gamma\} \subset\gO^\Pi_\gamma.
\] 

Noting that $\gb_1>\gb_1-4\gamma>\gb_2$, we 
select a sequence $\{\nu_k\}$ so that 
\beq\lab{eq:(B)4}
\bcases
\mu_k<\nu_k<\gl_k,\qquad 
u^{\ep_k}(0,\exp(\nu_k/\ep_k))=\gb_1-3\gamma\quad\text{ for all }\ k\in\N,&\\[1mm]
\gb_1\geq u^{\ep_k}(0,t)\geq \gb_1-3\gamma \quad \text{ for all }\ 
t\in[\exp(\mu_k/\ep_k),\,\exp(\nu_k/\ep_k)],\ k\in\N. 
\ecases
\eeq

Now we show that, for some $\rho>0$ and sufficiently large $k\in\N$,
\beq\label{eq:(B)5}
\exp(\nu_k/\ep_k)\geq \exp(\mu_k/\ep_k)+\exp(\rho/\ep_k). 
\eeq

For this, similarly  to \eqref{eq:(B)1-2}, we use Proposition~\ref{thm:conclusion3}, 
to find that,  for some $r\in(0,\,r_0)$ and sufficiently large $k\in\N$, 
\[
|u^{\ep_k}-u^{\ep_k}(0,\cdot)|<  \gamma \ \text{ in }\ 
B_r\tim[\exp(a_1/\ep_k), \infty).
\] 

For every such large $k\in\N$, we set
\[
v^k(x,t):=u^{\ep_k}(x,t+\exp(\mu_k/\ep_k))-\gb_1+\gamma \quad\text{ for } \ 
(x,t)\in \lbar Q,
\]
and note that 
\[
v^k(\cdot,0)\geq 0 \ \text{ in }\ B_r.
\]

We apply Proposition~\ref{thm:short}, with $\ep$, $u^\ep$ and $\ga$ 
replaced respectively by $\ep_k$, $-v^k$ and $\gth_0^{-1}I$, to deduce that,
for some $\rho>0$,
\[
-v^k(0,t)\leq \gamma \quad\text{ for all }\ t\in[0,\,\exp(\rho/\ep_k)],
\]    
that is, 
\[
u^{\ep_k}(0,t)\geq \gb_1-2\gamma\quad\text{ for all }\ t\in 
[\exp(\mu_k/\ep_k),\,\exp(\mu_k/\ep_k)+\exp(\rho/\ep_k)], 
\]
which, in view  of the choice of $\nu_k$,  implies that 
\eqref{eq:(B)5} holds for sufficiently large $k\in\N$. 

In what follows we may assume by replacing if necessary $\{\ep_k\}$ by a further subsequence 
that \eqref{eq:(B)5} is satisfied for some $\rho>0$ and all $k\in\N$. 
We set 
\[
w^k(x,t)=u^{\ep_k}(x,t+\exp(\mu_k/\ep_k))-\gb_1+3\gamma 
\quad\text{ for }\ (x,t)\in\lbar Q,\ k\in\N,
\]
and note that, in view of  \eqref{eq:(B)4} and \eqref{eq:(B)-2}, 
\[
\bcases
w^k(0,t)\geq 0 & \text{ for all }\ t\in[0,\, \exp(\nu_k/\ep_k)-\exp(\mu_k/\ep_k)]
\\[3pt]
w^k(x,t)=g(x)-\gb_1+3\gamma\geq 0 & \text{ for all }\ 
(x,t)\in\Pi\tim[0,\,\infty).
\ecases
\]

Recalling \eqref{eq:(B)5}, we apply 
Theorem~\ref{thm:conclusion}, with $\ep$ and $u^\ep$ 
replaced by $\ep_k$ and $-w_k$, to get,  for sufficiently large $k$,
\[
-w^k(x,\exp(\nu_k/\ep_k)-\exp(\mu_k/\ep_k))\leq \gamma 
\quad\text{ for all }\ x\in \gO^\Pi_\gamma,
\] 
which reads 
\[
u^{\ep_k}(x,\exp(\nu_k/\ep_k))\geq \gb_1-4 \gamma
\quad\text{ for all }\ x\in \gO^\Pi_\gamma. 
\]

Finally, for $(x,t)\in\lbar Q,$ we set 
\[
z^k(x,t)=u^{\ep_k}(x,t+\exp(\nu_k/\ep_k))-\gb_1+4 \gamma, 
\]
observe that, if $k\in\N$ is sufficiently large, then
\[ z^k(\cdot ,0)\geq 0 \ \text{ in }\  \gS \ \text{ and} \ 
z^k=g-\gb_1+4 \gamma\geq 0 \ \text{ in } \ 
\Pi\tim[0,\,\infty),
  \]
and invoke Proposition~\ref{thm:long}, to conclude 
that, for sufficiently large $k\in\N$, 
\[
z^k(0,\exp(\gl_k/\ep_k)-\exp(\nu_k/\ep_k)) \geq -\gamma,
\]
and, hence,
\[
u^{\ep_k}(0,\exp(\gl_k/\ep_k))\geq \gb_1-5\gamma>\gb_2,
\]
which  is a contradiction. 
\epr

\bpr[Proof of Proposition~\ref{thm:C}] Since the arguments are similar, we give the proof under the assumption that 
\beq\label{eq:C3}
G^-(\gb_0)>\gb_0.
\eeq

We suppose that 
\beq\lab{eq:C4} 
\rho_0>M(\gb_0),
\eeq
and obtain a contradiction. 

Fix $\gd>0$ 
and let $\ga_\gd^-$ and  $\cH_\gd^-$  as in 
Section \ref{sec:quasi-p} and $V_\gd^-$ and $V^{\gb_0}$ be the quasi-potentials corresponding to $(\ga_\gd^-,b)$ and $(a(\cdot,\gb_0),b)$ respectively, 
set
\[
M_\gd^-=\min_{\pl\gO}V_\gd^-,\quad 
\gG_\gd^-=\argmin(V_\gd^-|\pl\gO) \quad \text{ and }\quad 
\gG^{\gb_0}=\argmin(V^{\gb_0}|\pl\gO),
\]
and note, in view  of Proposition~\ref{thm:quasi2-1}, \eqref{eq:C3} and \eqref{eq:C4}, that
\[
\liminf_{\gd\to 0+}\min_{\gG_\gd^-}g\geq \min_{\gG^{\gb_0}}g 
=G^-(\gb_0)>\gb_0 \ \text{and } \ \lim_{\gd\to 0+}M_\gd^-=M(\gb_0)<\rho_0.
\] 
Choose $\gd>0$ so that
\beq\label{eq:C5}
\min_{\gG_\gd^-}g>4\gd+\gb_0.
\eeq

For $m> M_\gd^-$ set 
\[
\gS^m:=\{x\in\lbar\gO\mid V_\gd^-(x)\leq m\}
\]
and note that
\[
\limsup_{m\to M_\gd^-+0}\gS^m\cap\pl\gO
= \{x\in\lbar\gO\mid V_\gd^-(x)\leq M_\gd^-\}\cap\pl\gO=\gG_\gd^-.
\]

Hence, we may choose $m\in(M_\gd^-,\,\rho_0]$ so that  
\beq\label{eq:C6}
\gS^m\cap\pl\gO\subset \{x\in\pl\gO\mid g(x)>\gb_0+3\gd\}. 
\eeq

The maximum principle yields  that,   for all 
$(x,t)\in Q$ and $\ep\in(0,\,1)$, $u^\ep(x,t)\in I_g$., while Theorem \ref{thm:conclusion3} implies the existence of  
$\ep_0\in(0,\,1)$ such that for all 
$(x,t)\in \gO_{\gd/2}\tim[\exp((\rho_0-\gd)/\ep),\,\infty)$ 
and $\ep\in(0,\,\ep_0)$, 
\[
|u^\ep(x,t)-u^\ep(0,t)|<\fr \gd 2.  
\]

Our assumptions yield $\gamma>0$ and a sequence $\{\ep_k\}\subset (0,\ep_0)$ such that $\lim_{k \to \infty} \ep_k=0$ and, for all $\rho\in[\rho_0-\gamma,\,\rho_0+\gamma]
\subset (0,\,\infty)$ and $k\in\N$, 
\[
u^{\ep_k}(0,\exp(\rho/\ep_k))\in [\gb_0-\gd/2,\,\gb_0+\gd/2].
\] 

Hence, if $(x,\rho)\in \gO_{\gd/2}\tim[\rho_0-\gamma,\,\rho_0+\gamma]$ 
and $k\in\N$, we get 
\beq\label{eq:C7}
u^{\ep_k}(x,\exp(\rho/\ep_k))\in(\gb_0-\gd,\,\gb_0+\gd),
\eeq
and, moreover,
\beq\label{eq:C8} 
\ga_\gd^-(x)\leq a(x,u^{\ep_k}(x,\exp(\rho/\ep_k)).  
\eeq

Set 
\[
v^\ep(x,t):=u^{\ep}(x,t+\exp((\rho_0-\gamma)/\ep))
-\gb_0-3\gd
 \ \ \text{ for all }(x,t)\in Q,\  
\ep\in(0,\,1),
\]
and
\[
a^\ep(x,t):=a(x,u^{\ep}(x,t+\exp((\rho_0-\gamma)/\ep)))\ \ 
\text{ for all }(x,t)\in Q,\  
\ep\in(0,\,1),
\]
and observe that, 
for  $T_k:=\exp((\rho_0+\gamma)/\ep_k)-\exp((\rho_0-\gamma)/\ep_k)$, $v^\ep$ is a solution of \eqref{eq:l-pde} and \eqref{eq:ibv-g}, 
\[
\ga_\gd^-(x)\leq a^{\ep_k}(x,t) \ \ \ \text{ for all } 
(x,t)\in\gO\tim [0,\,T_k],\ k\in\N, 
\]
and
\[
v^{\ep_k}(x,t)=g(x)-\gb_0-3\gd>0 \ \ \ \text{ for all } 
(x,t)\in\gS^{m}\cap \pl\gO \tim[0,\,T_k]
\]

In view of Corollary \ref{cor:long}, we may assume, by passing to a subsequence, that
\[
v^{\ep_k}(0,t)\geq -\gd  \ \ \ \text{ for all } 
t\in[\exp(m/\ep_k),\,T_k] \ \text{ and} \ k\in\N.
\]

Since $T_k> \exp(m/\ep_k)$ for sufficiently large $k\in\N$, 
we find $k\in\N$ such that
\[
v^{\ep_k}(0,T_k)\geq -\gd, 
\]
which yields the contradiction
\[
u^{\ep_k}(0,\exp((\rho_0+\gamma)/\ep_k))\geq \gb_0+3\gd.
\] 
\epr

\section{The proof of the main theorem} 

The proof of Theorem~\ref{thm:FK} is a relatively easy consequence of 
Propositions~\ref{thm:A}, \ref{thm:B} and \ref{thm:C} 
as shown in \cites{FK2010, FK2012a}.  
For the reader's convenience, we reproduce it here. 
We begin with two lemmata.

\begin{lem} \lab{lem:main} Assume 
\eqref{B} and let
 $u^\ep\in C(\lbar Q)\cap C^{2,1}(Q)$ be a solution of 
\eqref{eq:ql-pde} and \eqref{eq:ibv-g}. For any $\gd>0$ there exist 
$\gl_0>0$ and $\ep_0\in(0,\,1)$ such that 
\beq\lab{eq:main1}
|u^\ep(0,t)-g(0)|\leq \gd \quad\text{ for all }\ t\in[0,\,\exp(\gl_0/\ep)] 
\ \ \text{ and } \ \ \ep\in(0,\,\ep_0). 
\eeq
\end{lem}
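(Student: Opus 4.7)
The plan is to apply Proposition~\ref{thm:short} twice, first to the subsolution $u^\ep - c_0 - \gd$ to get the upper bound $u^\ep(0,t) \leq c_0 + 2\gd$ and then to $-u^\ep + c_0 - \gd$ to get the lower bound $u^\ep(0,t) \geq c_0 - 2\gd$, on a time interval of the form $[0, \exp(\gl_0/\ep))$. After taking $\gd$ half as large, this will yield the statement.

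First I would use the continuity of $g$ at the origin to pick $\eta \in (0, r_0)$ such that $|g(x) - c_0| \leq \gd$ for all $x \in B_\eta$, then choose the constant ``ceiling'' matrix $\ga := \gth_0^{-1} I \in C(\lbar\gO;\bbS^n(\gth_0))$, which satisfies $a(x,c) \leq \ga$ for all $(x,c) \in \lbar\gO\tim\R$ in view of \eqref{ellipticity}. Let $V_\ga$ be the associated quasi-potential, so $V_\ga(0) = 0$, $V_\ga > 0$ on $\lbar\gO\setminus\{0\}$, and $M_\ga = \min_{\pl\gO} V_\ga > 0$. By continuity of $V_\ga$ at the origin, I can select $m \in (0, M_\ga)$ small enough that $\gS_\ga^m = \{V_\ga \leq m\} \subset B_\eta$.

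Next, define $a^\ep(x,t) := a(x, u^\ep(x,t))$, which, because of the continuity of $a$ and the regularity $u^\ep \in C(\lbar Q)\cap C^{2,1}(Q)$, lies in $C(\lbar Q;\bbS^n(\gth_0))$ and satisfies $a^\ep \leq \ga$ on $Q$. Since the equation \eqref{eq:l-pde} with this fixed coefficient $a^\ep$ is linear, both
\[
\tilde v^\ep := u^\ep - c_0 - \gd \quad\text{and}\quad \tilde w^\ep := -u^\ep + c_0 - \gd
\]
are classical solutions, hence subsolutions, of \eqref{eq:l-pde}. On $\gS_\ga^m \subset B_\eta$ we have $\tilde v^\ep(\cdot,0) = g - c_0 - \gd \leq 0$ and $\tilde w^\ep(\cdot,0) = -(g - c_0) - \gd \leq 0$ by the choice of $\eta$; and the maximum principle (applied to $u^\ep$) gives $|u^\ep| \leq \max(|g_{\min}|,|g_{\max}|)$ throughout $Q$, so both $\tilde v^\ep$ and $\tilde w^\ep$ are bounded by a constant $C_0$ independent of $\ep$.

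Proposition~\ref{thm:short} with $T(\ep) = \infty$ then yields, for all sufficiently small $\ep$,
\[
\tilde v^\ep(0,t) \leq \gd \quad\text{and}\quad \tilde w^\ep(0,t) \leq \gd \quad\text{for all } t \in [0, \exp((m-\gd)/\ep)),
\]
which rearranges to $|u^\ep(0,t) - c_0| \leq 2\gd$ on the same interval. Setting $\gl_0 := m - \gd$ and replacing $\gd$ by $\gd/2$ at the start, I obtain the claimed estimate \eqref{eq:main1}. I do not foresee a serious obstacle: the one point that warrants care is that the ``ceiling matrix'' $\ga = \gth_0^{-1}I$ used here is fixed once and for all (independent of $\ep$, $t$, and the specific value $c_0$), which is what makes the sublevel set $\gS_\ga^m$ a concrete neighborhood of the origin into which we can inject the continuity of $g$ at $0$.
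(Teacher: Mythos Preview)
Your approach is essentially identical to the paper's: choose the ceiling matrix $\ga=\gth_0^{-1}I$, use the positivity and continuity of $V_\ga$ to fit a sublevel set $\gS_\ga^m$ inside a ball where $g$ is close to $g(0)$, and apply Proposition~\ref{thm:short} to $\pm(u^\ep-g(0))-\gd$. The only wrinkle is that you use the same $\gd$ both for the continuity radius of $g$ and for the free parameter in Proposition~\ref{thm:short}, which forces $\gl_0=m-\gd$ and could be nonpositive when $m$ (dictated by $\eta$) is small; the paper avoids this by introducing a separate parameter $\gamma<\min\{\gd/2,\,m\}$ in the application of Proposition~\ref{thm:short}, but your argument is easily repaired the same way.
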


\bpr Let $V\in\Lip(\lbar\gO)$ be the quasi-potential associated with 
$(\gth_0^{-1}I, b)$. We choose $m>0$ small enough 
so that $m<\min_{\pl\gO}V$ and  
\[
\{x\in\gO\mid V(x)\leq m\}\subset\{x\in\gO\mid |g(x)-g(0)|\leq \gd/2\}. 
\]

Applying Proposition~\ref{thm:short}, with $a^\ep(x,t)=a(x,u^\ep(x,t))$ and 
$\ga(x)=\gth_0^{-1} I$ and 
$u^\ep$ replaced by $\pm(u^\ep-g(0))-\gd/2$, we get  that, 
for each $\gamma>0$, there is $\ep_0\in(0,\,1)$ such that 
\[
\pm(u^\ep(0,t)-g(0))-\gd/2\leq \gamma 
\quad\text{ for all }\ t\in[0,\,\exp((m-\gamma)/\ep)] \ \ 
\text{ and } \ \ \ep\in(0,\,\ep_0).  
\] 

We fix $\gamma>0$ small enough so that 
$\gamma< \min\{\gd/2,\,m\}$, and we get   \eqref{eq:main1} with $\gl_0=m-\gamma$.  
\epr

\blem\label{lem:main'} Assume \eqref{B} and \eqref{G} and let $\gl>0$ and, for each $\ep\in(,\,1)$, $u^\ep\in C(\lbar Q)\cap C^{2,1}(Q)$ be a solution of \eqref{eq:ql-pde} and \eqref{eq:ibv-g}. 
If $c_1>c_0$, then 
\beq\label{eq:main'2}
\liminf_{\ep\to 0+}u^\ep(0,\exp(\gl/\ep))\geq \bar c(\gl),  
\eeq
and, if $c_1<c_0$, then 
\beq\label{eq:main'2+1}
\limsup_{\ep\to 0+}u^\ep(0,\exp(\gl/\ep))\leq \bar c(\gl).
\eeq
\elem 

This lemma is exactly the same as \cite{FK2012a}*{Lemma 3.12}.  

\bpr We give only the proof of the first assertion, since the other claim 
can be proved similarly. 

Note that, in view of the definition of $c_1$ and the function $\bar c$, 
$G^-(c)>c$ for all $c\in[c_0,\,c_1)$ and $\gl\not=M(c)$ for all 
$c\in[c_0,\,\bar c(\gl))$. Furthermore, 
since the function $M$ is continuous, we have $\gl>M(c)$ for all 
$c\in[c_0,\,\bar c(\gl))$.

We show first that, for any $\rho>0$, 
\beq\label{eq:main'3}
\liminf_{\ep\to 0+}u^\ep(0,\exp(\rho/\ep))\geq c_0.  
\eeq

According to Lemma \ref{lem:main}, there exists $\gl_0>0$ such that, for any
$\rho\in(0,\,\gl_0]$,  
\beq\label{eq:main'4}
\lim_{\ep\to 0+}u^\ep(0,\exp(\rho/\ep))= c_0,
\eeq 
which shows that  \eqref{eq:main'3} holds  if  $\rho\leq \gl_0$.

Fix any $\rho>\gl_0$ and, to prove \eqref{eq:main'3}, suppose to the contrary that 
\[
\liminf_{\ep\to 0+}u^\ep(0,\exp(\rho/\ep))< c_0.
\]

It is easily seen that there exist sequences 
$\{\ep_k\}$, $\{\mu_k\}$ and $\{\gl_k\}$ of positive numbers 
and two constants $\gb_1,\,\gb_2\in I_g$  
such that 
$\,\lim_{k\to\infty}\ep_k=0$, $\,c_0>\gb_1>\gb_2$,
and, for all $k\in\N$, 
\[
\gl_0<\mu_k<\gl_k\leq \rho, \quad 
u^{\ep_k}(0,\exp(\mu_k/\ep_k))=\gb_1 
\ \ \ \text{ and } \ \ \ u^{\ep_k}(0,\exp(\gl_k/\ep_k))=\gb_2. 
\]

Since $G^-(c_0)>c_0$ and $G^-$ is lower semicontinuous, 
we may assume  reselecting $\gb_1,\,\gb_2$ close enough to $c_0$ so   
that $G^-(\gb_2)>\gb_2$. This contradicts Proposition \ref{thm:B}, 
which proves that \eqref{eq:main'3} holds.    

To show \eqref{eq:main'2}, in view of \eqref{eq:main'3}, 
we may assume that $\bar c(\gl)>c_0$ and suppose that \eqref{eq:main'2} is false,
that is, 
\beq\label{eq:main'5}
\liminf_{\ep\to 0+}u^\ep(0,\exp(\gl/\ep))<\bar c(\gl),
\eeq
which in turn  implies together with \eqref{eq:main'3} that $\bar c(\gl)>c_0$. 

We set 
\[
\hat c(\rho):=\liminf_{\ep\to 0+}u^\ep(0,\exp(\rho/\ep)) \ \ \ \text{ for }
\rho\in (0,\,\gl],
\]
and show, arguing by contradiction,  that 
\beq\label{eq:main'6}
\hat c(\rho_1)\leq\hat c(\rho_2) \ \ \ \text{ if } \ 0<\rho_1<\rho_2\leq\gl. 
\eeq

To  this end, we suppose to the contrary that there exist $0<\rho_1<\rho_2\leq \gl$ 
such that  
\[
\hat c(\rho_1)>\hat c(\rho_2). 
\]

We may assume that $\hat c(\rho_2)\leq \hat c(\gl)$. Indeed, 
if $\hat c(\rho_2)>\hat c(\gl)$, then,  replacing $\rho_2$ by $\gl$,  
we find $\hat c(\rho_1)>\hat c(\rho_2)$ and $\hat c(\rho_2)=\hat c(\gl)$.  
Now, noting  by \eqref{eq:main'5} and \eqref{eq:main'4} that 
$c_0\leq \hat c(\rho_2)\leq \hat c(\gl)<\bar c(\gl)$, we may choose 
sequences   
$\{\ep_k\},\,\{\mu_k\},\,\{\gl_k\}$ of positive numbers and 
constants $\gb_1,\,\gb_2\in I_g$ such that 
$\lim_{k\to\infty}\ep_k=0$, $\bar c(\gl)>\gb_1>\gb_2>c_0$, 
and, for all $k\in\N$, 
\[
u^{\ep_k}(0,\exp(\mu_k/\ep_k))=\gb_1,\ \ 
u^{\ep_k}(0,\exp(\gl_k/\ep_k))=\gb_2 \ \ \text{ and } \ \ 
\rho_1<\mu_k<\gl_k\leq \rho_2.
\]

Here and there, for notational 
simplicity, we use the same symbols 
$\gb_i$, $\ep_k$, $\mu_k$ and $\gl_k$ to denote different quantities 
in different arguments. Moreover, 
since $c_0<\gb_2<\bar c(\gl)$, we have $G^-(\gb_2)>\gb_2$. 
Thus, we are in the situation that contradicts Proposition \ref{thm:B}, 
and we conclude that \eqref{eq:main'6} holds.

The last step of our proof is an application of Proposition \ref{thm:C}
for a contradiction. 

In view of the monotonicity \eqref{eq:main'6}, the function $\hat c$ 
has  at most countably many discontinuities on $(0,\,\gl]$ and,  recalling  that 
$c_0\leq \hat c(\rho)<\bar c(\gl)$ for all $\rho\in(0,\,\gl]$ and that 
$G^-(c)>c$ and $M(c)<\gl$ for all $c\in[c_0,\,\bar c(\gl))$,  
we may choose $\rho_0\in (0,\,\gl)$ so that $\hat c$ is continuous at $\rho_0$ 
and, for $\gb_0:=\hat c(\rho_0)$, 
\beq\label{eq:main'7}
\rho_0>M(\gb_0). 
\eeq

We fix any $\gd>0$. 
Since $G^-(\gb_0)>\gb_0$,  in view of the lower semicontinuity of $G^-$, 
we may choose $\gd_1\in(0,\,\gd/3)$ so that 
\beq\label{eq:main'8}
G^-(c)>\gb_0+3\gd_1 \ \ \ \text{ for all }c\in[\gb_0-3\gd_1,\,\gb_0+3\gd_1].
\eeq
 
Moreover  the continuity of $\hat c$ at $\rho_0$ yields 
$\gamma>0$ such that  
\[
[\rho_0-\gamma,\,\rho_0+\gamma]\subset (0,\,\gl),
\]
and, for all $\rho\in[\rho_0-\gamma,\,\rho_0+\gamma]$,
\beq\label{eq:main'9}
\hat c(\rho)\in[\gb_0-\gd_1,\,\gb_0+\gd_1]. 
\eeq

Now, we show that there exists a sequence $\{\ep_k\}\subset (0,\,1)$ 
such that $\lim_{k \to \infty}\ep_k=0$ and, for all $\rho\in[\rho_0-\gamma,\,\rho_0+\gamma]$ and $k\in\N$,
\beq\label{eq:main'10}
u^{\ep_k}(0,\exp(\rho/\ep_k))\in [\gb_0-3\gd_1,\,\gb_0+3\gd_1].
\eeq

Indeed, in view of the definition of $\hat c$ and \eqref{eq:main'9}, 
we may choose a sequence 
$\{\ep_k\}\subset (0,\,1)$ such that $\lim_{k \to \infty}\ep_k=0$ and, 
for all $k\in\N$,
\beq\label{eq:main'101}
u^{\ep_k}(0,\exp((\rho_0+\gamma)/\ep_k))\in [\gb_0-2\gd_1,\,\gb_0+2\gd_1].
\eeq

Since $\hat c(\rho_0-\gamma)\geq \gb_0-\gd_1$, \eqref{eq:main'9} gives 
\[
\liminf_{k\to\infty}u^{\ep_k}(0,\exp((\rho_0-\gamma)/\ep_k))\geq \gb_0-\gd_1,
\]
and, therefore, by passing to a subsequence if necessary, we may assume that, for all 
$k\in\N$, 
\beq\label{eq:main'102}
u^{\ep_k}(0,\exp((\rho_0-\gamma)/\ep_k))\geq \gb_0-2\gd_1.
\eeq

To complete the proof of \eqref{eq:main'10}, we need only to show
that for infinitely many $k\in\N$ and all $\rho\in[\rho_0-\gamma,\,\rho_0+\gamma]$, 
\beq\label{eq:main'11}
u^{\ep_k}(0,\exp(\rho/\ep_k))\in[\gb_0-3\gd_1,\,\gb_0+3\gd_1]. 
\eeq

If this is not the case, there exist a subsequence of $\{\ep_k\}$, 
which we denote again by the same symbol, and a sequence 
$\{\rho_k\}\subset [\rho_0-\gamma,\,\rho_0+\gamma]$ such that either 
\beq\label{eq:main'case1}
u^{\ep_k}(0,\exp(\rho_k/\ep_k))>\gb_0+3\gd_1 \ \ \ \text{ for all } k\in \N,
\eeq
or 
\beq \label{eq:main'case2}
u^{\ep_k}(0,\exp(\rho_k/\ep_k))<\gb_0-3\gd_1 \ \ \ \text{ for all } k\in \N,
\eeq

In view of  \eqref{eq:main'101}, 
if \eqref{eq:main'case1} holds, 
then there are two sequences $\{\mu_k\},\,\{\gl_k\}\subset (0,\,\gl)$ such that,
for all $k\in\N$, 
\beq\label{eq:main'12}
\bcases
\gb_0+3\gd_1=u^{\ep_k}(0,\exp(\mu_k/\ep_k))>
u^{\ep_k}(0,\exp(\gl_k/\ep_k))=\gb_0+2\gd_1,
\\ 
\rho_0-\gamma\leq \mu_k<\gl_k\leq \rho_0+\gamma.
\ecases\eeq

Similarly, in view of \eqref{eq:main'102}, if \eqref{eq:main'case2} holds, 
then there are two sequences $\{\mu_k\},\,\{\gl_k\}\subset (0,\,\gl)$ such that,
for all $k\in\N$, 
\beq\label{eq:main'13}
\bcases
\gb_0-2\gd_1=u^{\ep_k}(0,\exp(\mu_k/\ep_k))>
u^{\ep_k}(0,\exp(\gl_k/\ep_k))=\gb_0-3\gd_1,
\\
\rho_0-\gamma\leq \mu_k<\gl_k<\rho_0+\gamma.
\ecases
\eeq

If \eqref{eq:main'12} holds, 
setting $\gb_1:=\gb_0+3\gd_1$ and $\gb_2:=\gb_0+2\gd_1$ 
and noting by \eqref{eq:main'8} that $G^-(\gb_2)>\gb_2$, we apply Proposition 
\ref{thm:B}, to obtain a contradiction. 

In the case \eqref{eq:main'13} holds, 
setting $\gb_1:=\gb_0-2\gd_1$ and $\gb_2:=\gb_0-3\gd_1$ 
and noting that $G^-(\gb_2)>\gb_2$, we get a contradiction by 
Proposition \ref{thm:B}. 

Now, we find that \eqref{eq:main'11} holds 
and, therefore, there is a sequence $\{\ep_k\}\subset (0,\,1)$ 
for which \eqref{eq:main'10} holds. 

Thus, under the supposition \eqref{eq:main'5}, we have shown that 
\eqref{eq:main'7} and \eqref{eq:main'9} hold 
for some sequence $\{\ep_k\}\subset (0,\,1)$ converging to zero. 
Proposition \ref{thm:C} assures that $\rho_0\leq M(\gb_0)$, which contradicts 
\eqref{eq:main'7}. Therefore, we conclude that \eqref{eq:main'2} must hold. 
\epr

\bpr[Proof of Theorem~\ref{thm:FK}] In view of Theorem~\ref{thm:conclusion3}, 
we only need to show that
\beq\lab{eq:main2}
\lim_{\ep\to 0}u^\ep(0,\exp(\gl/\ep))=\bar c(\gl).
\eeq

\smallskip

 The  comparison principle yields that  
\[g_{\min}\leq u^\ep
\leq g_{\max}  \ \text {on } \ \lbar Q.
\]

 We fix $\gl>0$ and consider first the case $\gl<M(c_0)$, which implies  
that $\bar c(\gl)=c_0$, and  prove 
that 
\beq\lab{eq:main3}
\limsup_{\ep\to 0}u^\ep(0,\exp(\gl/\ep))\leq \bar c(\gl)=c_0.
\eeq 

We argue by contradiction and  suppose that 
\[
\limsup_{\ep\to 0}u^\ep(0,\exp(\gl/\ep))> c_0.
\]

 Using 
the continuity of the function $M$, we choose $\gb_1,\gb_2\in\R$ so that 
\beq\lab{eq:main4}
c_0<\gb_1<\gb_2<\limsup_{\ep\to 0}u^\ep(0,\exp(\gl/\ep)) 
\ \ \text{ and } \ \ M(\gb_2)>\gl,
\eeq
and note that, in view of Lemma~\ref{lem:main}, there are constants $\gl_0\in(0,\,\gl)$ 
and $\ep_0\in(0,\,1)$ such that 
\beq\lab{eq:main5}
u^\ep(0,\exp(\gl_0/\ep))\leq \gb_1\quad\text{ for all }\ep\in(0,\,\ep_0). 
\eeq

 On the other hand,  \eqref{eq:main4} yields a sequence $\{\ep_k\}_{k\in\N}\subset(0,\,\ep_0)$ such that $\ep_k\to 0$ and  
\[
u^{\ep_k}(0,\exp(\gl/\ep_k))\geq \gb_2\quad\text{ for all }\ k\in\N,
\]
while,  \eqref{eq:main5} gives
\[
u^{\ep_k}(0,\exp(\gl_0/\ep_k))\leq \gb_1\quad\text{ for all }k\in\N.
\]

 The continuity of $t \mapsto u^{\ep_k}(0,t)$ implies that,  for each $k\in\N$, 
there exist $\mu_k, \gl_k\in[\gl_0,\, 
\gl]$ such that $\gl_0\leq \mu_k<\gl_k\leq\gl$ and 
\[
u^{\ep_k}(0,\exp(\mu_k/\ep_k))=\gb_1 \ \ \text{ and } \ \ u^{\ep_k}(0,\exp(\gl_k/\ep_k))=\gb_2. 
\]

Proposition  \ref{thm:A} now assures that $\limsup_{k\to\infty}\gl_k\geq M(\gb_2)$, 
but this contradicts that $\gl_k\leq \gl<M(\gb_2)$ for all $k\in\N$.  
\smallskip

 A  similar  argument shows that 
\[
\liminf_{\ep \to 0}u^\ep(0,\exp(\gl/\ep))\geq \bar c(\gl),
\]
and,  thus,  we have \eqref{eq:main2} in the case where $\gl<M(c_0)$. 
\smallskip

 Next we consider the case where $\gl\geq M(c_0)$ and $c_1=c_0$ and recall that, by definition, 
$\bar c(\gl)=c_0$. We first suppose that
\[
\limsup_{\ep\to 0}u^\ep(0,\exp(\gl/\ep))>c_0, 
\]
and use \eqref{FK12} and the upper semicontinuity of $G^+$, to 
select $\gb_2\in\R$ 
so that  $c_0<\gb_2<\limsup_{\ep\to 0}u^\ep(0,\exp(\gl/\ep))$  
 and $G^+(\gb_2)<\gb_2$.
 
Choosing,  for instance, $\gb_1=(c_0+\gb_2)/2$, so that 
$c_0<\gb_1<\gb_2$, and, using Lemma~\ref{lem:main} 
as in the previous case, we may choose sequences 
$\{\ep_k\}$,  
$\{\mu_k\}$, $\{\gl_k\}$ such that  $\lim_{k \to \infty} \ep_k=0$ 
and for some $\gl_0>0$ and all $k\in\N$,  
\[\gl_0\leq \mu_k<\gl_k\leq\gl,\quad 
u^{\ep_k}(0,\exp(\mu_k/\ep_k))=\gb_1\quad\text{ and }\quad
u^{\ep_k}(0,\exp(\gl_k/\ep_k))=\gb_2.
\]
This contradicts  
Proposition ~\ref{thm:B}, and thus, we conclude that 
\[
\limsup_{\ep\to 0}u^\ep(0,\exp(\gl/\ep))\leq c_0.
\]

A similar argument shows 
\[
\liminf_{\ep\to 0}u^\ep(0,\exp(\gl/\ep))\geq c_0,
\]
and, hence, we have \eqref{eq:main2} when $\gl\geq M(c_0)$ and $c_1=c_0$. 
\smallskip

 Now we consider the case where $\gl\geq M(c_0)$ and $c_1>c_0$. 
The definition of $c_1$ implies that $G^-(c)>c\,$ for all $\,c\in[c_0,\,c_1)$, and, 
moreover, by the definition of $\bar c$, we have   
$\,\bar c(\gl)\in[c_0,\,c_1]$, $\gl>M(c)$ for all 
$c\in[c_0,\,\bar c(\gl))$, and, if $\bar c(\gl)<c_1$, then 
$M(\bar c(\gl))=\gl$. 

Suppose that 
\[
\limsup_{\ep\to 0}u^\ep(0,\exp(\gl/\ep))>\bar c(\gl).
\] 

We assume first that
$\bar c(\gl)=c_1$ and observe that we must  have $c_1<g_{\max}$.  
Then \eqref{FK12}   
yields  $\gb_2\in\R$ so that $G^+(\gb_2)<\gb_2$ 
and 
$c_1<\gb_2<\limsup_{\ep\to 0}u^\ep(0,\exp(\gl/\ep))$. 
Fixing $\gb_1\in(c_1,\,\gb_2)$, we argue, as in the previous case, with $c_1$ in place of $c_0$  
and find sequences $\ep_k \to 0+$, 
$\{\mu_k\}$ and $\{\gl_k\}$, and constants $\gl_0>0$ and $\gd>0$ 
such that for all $k\in\N$,  
\[
\gl_0\leq \mu_k<\gl_k\leq\gl,\quad  
u^{\ep_k}(0,\exp(\mu_k/\ep_k)=\gb_1,\quad
u^{\ep_k}(0,\exp(\gl_k/\ep_k))=\gb_2,
\]
which  contradicts Proposition~\ref{thm:B}. 
\smallskip

 Assume next that $\bar c(\gl)<c_1$. 
As noted above, we have $M(\bar c(\gl))=\gl$ and $M(c)<\gl$ for all $c\in[c_0,\,\bar c(\gl))$, 
and, in particular, 
\beq\lab{eq:main6}
M(c)\leq \gl \ \ \text{ for all }\ c\in[c_0,\,\bar c(\gl)].  
\eeq 

Since the function $\bar c$ is continuous at $\gl$, we may choose $\eta>0$ so that 
$\bar c(r)<c_1$ for all $r\in[\gl,\,\gl+\eta]$ and noting that,  for any $r\in(\gl,\,\gl+\eta]$, 
$r> M(c_0)$, we find by the definition of $\bar c(r)$ that 
$M(\bar c(r))=r$, which together with \eqref{eq:main6} implies that $\bar c(r)> \bar c(\gl)$. 

We choose $\gamma\in(0,\,\eta)$ small enough 
so that $\bar c(\gl+\gamma)<\limsup_{\ep\to 0}u^\ep(0,\exp(\gl/\ep))$. 
If we set $\gb_2=\bar c(\gl+\gamma)$ and fix $\gb_1\in(\bar c(\gl),\gb_2)$, then we have 
$\bar c(\gl)<\gb_1<\gb_2<\limsup_{\ep\to 0}u^\ep(0,\exp(\gl/\ep))$. 

 As before, we choose sequences $\{\ep_k\}$, 
$\{\mu_k\}$ and $\{\gl_k\}$ such that $\lim_{k \to \infty} \ep_k=0$ and, for some $\gl_0>0$ and for all $k\in\N$,
\[\gl_0\leq \mu_k<\gl_k\leq\gl,\quad
u^{\ep_k}(0,\exp(\mu_k/\ep_k))=\gb_1 \ \text{and} \ 
u^{\ep_k}(0,\exp(\gl_k/\ep_k))=\gb_2.
\]
Then Proposition \ref{thm:A} imply that $M(\gb_2)\leq 
\limsup_{k\to\infty}\gl_k\leq \gl$. 
On the other hand, we have $\,M(\gb_2)=M(\bar c(\gl+\gamma))=\gl+\gamma>\gl$. 
Hence we obtain a contradiction, 

Thus, in the case when $\gl\geq M(c_0)$ 
and $c_1>c_0$, we have 
\[
\limsup_{\ep\to 0}u^\ep(0,\exp(\gl/\ep))\leq \bar c(\gl),
\] 
while,
by Lemma \ref{lem:main'}, we find 
\[
\liminf_{\ep\to 0}u^\ep(0,\exp(\gl/\ep))\geq \bar c(\gl), 
\]  
and we  conclude that \eqref{eq:main2} holds when $\gl\geq M(c_0)$ and $c_1>c_0$. 
\smallskip

 A similar  argument proves that \eqref{eq:main2} holds when
$\gl\geq M(c_0)$ and $c_1<c_0$,  
and the proof is complete. 
\epr

\appendix

\def\thesection{\Alph{section}}

\section{A subsolution property}
 
 For $T>0$ and 
a (relatively) open subset $\Pi$
of $\pl\gO$, we consider the problem
\beq\lab{eq:A-transport}
\bcases
U_t\leq b(x)\cdot DU \ \  \ \text{ in  }  \ \  \gO\tim(0,\,T],\wcr
\min\{U_t -b(x)\cdot DU,\, U\} \leq 0\ \ \text{ on } \ \  \Pi\tim(0,\, T].
\ecases 
\eeq  

\begin{lemma} \lab{lem:A-transport}
Let $U\in\USC(\lbar Q_T)$ be a subsolution of \eqref{eq:A-transport}, fix $z\in\gO^\Pi$ and set 
\[
u(t)=U(X(T-t,z),t) \quad\text{ for }t\in[0,\,T]. 
\]
Then $u\in\USC([0,\,T])$ and, if $z\in\gO$, it is a subsolution of 
\beq\lab{eq:A-visco1}
u'\leq 0 \quad\text{ in }\ (0,\,T]
\eeq
and, if   $z\in\Pi$, it  is a subsolution of 
\beq\lab{eq:A-visco2}\bcases
u'\leq 0 \  \text{ in } \ (0,\,T), \\[1mm]
\min \{u', u\}\leq 0 \ \text{ on  } \ \{T\}. 
\ecases
\eeq
\end{lemma}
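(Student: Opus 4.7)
The plan is to reduce the transport-type viscosity inequalities satisfied by $U$ to one-dimensional inequalities along the backward characteristic $Y(t) := X(T - t, z)$, which solves $Y'(t) = -b(Y(t))$ with $Y(T) = z$. The upper semicontinuity of $u$ is immediate, since $t \mapsto (Y(t), t)$ is continuous and the composition of a USC function with a continuous map is USC. Note also that by \eqref{b-inward}, if $z \in \gO$ then $Y(t) \in \gO$ for every $t \in [0, T]$, while if $z \in \Pi$ then $Y(t) \in \gO$ for $t \in [0, T)$ and $Y(T) = z \in \Pi$.

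For the subsolution property, let $\phi$ be smooth on a neighborhood of a point $t_0$ at which $u - \phi$ attains a local maximum; adding $\ep(t - t_0)^2$ to $\phi$ for small $\ep > 0$, I may assume the maximum is strict. The key device is the space-time penalty
\[
\Psi_M(x, t) := \phi(t) + M\,|x - Y(t)|^2, \qquad M > 0,
\]
which I treat as a test function for $U$ after localizing to a compact neighborhood $K$ of $(Y(t_0), t_0)$ in $\lbar\gO \times [0, T]$. The USC of $U$ guarantees that $U - \Psi_M$ attains its maximum over $K$ at some $(x_M, t_M)$. Comparison with the value $u(t_0) - \phi(t_0)$ at $(Y(t_0), t_0)$ yields
\[
U(x_M, t_M) - \phi(t_M) - M\,|x_M - Y(t_M)|^2 \geq u(t_0) - \phi(t_0),
\]
and boundedness of $U$ on $K$ forces $|x_M - Y(t_M)| \to 0$. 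Combined with the strict maximality of $t_0$ and the USC of $U$, this yields $(x_M, t_M) \to (Y(t_0), t_0)$, and a second application of the same comparison yields $\liminf_{M \to \infty} M\,|x_M - Y(t_M)|^2 = 0$.

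When $z \in \gO$, the convergence $x_M \to Y(t_0) \in \gO$ places $x_M$ in $\gO$ for large $M$, so the interior subsolution inequality applies at $(x_M, t_M)$. A direct calculation using $Y'(t_M) = -b(Y(t_M))$ reduces it to
\[
\phi'(t_M) \leq 2M\,(x_M - Y(t_M)) \cdot \bigl(b(x_M) - b(Y(t_M))\bigr) \leq 2L_b \cdot M\,|x_M - Y(t_M)|^2,
\]
where $L_b$ is the Lipschitz constant of $b$; taking $\liminf$ as $M \to \infty$ yields $\phi'(t_0) \leq 0$, which is \eqref{eq:A-visco1}.

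When $z \in \Pi$, the argument above handles $t_0 \in (0, T)$ since $Y(t_0) \in \gO$. At $t_0 = T$ the limit point is $(z, T) \in \Pi \times \{T\}$, so $x_M$ may lie on $\Pi$. In that event the boundary viscosity condition gives the dichotomy that either the interior inequality $\partial_t \Psi_M - b(x_M) \cdot D_x \Psi_M \leq 0$ holds at $(x_M, t_M)$, yielding $\phi'(T) \leq 0$ as above, or $U(x_M, t_M) \leq 0$. In the latter case the maximum inequality gives $U(x_M, t_M) \geq u(T) - \phi(T) + \phi(t_M)$, and passing to the limit forces $u(T) \leq 0$. Thus $\min\{\phi'(T), u(T)\} \leq 0$ in all cases, establishing \eqref{eq:A-visco2}. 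The main delicate step is the standard penalization estimate $\liminf_M M\,|x_M - Y(t_M)|^2 = 0$; this is precisely the cancellation that kills the Lipschitz error term in the interior computation, and without it the argument would collapse.
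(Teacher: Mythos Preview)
Your proof is correct and follows essentially the same approach as the paper's: penalize with $M|x-X(T-t,z)|^2$, use the viscosity inequality for $U$ at the penalized maximum, cancel the characteristic derivative against the drift, and absorb the remainder via the Lipschitz bound on $b$; the boundary case at $t_0=T$ is handled through the same dichotomy coming from the relaxed Dirichlet condition. The only cosmetic differences are your localization to a compact neighborhood $K$ (the paper maximizes over all of $\lbar Q_T$) and your statement of the penalization estimate as a $\liminf$ (in fact the full limit vanishes, but the $\liminf$ already suffices for your conclusion).
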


 We note that observations like the lemma above 
concerning 
the restriction of viscosity solutions to lower dimensional manifolds 
go back to Crandall and Lions \cite{CL1983}*{Proposition~I.13}. 

\def\x{\hat x} 
\def\t{\hat t} 

\bpr Let $\phi\in C^1((0,\,T])$ 
and assume that $u-\phi$ has a strict maximum at $\t\in(0,\,T]$.  
\smallskip

 For  $\ga>0$ consider the function 
$\Phi: \lbar Q_T \to \R$ given by 
\[
\Phi(x,t):=U(x,t)-\phi(t)-\ga|x-X(T-t,z)|^2, \ \ 
\]  
let $(x_\ga,t_\ga)\in\lbar Q_T$ be a
maximum point of $\Phi$, set $\x=X(T-\t,z)$, 
and observe that,  as $\ga\to \infty$, 
$(x_\ga,t_\ga)\to (\x,\t)$,  
$\ga|x_\ga-X(T-t_\ga,z)|^2\to 0$ and $U(x_\ga,t_\ga)\to U(\x,\t)$.

 Then, for  $\ga$ sufficiently large, we may assume that 
$(x_\ga,t_\ga)\in\gO\tim (0,\, T]$ 
if either $z\in\gO$ or $\t<T$, 
and  $(x_\ga,t_\ga)\in\gO^\Pi\tim(0,\,T]$ 
if $z\in\Pi$. 
\smallskip

 If $(x_\ga,t_\ga)\in \varOmega\tim(0,\,T]$,  
\eqref{eq:A-transport} yields 
\[
\phi'(t_\ga)-2\ga(X(T-t_\ga,z)-x_\ga)\cdot \dot X(T-t_\ga,z)
\leq 2\ga b(x_\ga)\cdot(x_\ga-X(T-t_\ga,z)),
\]
and then 
\[\begin{aligned}
\phi'(t_\ga)&\,\leq 2\ga (x_\ga-X(T-t_\ga,z))\cdot (b(x_\ga)-b(X(T-t_\ga,z)))
\\&\,\leq 2\|Db\|_{L^\infty(\gO)}\ga|x_\ga-X(T-t_\ga,z))|^2. 
\end{aligned}
\]
\smallskip

Similarly, if  $(x_\ga,t_\ga)\in\Pi\tim(0,\,T]$, then we get 
\[
\phi'(t_\ga)\leq 2\|Db\|_{L^\infty(\gO)}\ga|x_\ga-X(T-t_\ga,z))|^2 
\quad\text{ or }
\quad U(x_\ga,t_\ga)\leq 0.
\]

Sending $\ga\to\infty$ yields 
\[
\phi'(\t)\leq 0 \quad 
\text{ if either }\ z\in\gO \ \text{ or }\ \t<T,
\]
and 
\[
\phi'(\t)\leq 0 \quad\text{ or }\quad 
u(\t)\leq 0 \quad  
\text{ if }\ z\in\Pi\ \text{ and } \ \t=T.  
\]
\epr

\section{The supersolution property up to the boundary}

 For $\ga\in C(\lbar\gO,\,\bbS^n(\gth_0))$ and $H(x,p)=\ga(x)p\cdot p+b(x)\cdot p$ 
 we consider the 
 equation 
\beq\lab{eq:A-super0}
H(x,Du)=0\quad\text{ in }\ \gO.
\eeq

\begin{lemma}\lab{lem:A-super} The maximal subsolution $V\in\Lip(\lbar\gO)$ of \eqref{eq:A-super0} with $V(0)=0$ satisfies, in the viscosity sense, 
\[
H(x,DV)\geq 0\quad\text{ on }\ \lbar\gO.
\]
\end{lemma}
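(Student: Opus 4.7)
\textbf{Plan for the proof of Lemma~\ref{lem:A-super}.} The strategy is a standard bump construction exploiting the maximality of $V$, with a clean dichotomy based on whether the test point equals the distinguished point $0$.

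First I would dispose of the trivial case $x_0=0$: since $b(0)=0$ and $\ga(0)\in\bbS^n(\gth_0)$, for every $p\in\R^n$ we have
\[
H(0,p)=\ga(0)p\cdot p+b(0)\cdot p=\ga(0)p\cdot p\ge\gth_0|p|^2\ge 0,
\]
so the viscosity supersolution inequality at $x=0$ holds automatically against any test function. This reduces matters to points $x_0\in\lbar\gO\setminus\{0\}$.

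Fix such $x_0$ and let $\phi\in C^1$ be such that $V-\phi$ attains a local minimum at $x_0$ relative to $\lbar\gO$, normalized so that $V(x_0)=\phi(x_0)$. Suppose for contradiction that $H(x_0,D\phi(x_0))<0$. My plan is to produce a subsolution of \eqref{eq:A-super0} that equals $V$ at $0$ but strictly exceeds $V$ at $x_0$, contradicting the maximality of $V$. To make the minimum strict and gain a margin, set $\tilde\phi(x):=\phi(x)-\gk|x-x_0|^2$ for a small $\gk>0$. Then $D\tilde\phi(x_0)=D\phi(x_0)$, so by continuity of $H$ and of $D\tilde\phi$ one can choose $r>0$ small enough so that
\[
B_r(x_0)\cap\lbar\gO\subset\lbar\gO\setminus\{0\}
\quad\text{and}\quad
H(x,D\tilde\phi(x))<0 \ \text{for all } x\in B_r(x_0)\cap\lbar\gO.
\]
Moreover, the choice of $\tilde\phi$ yields $V(x)-\tilde\phi(x)\ge \gk|x-x_0|^2$ in $B_r(x_0)\cap\lbar\gO$ (possibly after shrinking $r$), and, in particular, $V-\tilde\phi\ge \gk r^2$ on $\pl B_r(x_0)\cap\lbar\gO$.

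Now pick $\gd\in(0,\gk r^2)$ and define
\[
W(x):=
\begin{cases}
\max\{V(x),\,\tilde\phi(x)+\gd\} & \text{if }x\in B_r(x_0)\cap\lbar\gO,\\
V(x) & \text{if }x\in\lbar\gO\setminus B_r(x_0).
\end{cases}
\]
On $\pl B_r(x_0)\cap\lbar\gO$ one has $\tilde\phi+\gd<V$, so $W=V$ there and the two definitions agree; hence $W\in\Lip(\lbar\gO)$. Since $0\notin B_r(x_0)$, also $W(0)=V(0)=0$. The key verification is that $W$ is a (viscosity) subsolution of $H(x,Du)=0$ in $\gO$: outside $B_r(x_0)$ it coincides with $V$; inside $B_r(x_0)\cap\gO$ it is the pointwise maximum of $V$ (a subsolution) and the smooth classical subsolution $\tilde\phi+\gd$, and the maximum of viscosity subsolutions of a continuous equation is again a subsolution. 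Finally,
\[
W(x_0)\ge \tilde\phi(x_0)+\gd=\phi(x_0)+\gd=V(x_0)+\gd>V(x_0),
\]
contradicting the maximality of $V$ among subsolutions with $u(0)=0$. This completes the plan.

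\textbf{Main obstacle.} The delicate point is the bookkeeping at the distinguished point $0$, where the normalization $V(0)=0$ prevents a naive upward perturbation. Splitting off the case $x_0=0$ via the degeneracy $b(0)=0$ (so that $H(0,\cdot)\ge 0$ holds trivially) and then localizing the bump inside a ball that avoids $0$ circumvents this issue, but it must be done carefully to ensure that $W$ still attains the value $0$ at the origin and remains continuous across $\pl B_r(x_0)\cap\lbar\gO$, which is exactly what the $\gk|x-x_0|^2$ penalty and the choice $\gd<\gk r^2$ guarantee.
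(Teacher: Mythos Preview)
Your argument is correct and follows essentially the same bump-and-maximality strategy as the paper: both dispose of the case $x_0=0$ via $H(0,p)=\ga(0)p\cdot p\ge 0$, then build a competitor $W=\max\{V,\phi+\text{const}\}$ localized in a ball avoiding the origin to contradict maximality. The only cosmetic difference is that the paper assumes from the outset that $V-\phi$ has a \emph{strict} minimum at $\hat x$ (a standard reduction), whereas you insert the quadratic penalty $-\gk|x-x_0|^2$ by hand to manufacture strictness; the two amount to the same thing.
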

 
 Note that the importance of the lemma above is that the viscosity inequality holds 
up to the boundary. 

\bpr Let $\phi\in C^1(\lbar\gO)$ and assume that 
$V-\phi$ has a strict minimum at $\x \in \lbar\gO$ and $V(\x)=\phi(\x)$.  
\smallskip

 To prove the assertion of the lemma, we argue by contradiction and suppose that   
$H(\x,D\phi(\x))<0$. 
\smallskip

 Indeed, if $\x=0$, then 
\[
H(\x,D\phi(\x))=\ga(0)D\phi(0)\cdot D\phi(0)\geq 0,
\]
and, henceforth, we may assume that $\x\not=0$. 
\smallskip

 We may choose constants $r>0$ and $\ep>0$ so that $0\not\in B_r(\x)$ and 
 \begin{align}\lab{eq:A-super1}
&H(x,D\phi(x))\leq 0 \quad \text{ for all }\ x\in\lbar\gO\cap B_r(\x),\\
&\ep +\phi(x)<V(x)\quad \text{ for all }\ x\in\lbar \gO\setminus B_r(\x).
\lab{eq:A-super2}
\end{align}

It follows from \eqref{eq:A-super1} that, in the viscosity sense, 
\[
H(x,D\phi)\leq 0 \quad\text{ in }\ \gO\cap B_r(\x).
\]

Set 
\[
W(x)=\max\{V(x),\,\ep+\phi(x)\}\quad\text{ for }\ x\in\lbar\gO,
\]
and observe that
$\,\gO=N\cup M$, 
where $\,N:=\gO\cap B_r(\x)$ and $\,M:=\{x\in\gO\mid V(x)>\ep +\phi(x)\}$ 
(note that $N,\,M$ are both open subsets of $\gO$), 
\[
H(x,DW)\leq 0 \quad\text{ in }\ N \ \ \text{  in the viscosity sense},
\] 
$W=V$ in $M$ and $\,\x\in M$. 
Hence, $W$ is a subsolution of \eqref{eq:A-super0}  
such that $W(0)=V(0)=0$ and 
$W(\x)>V(\x)$, which  contradicts the maximality of $V$.  
\epr

\section{A comparison theorem}  We follow the arguments of  
\cite{Is1989}*{Corollary 2.2 \& Remark 2.4} to give
a proof of following lemma.

\begin{lemma} \lab{lem:A-comparison} Let $\ga\in C(\R^n,\bbS^n(\gth_0))$ and 
$H(x,p)=\ga(x)p\cdot p+b(x)\cdot p.$ 
If 
$v\in\Lip(\lbar\gO)$ and $w\in\LSC(\lbar\gO)$ are respectively a subsolution 
and a supersolution of 
the state-constraints problem
\[
H(x,Du)=0\quad\text{ in }\ \gO,
\]
that is, $v$ and $w$ satisfy, respectively,
\[
H(x,Dv)\leq 0 \quad\text{ in }\gO \quad\text{ 
and }\quad H(x,Dw)\geq 0 \quad\text{ on }\ \lbar\gO,\]
and  $v(0)\leq w(0)$, then  \ $u\leq v$ on $\lbar\gO$. 
\end{lemma}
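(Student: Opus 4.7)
I would prove the stated comparison (reading the conclusion as $v\le w$ on $\lbar\gO$, which is evidently intended) by contradiction, assuming that $M:=\max_{\lbar\gO}(v-w)>0$. Since $v(0)\le w(0)$, this maximum is attained at some $\hat z\in\lbar\gO\setminus\{0\}$. The strategy follows the state-constrained viscosity framework of Soner and Ishii: double the variables, use the convexity of $H(x,\cdot)$ together with the identity $H(x,0)=0$ to obtain a perturbed subsolution, and handle the missing subsolution inequality on $\pl\gO$ through a secondary perturbation that exploits the inward-pointing drift condition $b\cdot\nu<0$ on $\pl\gO$ from \eqref{b-inward}.

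The first step sets, for $\lambda\in(0,1)$, $v^\lambda:=\lambda v+(1-\lambda)v(0)$. By convexity of $H(x,\cdot)$ and $H(x,0)=0$, in the viscosity sense
\[
H(x,Dv^\lambda)=H(x,\lambda Dv)\le\lambda H(x,Dv)\le 0\quad\text{in }\gO,
\]
so $v^\lambda$ is still a subsolution, with $v^\lambda(0)=v(0)\le w(0)$ and $v^\lambda\to v$ uniformly. Fixing $\lambda$ close to $1$ so that $M_\lambda:=\max(v^\lambda-w)\ge M/2>0$, I would then form
\[
\Phi_\alpha(x,y):=v^\lambda(x)-w(y)-\tfrac{\alpha}{2}|x-y|^2,\quad (x,y)\in\lbar\gO\tim\lbar\gO,
\]
and let $(x_\alpha,y_\alpha)$ be a maximizer. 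Lipschitz regularity of $v$ and the standard penalization lemma furnish $|p_\alpha|:=|\alpha(x_\alpha-y_\alpha)|\le\lambda\|v\|_{\Lip}$, $\alpha|x_\alpha-y_\alpha|^2\to 0$, and, along a subsequence, $(x_\alpha,y_\alpha)\to(\hat z_\lambda,\hat z_\lambda)$ with $v^\lambda(\hat z_\lambda)-w(\hat z_\lambda)=M_\lambda$ and $\hat z_\lambda\ne 0$. The supersolution property of $w$ on $\lbar\gO$ provides $H(y_\alpha,p_\alpha)\ge 0$ for every $\alpha$, irrespective of whether $y_\alpha\in\gO$ or $y_\alpha\in\pl\gO$.

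When the $x$-maximizer lies in $\gO$ for large $\alpha$, the subsolution inequality gives $H(x_\alpha,p_\alpha)\le 0$, and subtracting yields
\[
0\le\big[\ga(y_\alpha)-\ga(x_\alpha)\big]p_\alpha\cdot p_\alpha+\big[b(y_\alpha)-b(x_\alpha)\big]\cdot p_\alpha\to 0.
\]
This, by itself, is not a contradiction; the extra ingredient is the strict form of the convexity estimate, $H(x,Dv^\lambda)\le\lambda H(x,Dv)-\lambda(1-\lambda)\ga(x)Dv\cdot Dv$, combined with coercivity of $H$, which via a Lipschitz-in-$x$ comparison between $v^\lambda$ and $w$ near $\hat z_\lambda\ne 0$ keeps $p_\alpha$ bounded away from zero along a subsequence. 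These two features produce a quantitative negative gap in the displayed inequality, yielding the contradiction upon sending $\alpha\to\infty$ and then $\lambda\uparrow 1$.

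The main obstacle, as is characteristic for state-constrained problems, will be the case where $x_\alpha\in\pl\gO$ along a subsequence, since there the subsolution of $v^\lambda$ is not posited. To bypass this I would modify $\Phi_\alpha$ by subtracting a boundary-repellent term $\eta\rho(x)$, with $\rho\in C^1(\R^n)$ a defining function of $\gO$ (so $\rho<0$ in $\gO$, $\rho=0$ and $D\rho/|D\rho|=\nu$ on $\pl\gO$) and $\eta>0$ small; the contribution $-\eta\rho(x)$ is strictly positive in $\gO$ and vanishes on $\pl\gO$, hence forces the $x$-maximizer $\tilde x_\alpha$ of the modified functional into $\gO$. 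The subsolution then reads $H(\tilde x_\alpha,\tilde p_\alpha+\eta D\rho(\tilde x_\alpha))\le 0$, and combining with $H(\tilde y_\alpha,\tilde p_\alpha)\ge 0$ and expanding the quadratic $H$, the limit $\alpha\to\infty$ produces, among lower-order terms, the linear contribution $\eta\,b(\hat z_\lambda)\cdot D\rho(\hat z_\lambda)=\eta|D\rho(\hat z_\lambda)|\,b(\hat z_\lambda)\cdot\nu(\hat z_\lambda)$ whenever $\hat z_\lambda\in\pl\gO$; this is strictly negative by \eqref{b-inward} and provides the sought contradiction after taking $\eta\to 0^+$ and $\lambda\uparrow 1$ in that order.
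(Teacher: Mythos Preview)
Your approach has a genuine gap in the interior case. The strictness you extract from convexity, $H(x_\alpha,p_\alpha)\le -\tfrac{1-\lambda}{\lambda}\gth_0|p_\alpha|^2$ for $p_\alpha\in D^+v^\lambda(x_\alpha)$, degenerates as $|p_\alpha|\to 0$, and it \emph{does} go to zero. Combining with $H(y_\alpha,p_\alpha)\ge 0$ and the continuity of the coefficients yields, for large $\alpha$, an inequality of the form $\tfrac{1-\lambda}{\lambda}\gth_0|p_\alpha|\le L_b|x_\alpha-y_\alpha|+o(1)|p_\alpha|$; since $|p_\alpha|=\alpha|x_\alpha-y_\alpha|$, this forces $x_\alpha=y_\alpha$ once $\alpha$ is large, hence $p_\alpha=0$, and both viscosity inequalities collapse to the tautology $H(\cdot,0)=0$. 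Your claim that $p_\alpha$ stays bounded away from zero is exactly what fails: nothing prevents the collapse $x_\alpha=y_\alpha$, because $H(x,0)=0$ identically. This degeneracy is the heart of the lemma and is why the condition $v(0)\le w(0)$ must be used structurally, not merely to rule out $\hat z=0$.

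The paper resolves this by excising a small ball $\lbar B_r$ around the origin and working on $\gO(r)=\gO\setminus\lbar B_r$. On $\pl B_r$, continuity and $v(0)\le w(0)$ supply the boundary inequality $v\le w+\ep$; on $\gO(r)$, since $b\ne 0$, \cite{IS2015}*{Corollary~4} provides a \emph{genuinely} strict subsolution $\psi$ with $H(x,D\psi)\le -\eta$ for a fixed $\eta>0$ independent of the gradient. The convex combination $(1-\ep)(v-\ep)+\ep\psi$ is then uniformly strict, and comparison follows from \cite{Is1989}*{Corollary~2.2, Remark~2.4}, which also handles $\pl\gO$ via the interior cone condition. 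Your boundary device is separately flawed: the positivity of $-\eta\rho$ in $\gO$ does not force the $x$-maximizer off $\pl\gO$, and even if it did, the limiting inequality after $\alpha\to\infty$ retains a term $2\ga(\hat z_\lambda)\bar p\cdot D\rho(\hat z_\lambda)$ of uncontrolled sign that dominates the $\eta\,b\cdot\nu$ contribution you want to exploit.
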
 

 Note that the viscosity property of $v$ and $w$ at the origin is indeed not 
required in the lemma above. That is, it is enough to assume that $v$ and $w$ 
are a subsolution of 
\[
H(x,Dv)\leq 0 \quad\text{ in }\ \gO\setminus\{0\},
\]
and a supersolution of 
\[
H(x,Dw)\geq 0 \quad\text{ on }\ \lbar\gO\setminus\{0\}.
\]

\bpr Fix  $\ep>0$ and choose $r\in(0,\,r_0)$ sufficiently small so that
\[
\max_{\pl B_r}v\leq \min_{\pl B_r} w+\ep,
\]
set $\gO(r):=\gO\setminus\lbar B_r$, define $h\in C(\pl \gO(r))$ 
and $v_\ep\in\Lip(\lbar\gO)$ 
by 
\[
v_\ep=v-\ep \ \ \text{and} \ \ h(x)=\bcases
\min_{\pl B_r} w&\text{ if }x\in\pl B_r,\\[1mm] 
\max_{\pl\gO} v&\text{ if }x\in\pl\gO,
\ecases
\]
and observe that $v_\ep$ and $w$ are, respectively, a subsolution and a supersolution 
of the Dirichlet problem in the viscosity sense (see \cite{Is1989}):
\[
\bcases
H(x,Du)=0 \quad\text{ in }
\ \gO(r),&\\ 
u=h \quad\text{ or }\quad H(x,Du)=0 \quad\text{ on }\ \pl\gO(r).&
\ecases
\]  

 It follows from  \cite{IS2015}*{Corollary 4} that there exists $\psi\in
\Lip(\lbar\gO(r))$ which is a subsolution of $H(x,D\psi)\leq -\eta$ 
in $\gO(r)$ for some $\eta>0$ and note that we may assume by adding, if necessary,  
a constant that $\psi\leq v_\ep$ on $\gO(r)$. 
 
  Define $v^\ep\in\Lip(\lbar\gO(r))$ by 
$v^\ep(x)=(1-\ep)v_\ep(x)+\ep\psi(x)$  
and note that 
$v^\ep$ is a subsolution of 
\[
\bcases
H(x,Du)\leq -\ep\eta \quad\text{ in }
\ \gO(r),&\\ 
u\leq h \quad\text{ or }\quad H(x,Du)\leq-\ep\eta \quad\text{ on }\ \pl\gO(r).&
\ecases
\] 

It is clear that the domain $\gO(r)$ satisfies the uniform interior cone condition and, hence, 
we apply \cite{Is1989}*{Corollary 2.2 \& Remark 2.4} to $v^\ep$ and $w_\ep$,
to conclude that $v^\ep\leq w_\ep$ in $\lbar\gO(r)$, from which, after  sending 
$\ep\to 0$, we get $\,v\leq w\,$ on $\,\lbar\gO$. 
\epr

\begin{bibdiv}
\begin{biblist}
\bib{BCD1997}{book}{
   author={Bardi, Martino},
   author={Capuzzo-Dolcetta, Italo},
   title={Optimal control and viscosity solutions of Hamilton-Jacobi-Bellman
   equations},
   series={Systems \& Control: Foundations \& Applications},
   note={With appendices by Maurizio Falcone and Pierpaolo Soravia},
   publisher={Birkh\"auser Boston, Inc., Boston, MA},
   date={1997},
   pages={xviii+570},
   isbn={0-8176-3640-4},
   review={\MR{1484411 (99e:49001)}},
   doi={10.1007/978-0-8176-4755-1},
}
\bib{Ba1994}{book}{
   author={Barles, Guy},
   title={Solutions de viscosit\'e des \'equations de Hamilton-Jacobi},
   language={French, with French summary},
   series={Math\'ematiques \& Applications (Berlin) [Mathematics \&
   Applications]},
   volume={17},
   publisher={Springer-Verlag, Paris},
   date={1994},
   pages={x+194},
   isbn={3-540-58422-6},
   review={\MR{1613876 (2000b:49054)}},
}

\bib{CIL1992}{article}{
   author={Crandall, Michael G.},
   author={Ishii, Hitoshi},
   author={Lions, Pierre-Louis},
   title={User's guide to viscosity solutions of second order partial
   differential equations},
   journal={Bull. Amer. Math. Soc. (N.S.)},
   volume={27},
   date={1992},
   number={1},
   pages={1--67},
   issn={0273-0979},
   review={\MR{1118699 (92j:35050)}},
   doi={10.1090/S0273-0979-1992-00266-5},
}

\bib{CL1983}{article}{
   author={Crandall, Michael G.},
   author={Lions, Pierre-Louis},
   title={Viscosity solutions of Hamilton-Jacobi equations},
   journal={Trans. Amer. Math. Soc.},
   volume={277},
   date={1983},
   number={1},
   pages={1--42},
   issn={0002-9947},
   review={\MR{690039 (85g:35029)}},
   doi={10.2307/1999343},
}

\bib{FS2006}{book}{
   author={Fleming, Wendell H.},
   author={Soner, H. Mete},
   title={Controlled Markov processes and viscosity solutions},
   series={Stochastic Modelling and Applied Probability},
   volume={25},
   edition={2},
   publisher={Springer, New York},
   date={2006},
   pages={xviii+429},
   isbn={978-0387-260457},
   isbn={0-387-26045-5},
   review={\MR{2179357 (2006e:93002)}},
}

\bib{FK2010}{article}{
   author={Freidlin, M.},
   author={Koralov, L.},
   title={Nonlinear stochastic perturbations of dynamical systems and
   quasi-linear parabolic PDE's with a small parameter},
   journal={Probab. Theory Related Fields},
   volume={147},
   date={2010},
   number={1-2},
   pages={273--301},
   issn={0178-8051},
   review={\MR{2594354 (2011c:60085)}},
   doi={10.1007/s00440-009-0208-8},
}

\bib{FK2012b}{article}{ 
   author={Freidlin, M.},
   author={Koralov, L.},
   title={Metastability for nonlinear random perturbations of dynamical
   systems},
   journal={arXiv:0903.0430v2},
   volume={},
   date={2012},
   number={},
   pages={1--23},
   issn={},
   review={},
   doi={},
   }

\bib{FK2012a}{article}{ 
   author={Freidlin, M.},
   author={Koralov, L.},
   title={Nonlinear stochastic perturbations of dynamical systems and
  quasi-linear parabolic {P}{D}{E}fs with a small parameter},
   journal={ArXiv:0903.0428v2},
   volume={},
   date={2012},
   number={},
   pages={1--29},
   issn={},
   review={},
   doi={},
   }
\bib{FW2012}{book}{
   author={Freidlin, Mark I.},
   author={Wentzell, Alexander D.},
   title={Random perturbations of dynamical systems},
   series={Grundlehren der Mathematischen Wissenschaften [Fundamental
   Principles of Mathematical Sciences]},
   volume={260},
   edition={3},
   note={Translated from the 1979 Russian original by Joseph Sz\"ucs},
   publisher={Springer, Heidelberg},
   date={2012},
   pages={xxviii+458},
   isbn={978-3-642-25846-6},
   review={\MR{2953753}},
   doi={10.1007/978-3-642-25847-3},
}

\bib{Is1989}{article}{
   author={Ishii, Hitoshi},
   title={A boundary value problem of the Dirichlet type for Hamilton-Jacobi
   equations},
   journal={Ann. Scuola Norm. Sup. Pisa Cl. Sci. (4)},
   volume={16},
   date={1989},
   number={1},
   pages={105--135},
   issn={0391-173X},
   review={\MR{1056130 (91f:35071)}},
}

 \bib{IS2015}{article}{
   author={Ishii, Hitoshi},
   author={Souganidis, Panagiotis E.},
   title={Metastability for parabolic equations with drift: Part 1},
   journal={Indiana Univ. Math. J.},
   volume={64},
   year={2015},
   number={3},
   pages={875--913},
   issn={0022-2518},
   review={},
   doi={10.1512/iumj.2015.64.5559}
}
 
\bib{Kr1987}{book}{
   author={Krylov, N. V.},
   title={Nonlinear elliptic and parabolic equations of the second order},
   series={Mathematics and its Applications (Soviet Series)},
   volume={7},
   note={Translated from the Russian by P. L. Buzytsky [P. L.
   Buzytski\u\i]},
   publisher={D. Reidel Publishing Co., Dordrecht},
   date={1987},
   pages={xiv+462},
   isbn={90-277-2289-7},
   review={\MR{901759 (88d:35005)}},
   doi={10.1007/978-94-010-9557-0},
}
 \bib{Li1982}{book}{
   author={Lions, Pierre-Louis},
   title={Generalized solutions of Hamilton-Jacobi equations},
   series={Research Notes in Mathematics},
   volume={69},
   publisher={Pitman (Advanced Publishing Program), Boston, Mass.-London},
   date={1982},
   pages={iv+317},
   isbn={0-273-08556-5},
   review={\MR{667669 (84a:49038)}},
}  
\bib{So1986}{article}{
   author={Soner, Halil Mete},
   title={Optimal control with state-space constraint. I},
   journal={SIAM J. Control Optim.},
   volume={24},
   date={1986},
   number={3},
   pages={552--561},
   issn={0363-0129},
   review={\MR{838056 (87e:49029)}},
   doi={10.1137/0324032},
}

\end{biblist}
\end{bibdiv}

\bye